\newcommand{\R}{\mathbb{R}}
\newcommand{\C}{\mathbb{C}}
\newcommand{\F}{\mathbb{F}}
\newcommand{\m}{\mathfrak{m}}
\newcommand{\n}{\mathfrak{n}}
\newcommand{\h}{\mathfrak{h}}
\newcommand{\g}{\mathfrak{g}}
\renewcommand{\t}{\mathfrak{t}}
\renewcommand{\k}{\mathfrak{k}}
\newcommand{\T}{\Theta}
\newcommand{\ad}{\mathrm{ad}}
\newcolumntype{M}[1]{>{\centering\arraybackslash}m{#1}}
\newcolumntype{N}{@{}m{0pt}@{}}
\def\squarebox#1{\hbox to #1{\hfill\vbox to #1{\vfill}}}
\def\rm#1{\mathrm{#1}}
\def\cal#1{\mathcal{#1}}
\def\bb#1{\mathbb{#1}}
\def\lie#1{\mathfrak{#1}}
\newtheorem{teoremaA}{Theorem}
\newtheorem{teorema}{Theorem}[section]
\newtheorem{lema}[teorema]{Lemma}
\newtheorem{corolario}[teorema]{Corollary}
\newtheorem{proposicao}[teorema]{Proposition}
\newtheorem{defi}[teorema]{Definition}
\theoremstyle{definition}
\newtheorem{exemplo}[teorema]{Example}
\newtheorem{remark}[teorema]{Remark}
\newcommand{\rank}{\mathrm{rank}}
\newcommand{\Ric}{\mathrm{Ric}}
\newcommand{\mrk}{\mathrm{mrk}}
\newcommand{\vol}{\mathrm{vol}}
\newcommand{\tr}{\mathrm{tr}}
\newcommand{\Ad}{\mathrm{Ad}}
\newcommandx{\duvida}[2][1=]{\todo[linecolor=red,backgroundcolor=red!25,bordercolor=red,#1]{#2}}
\newcommandx{\comentario}[2][1=]{\todo[linecolor=blue,backgroundcolor=blue!25,bordercolor=blue,#1]{#2}}
\newcommandx{\info}[2][1=]{\todo[linecolor=OliveGreen,backgroundcolor=OliveGreen!25,bordercolor=OliveGreen,#1]{#2}}
\newcommandx{\melhorar}[2][1=]{\todo[linecolor=Plum,backgroundcolor=Plum!25,bordercolor=Plum,#1]{#2}}
\newcommandx{\apagar}[2][1=]{\todo[disable,#1]{#2}}
\begin{document}

\title[The projected Ricci flow and 3-isotropy-summands flag manifolds]{
The projected homogeneous Ricci flow and three-isotropy-summands flag manifolds
%Global dynamics of the Ricci flow on flag manifolds with three isotropy summands
}
\author[L. Grama]{Lino Grama$^\dagger$}
\author[R. M. Martins]{Ricardo M. Martins$^\dagger$}
\address{$^\dagger$State University of Campinas -- IMECC, 
	Rua Sérgio Buarque de Holanda, 651
	13083-859, Campinas, SP, Brasil}
\email{lino@ime.unicamp.br, rmiranda@unicamp.br}
\author[M. Patr\~ao]{Mauro Patr\~{a}o$^*$}
\author[L. Seco]{Lucas Seco$^*$}
\address{$^*$University of Brasília -- Departamento de Matemática,	Asa Norte
	70910-900 - Brasilia, DF - Brasil}
\email{mpatrao@mat.unb.br, lseco@unb.br}
\author[L. D. Speran\c{c}a]{Llohann Speran\c{c}a$^\ddagger$}
\address{$^\ddagger$Universidade Federal de São Paulo -- Instituto de Ci\^encia e Tecnologia, Avenida Cesare Mansueto Giulio Lattes, 1201,  12247-014, S\~ao Jos\'e dos Campos, SP, Brazil}
\email{speranca@unifesp.br}
\maketitle

\begin{abstract}
The Ricci flow was introduced by Hamilton and gained its importance through the years. Of special importance is the limiting behavior of the flow and its symmetry properties. Taking this into account, we present a novel normalization for the homogeneous Ricci flow with natural compactness properties. In addition, we present a characterization for Gromov-Hausdorff limits of homogeneous spaces.

As a result, we present a detailed picture of the homogeneous Ricci flow for three-isotropy-summands flag manifolds: phase portraits, basins of attractions, conjugation classes and collapsing phenomena. Moreover, we achieve a full classification of the possible Gromov-Hausdorff limits of the aforementioned lines of flow.
\end{abstract}

\bigskip

\noindent {\footnotesize\textit{AMS 2010 subject classification}: Primary: 53C44, 53C23; Secondary: 37C20, 14M15.}

\bigbreak

\noindent {\footnotesize \textit{Keywords:} Ricci flow, Flag manifolds, Invariant metrics, Gromov-Hausdorff convergence.}

\section{Introduction}

The Ricci flow was introduced by Hamilton \cite{hamilton} and gained its importance through the years. Of special importance is the limiting behavior of the flow and its symmetry properties. Despite its many geometric properties, explicit examples of the flow are not common. On the other hand, homogenous manifolds, particularly flag manifolds, have been a common ground for explicit examples, including results related to the Ricci flow. 

Taking this into account, we present a novel normalization for the homogeneous Ricci flow with natural compactness properties. Our method consists in appropriately normalizing the flow to a simplex and time reparametrizing it to get polynomial equations, obtaining what we call the {\em projected Ricci flow}.  This arises from a natural generalization of the standard unit-volume reparametrization of the Ricci flow and can potentially be applied to the study of the dynamics of other geometric flows on homogeneous spaces.

Apart from the dynamical interest, the paper studies the global geometric behavior of the flow by classifying its limiting manifolds. To this aim, we classify all possible Gromov-Hausdorff limits of families of invariant metrics in a fixed homogeneous space, by proving that they only depend on the  limiting (possibly degenerate) metric, %and the Lie algebra generated by the kernel of the limiting metric,  % Theorem \ref{thm:collapse}), 
which improves a previous collapse result of \cite{buzano}.
%which allowed only for the collapse of the fiber.

As an application of these tools, we present a detailed picture of the homogeneous Ricci flow for three isotropy-summands flag manifolds: phase portraits, basins of attractions, conjugation classes and collapsing phenomena (results summarized below; phase portraits given in figures \ref{sing-su-n} and \ref{gen-ricci-flow-exep}).
%which describe all possible phase portraits for the related homogeneous spaces. 
We remark that we compute all the possible Gromov-Hausdorff limits of the aforementioned lines of flow. Our global results improve previous results on the dynamics of the Ricci flow for two and three-isotropy summands flag manifolds: \cite{anastassiou-chrysikos,lino-ricardo1} which provided only a partial picture for three isotropy summands, by computing the stability of equilibiria but not the transient neither the limiting dynamics. On the other hand \cite{buzano, lino-ricardo2} considered the transient and limiting dynamics for two isotropy summands which, by the methods of the present article, reduces to one dimensional dynamics, where there are much fewer possibilities for the collapses to occur. 

The methods we introduce point to further generalizations for flag manifolds with four or more summands \cite{Arvanitoyeorgos-Chrysikos-2009, Arvanitoyeorgos-Chrysikos-Sakane-2013} and potentially offers insight to the classification problem of invariant Einstein metrics on compact homogeneous spaces  \cite{bohm2004variational}.

%As a result, we gain a full description of the system at the cost of the classification of solutions (in contrast to \cite{buzano}, we can not determine whether a solution is ancient or not, neither its type of singularity).

Here we consider the homogeneous Ricci flow on (generalized) flag manifolds of a compact connected simple Lie group $G$, whose isotropy representation splits into three irreducible components (isotropy summands). A flag manifold of $G$ is determined by a choice of subset $\Theta$ of simple roots, the ones with three isotropy summands were classified by Kimura \cite{kimura} into two classes: \textit{Type II} and of \textit{Type I}, according to the possible highest heights of the  chosen roots in $\Theta$ (see Tables \ref{tab-typeI} and \ref{tab-exep} for the list of these spaces). It is known that one of the main differences between these two classes is the number of invariant Einstein metrics: each flag manifold in the first class admits exactly four invariant Einstein metrics (up to scale) while those in the second class admits exactly three. Flag manifolds in both classes admit exactly one Einstein-K\"ahler metric.  We start our analysis with Type II since it includes the classical families of $SU(n)$ and $SO(2\ell)$ flag manifolds while Type I consists of finitely many flag manifolds related to exceptional Lie groups.

Next we summarize the main results of the paper. 
We suggest the reader to check the corresponding sections for details.

%We start with a complete description of the phase portrait and basin of attraction of the {\em projected Ricci flow}. Depending on the initial metric $g_0$ the flows evolves to degenerated metrics producing some collapsing phenomena. We interpret this collapse in terms of convergence of metric spaces in the sense of Gromov-Haussdorff. 

\begin{teoremaA}[Section \ref{sec:typeI}]
	The dynamics of the projected Ricci flow 
	of flag manifolds $\mathbb{F}$ with three isotropy summands of {\em Type II}
	are topologically equivalent.
	Their phase portraits and basins of attraction are described in Figure
	\ref{fig-introducao} (left).
	%\ref{sing-su-n}.   \info{ foi trocado K\"ahler e non-K\"ahler}
	Moreover, the {\em Einstein non-K\"ahler} metric is a repeller and the three {\em K\"ahler-Einstein metrics} are hyperbolic saddles.
	
	In forward time an orbit converges to either an Einstein metric on the flag manifold or collapses the flag manifold to a point. In backward time an orbit converges to either an Einstein metric on the flag or collapses the flag manifold to a Riemannian symmetric space with a normal metric. The convergence is in the Gromov-Hausdorff sense.
	
%	A backward orbit converges to either an Einstein metric or collapses to a point, while 
%	a forward orbit converges to either an Einstein metric or collapses to a Riemannian symmetric space with a normal metric. The convergence is in the Gromov-Hausdorff sense.
%	
%	Starting with a non-Einstein metric, the backward limit collapses to a point while the forward limit 
%	$$
%	\lim_{t\to \infty}{(\mathbb{F},d_{g_t})}=(\mathbb{F}_\infty,d)
%	$$
%	collapses to a Riemannian symmetric space with a normal metric. The convergence is in the Gromov-Hausdorff sense.
\end{teoremaA}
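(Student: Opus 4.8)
The plan is to reduce the homogeneous Ricci flow on $\mathbb F$ to a polynomial vector field on a triangle, analyse it as a planar dynamical system, and then read off the geometry of the limits from the Gromov--Hausdorff characterization of limits of invariant metrics proved earlier in the paper. Invariant metrics on $\mathbb F$ form a positive cone, parametrized by the scaling factors $(x_1,x_2,x_3)$ of the three isotropy summands $\m_1,\m_2,\m_3$, and the Ricci flow reads $\dot x_i=-2x_ir_i(x)$, where $r_i$ is the standard rational function built from $d_i=\dim\m_i$ and, for Type II, the single nonzero triple structure constant $[123]$ (whose values for each space are listed in Table \ref{tab-typeI}). Projecting onto the simplex $\Sigma=\{x_1+x_2+x_3=1,\ x_i\ge 0\}$, which records the metric up to scale, and reparametrizing time by the common denominator $x_1x_2x_3$, one obtains a polynomial vector field $X$ on the closed triangle $\overline\Sigma$ whose $i$-th component is divisible by $x_i$; in particular each face $\{x_i=0\}$ is $X$-invariant, and on the open simplex $X$ is a positive time change of the projected Ricci flow. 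So one studies a polynomial flow on a compact disk.

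The interior zeros of $X$ are exactly the invariant Einstein metrics, which for Type II are four by Kimura's classification: one non-K\"ahler and three K\"ahler--Einstein. I would compute the $2\times2$ linearization of $X$ at each; at the non-K\"ahler point the trace and the determinant of the linearization come out positive, so it is a hyperbolic repeller, while at each K\"ahler--Einstein point the determinant comes out negative, so it is a hyperbolic saddle. On $\partial\overline\Sigma$, $X$ restricts to a one-dimensional polynomial flow whose zeros are the three vertices together with one equilibrium in the relative interior of each edge -- the invariant Einstein metric of the homogeneous quotient of $\mathbb F$ obtained by collapsing the corresponding summand $\m_i$ -- and these I would classify directly. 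Together with the interior data this produces a Morse--Smale configuration of equilibria on the closed disk.

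To pass from the local pictures to the global phase portrait I must exclude periodic orbits and polycycles. The cleanest route is to use that the projected Ricci flow is gradient-like -- up to a positive time change it is a gradient flow of the scale-invariant scalar-curvature functional on $\Sigma$ -- and that a positive time change preserves gradient-likeness; hence $X$ has no recurrence, and by the Poincar\'e--Bendixson theorem every orbit has a single equilibrium as its $\alpha$- and as its $\omega$-limit. Assembling the local data, subject to the Poincar\'e--Hopf index constraint for a flow tangent to the boundary of the disk, then determines the two separatrices of each of the three saddles, hence the phase portrait and the basins of attraction of Figure \ref{fig-introducao} (left). Finally, the resulting flow is Morse--Smale with the same combinatorics for every Type II space, and its equilibria depend continuously on the parameters $(d_1,d_2,d_3,[123])$; after checking that no saddle-node, Hopf, or saddle-connection bifurcation is crossed within the Type II parameter range, Peixoto's structural-stability theorem for flows on compact surfaces with invariant boundary gives a topological equivalence between any two of them.

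It remains to identify the limits geometrically. An interior limit equilibrium is an Einstein metric on $\mathbb F$, after the standard rescaling that turns the projective representative into a genuine Einstein metric. For a limit on $\partial\overline\Sigma$ I would invoke the Gromov--Hausdorff characterization established earlier, which improves \cite{buzano}: the limit depends only on the degenerate limiting metric, so $\mathbb F$ collapses, in the Gromov--Hausdorff sense, either to a point -- at a vertex, where two summands vanish and, since $[\m_j,\m_k]\subset\m_i$ for Type II, the collapsed distribution is bracket-generating -- or to the Riemannian symmetric space $G/U_i$ obtained by quotienting out the collapsed, integrable distribution $\kt\oplus\m_i$ (a subalgebra for Type II), equipped with the normal metric -- at an edge-interior equilibrium, where a single summand vanishes. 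Matching these two possibilities with the direction of the flow on $\partial\overline\Sigma$ then yields the forward/backward dichotomy in the statement. The step I expect to be the main obstacle is precisely this last bridge: matching the compactified, time-reparametrized boundary dynamics of $X$ with the genuine Gromov--Hausdorff limits of the un-normalized flow, and in particular controlling the overall scale along orbits running into $\partial\overline\Sigma$ so as to separate collapses to a point from collapses to a lower-dimensional symmetric space, and determining which boundary strata are realized as $\omega$- versus $\alpha$-limits.
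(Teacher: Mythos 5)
Your strategy is essentially the one the paper follows: normalize the flow to the simplex and reparametrize to a polynomial field tangent to the boundary (Theorem \ref{thm-rescaling}, Corollary \ref{corol-rescaling}, Propositions \ref{propos-estratific} and \ref{prop:simplex-projection}), classify the equilibria by their linearizations (Theorems \ref{teo-sing-su-n} and \ref{teo-sing-so}), exclude recurrence via the scale-invariant Lyapunov function $L = S\cdot \vol^{2/d}$, invoke Peixoto's theorem for the topological equivalence, and identify the Gromov--Hausdorff limits at boundary equilibria from Theorem \ref{thm:collapse} together with the bracket relations $[\m_i,\m_j]=\m_k$. The local analysis, the dichotomy ``vertex $\Rightarrow$ point'' versus ``edge midpoint $\Rightarrow$ symmetric space with normal metric'', and the role of the collapsing theorem all match; your uniform treatment in the parameters $(d_1,d_2,d_3)$ replaces the paper's family-by-family symbolic computation but follows the same route.

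Two remarks. First, the step you treat most casually is the one that actually needs an extra ingredient: local data plus a Poincar\'e--Hopf index count does not determine where the saddle separatrices go (a priori a branch of $W^s(R)$ could emanate from the interior repeller $N$, from a vertex, or from another saddle), so the basins of Figure \ref{fig-introducao} are not yet pinned down. The paper supplies this by proving that the medial segments $KL$, $LM$, $MK$ are invariant; each contains exactly one saddle and joins two attractors, which locates the unstable separatrices exactly and, combined with the Lyapunov ordering and hyperbolicity, forces the stated configuration. You should add this (or an equivalent substitute) to your outline. Second, the obstacle you single out at the end --- controlling the overall scale to relate the compactified dynamics to the un-normalized flow --- does not arise: the statement concerns orbits of the projected flow itself, which converge in the simplex to a specific degenerate bilinear form, and Theorem \ref{thm:collapse} applies verbatim to that family. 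Be aware, finally, that the forward/backward phrasing in the introduction is swapped relative to the body of the paper (the edge midpoints $K,L,M$ are attractors, so the \emph{forward} collapse is to the symmetric space and the \emph{backward} collapse is to a point); your ``matching'' step should resolve this explicitly rather than leave the direction open.
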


\begin{figure}[ht]
\centering
\def\svgwidth{11cm}
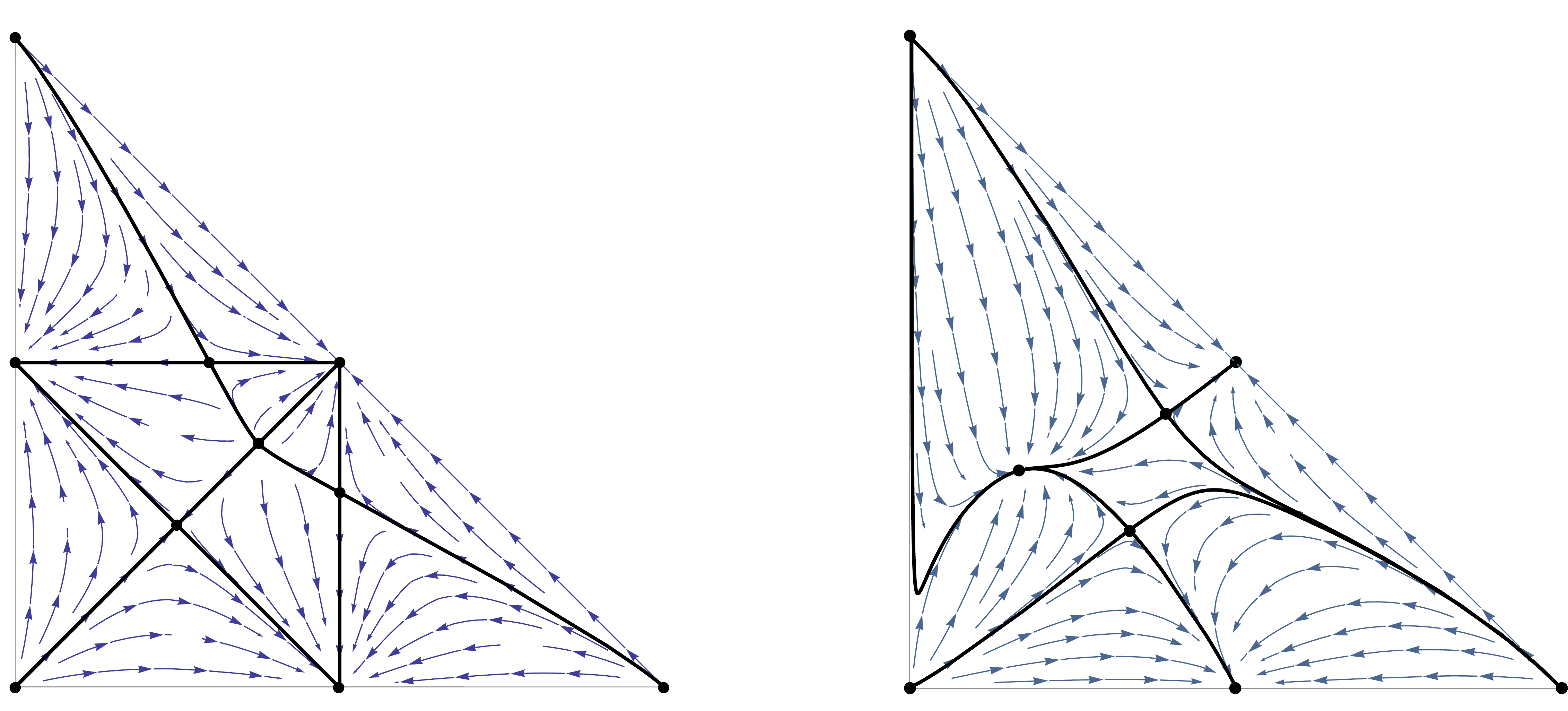
\caption{\label{fig-introducao}
Projected Ricci flow of Type II (left) and Type I (right).}
\end{figure}

\begin{teoremaA} [Section \ref{sec:typeII}] \label{teo2-intro}
	The dynamics of the projected Ricci flow 
	of flag manifolds $\mathbb{F}$ with three isotropy summands of {\em Type I}
	are topologically equivalent.
	Their phase portraits and basins of attraction are described in Figure \ref{fig-introducao} (right).
	%\ref{gen-ricci-flow-exep}.
	Moreover, the {\em Einstein-K\"ahler metric} is a \info{alteracao}attractor and the two {\em Einstein non-K\"ahler metrics} are hyperbolic saddles. 
	
	In forward time an orbit converges to either an Einstein metric on the flag manifold or collapses the flag manifold to a point. In backward time an orbit converges to either an Einstein metric on the flag manifold or collapses the flag manifold to either \comentario{Llohann} a Riemannian {\em symmetric space} or a {\em Borel-de Siebenthal homogeneous space}, both with normal metric. The convergence is in the Gromov-Hausdorff sense.
	
%%	If $g_0$ is non-Einstein and is not in a flow connecting two Einstein metrics, then its backward flow collapses to a point \comentario{Llohann} . On the other hand, the forward limit \info{alteracao} 
%%%Starting with a non-Einstein metric, the backward limit collapses \info{alteracao} to a point while the forward limit 
%	$$
%	\lim_{t\to \infty}{(\mathbb{F},d_{g_t})}=(\mathbb{F}_{\infty},d)
%	$$
%	collapses to either \comentario{Llohann} a Riemannian {\em symmetric space} {\bf or} a {\em Borel-de Siebenthal homogeneous space}, both with normal metric. The convergence is in the Gromov-Haussdorff sense.
\end{teoremaA}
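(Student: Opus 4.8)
The plan is to reduce Theorem~A to the qualitative study of an explicit planar polynomial vector field on the $2$-simplex and then to convert its dynamics into geometry through the Gromov--Hausdorff characterization established in the earlier sections. Write an invariant metric on $\mathbb{F}=G/K$ as $g=x_1\,(-B)|_{\mathfrak{m}_1}+x_2\,(-B)|_{\mathfrak{m}_2}+x_3\,(-B)|_{\mathfrak{m}_3}$ with $x_i>0$, where $B$ is the Killing form and $\mathfrak{m}=\mathfrak{m}_1\oplus\mathfrak{m}_2\oplus\mathfrak{m}_3$ is the isotropy decomposition, so that $\Ric(g)=\sum_i r_i\,(-B)|_{\mathfrak{m}_i}$ with $r_i=r_i(x)$ the usual rational functions built from $d_i=\dim\mathfrak{m}_i$ and the structure constants of $\mathbb{F}$ recorded in Table~\ref{tab-exep}. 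The projected Ricci flow of the preceding sections is then, after the normalization $x_1+x_2+x_3=1$ and the polynomial time reparametrization, a polynomial vector field $\dot x=P(x)$ on the closed triangle $\Delta=\{x_i\ge0,\ \sum_i x_i=1\}$ which is tangent to each edge, so that $\partial\Delta$ is invariant.

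First I would locate and classify the equilibria. In the interior, the fixed points of $P$ are exactly the projectivized invariant Einstein metrics ($r_i=cx_i$ for all $i$), so by Kimura's classification there are three for Type~I — one K\"ahler--Einstein and two non-K\"ahler--Einstein — and it remains to insert the finitely many structure-constant values of Table~\ref{tab-exep} into the Jacobian of $P$ and compute eigenvalues; I expect to find that the K\"ahler--Einstein point is a sink (both eigenvalues with negative real part) and each non-K\"ahler--Einstein point a hyperbolic saddle (real eigenvalues of opposite sign), which is a bounded, essentially mechanical calculation. On the boundary, each edge $\{x_k=0\}$ is invariant and carries the one-dimensional projected flow of the corresponding two-summand degeneration, whose fixed points are the two vertices of $\Delta$ on that edge and at most one edge-interior point, the transverse eigenvalue again being read off from the Ricci data. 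At a vertex $e_k$ the two directions $\mathfrak{m}_i\oplus\mathfrak{m}_j$ degenerate: whenever $\mathfrak{k}\oplus\mathfrak{m}_i\oplus\mathfrak{m}_j$ is a proper subalgebra $\mathfrak{k}'$ this displays the base $G/K'$ of a fibration $\mathbb{F}\to G/K'$, and since $\mathfrak{g}/\mathfrak{k}'\cong\mathfrak{m}_k$ is $K'$-irreducible, $G/K'$ is isotropy-irreducible — for Type~I this gives a Riemannian symmetric space at one such vertex and a Borel--de Siebenthal homogeneous space at the other — while at the third vertex $\mathfrak{k}$ together with $\mathfrak{m}_i$ and $\mathfrak{m}_j$ generates all of $\mathfrak{g}$.

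Next I would globalize. The projected Ricci flow is gradient-like: up to time reparametrization and the radial diffeomorphism of the hypersurface of unit-volume invariant metrics onto the open simplex, it is the gradient flow of the scalar curvature functional, so the pushed-forward scalar curvature is a strict Lyapunov function along non-constant orbits; hence there are no periodic orbits, and with all equilibria hyperbolic the flow on $\Delta$ is Morse--Smale, up to checking that there is no saddle connection — or that any connection present is the same for every Type~I space. Every orbit therefore has an equilibrium as its $\omega$-limit and as its $\alpha$-limit, and the phase portrait is determined by the equilibria together with their stable and unstable manifolds; reading this off produces Figure~\ref{fig-introducao} (right): the K\"ahler--Einstein sink attracts an open basin bounded by the stable separatrices of the two saddles, the vertex whose degenerating summands generate $\mathfrak{g}$ with $\mathfrak{k}$ is the only forward-attracting boundary point, and the two other vertices are repelling. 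Since the combinatorial type of this configuration — the number and hyperbolic type of the interior and boundary equilibria, the invariance of the edges, and the absence of further recurrence — is the same for every space in Table~\ref{tab-exep}, the resulting flows have isomorphic oriented phase diagrams and are topologically equivalent.

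Finally I would translate the dynamics into the stated convergence. By the Gromov--Hausdorff characterization proved earlier, the limit of a family of invariant metrics depends only on the limiting (possibly degenerate) metric, so the Gromov--Hausdorff behavior of an orbit is read from its limiting point $x^*\in\Delta$: if $x^*$ is interior, $(\mathbb{F},g_{x(t)})$ converges to the corresponding invariant Einstein metric; if $x^*$ is the vertex whose degenerating summands generate $\mathfrak{g}$ with $\mathfrak{k}$, the flag manifold collapses to a point; and if $x^*$ is one of the other two vertices, it collapses along $\mathbb{F}\to G/K'$ to $G/K'$ with its normal metric — automatic here since the surviving summand $\mathfrak{m}_k$ is a single isotropy component — which is a symmetric space or a Borel--de Siebenthal homogeneous space according to the vertex. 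Matching this with the forward/backward limit structure of the Morse--Smale picture (forward limits: the K\"ahler--Einstein sink, a non-K\"ahler--Einstein saddle, or the point-collapse vertex; backward limits: a non-K\"ahler--Einstein saddle, or a symmetric/Borel--de Siebenthal vertex) gives the last two sentences of the theorem. I expect the principal difficulty to be exactly this last identification — recognizing each boundary collapse as a point, a symmetric space, or a Borel--de Siebenthal space and confirming that the surviving metric is the normal one — together with the uniform check that the eigenvalue signs and the separatrix configuration persist across the whole Type~I list.
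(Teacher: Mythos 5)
Your overall strategy --- project the homogeneous Ricci flow to a polynomial field on the simplex, identify the interior equilibria with Kimura's three invariant Einstein metrics, exclude recurrence with the scalar-curvature Lyapunov function, deduce topological equivalence from the resulting Morse--Smale/Peixoto picture, and read off Gromov--Hausdorff limits from the limiting degenerate bilinear form via the collapsing theorem --- is exactly the paper's. The genuine gap is in your identification of the boundary equilibria with the collapsed spaces, and it is not cosmetic: it produces the wrong phase portrait. You place the symmetric and Borel--de Siebenthal collapses at \emph{vertices} of the simplex, where two summands $\mathfrak m_i\oplus\mathfrak m_j$ degenerate, on the grounds that $\k\oplus\mathfrak m_i\oplus\mathfrak m_j$ may be a proper subalgebra $\k'$ with limit $G/K'$. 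For Type I this never happens: using the brackets \eqref{colchete-exep} and the simplicity of $\g$, the paper shows that $\k\oplus\mathfrak m_i\oplus\mathfrak m_j$ is a subalgebra for no pair $i\neq j$ (it would be forced to be an ideal), so all three vertices are collapses to a point. The symmetric space of Table \ref{table-syme} and the Borel--de Siebenthal space of Table \ref{tab-BdS} occur instead at the \emph{midpoints of two edges}, where a single summand degenerates and $\h=\k\oplus\mathfrak m_2$ (a symmetric pair) or $\h=\k\oplus\mathfrak m_3$ (non-symmetric, certified by a dimension count against the Borel--de Siebenthal classification, which your ``isotropy-irreducible'' shortcut does not supply) is a genuine subalgebra. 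The third midpoint $(0,\tfrac12)$ is not an equilibrium of the Type I field at all --- consistently, $\k\oplus\mathfrak m_1$ is not a subalgebra because $[\mathfrak m_1,\mathfrak m_1]$ meets $\mathfrak m_2$ --- which is one of the qualitative differences from Type II that your uniform ``two repelling vertices, one attracting vertex, one point per edge'' template erases.

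Consequently your stability assignment is asserted rather than derived, and it disagrees with the computation: the Jacobian eigenvalues show that \emph{all three} vertices $O,P,Q$ are repellers and that the surviving edge midpoints $L,M$ are attractors alongside the K\"ahler--Einstein point $N$. Hence the forward boundary limits are the Borel--de Siebenthal space (at $L$) and the symmetric space (at $M$), while the backward limits are points (at $O,P,Q$) --- see table \eqref{table:GH2} and Figure \ref{gen-ricci-flow-exep}. Your proposal reverses this orientation to match the wording of the introductory statement, which is itself inconsistent with the paper's own Section \ref{sec:typeII} (there $\mathbb F_{\infty}$ is the symmetric or Borel--de Siebenthal space and $\mathbb F_{-\infty}$ is a point); either way, the direction of each collapse must come out of the eigenvalue computation and the subalgebra analysis, not be postulated from the desired conclusion. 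To repair the proof you need (i) the ideal/subalgebra argument above to locate the collapse strata correctly, (ii) the explicit eigenvalues at $O,P,Q,L,M,N,R,S$ computed from the Type I Ricci components, and (iii) the Borel--de Siebenthal dimension comparison showing $(\g,\k\oplus\mathfrak m_3)$ is not a symmetric pair.
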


It follows from Sesum \cite{sesum} and Petersen \cite{petersen} that the limit of any orbit in  the homogeneous Ricci flow is an Einstein manifold. Note that in this article this happens in three qualitatively different manners: convergence to an Einstein metric in the flag manifold, collapse to a Einstein metric on a homogeneous space with smaller but positive dimension, collapse to a point (which is trivially Einstein). 

Our general tools are a collapsing theorem (Section \ref{sec:gh}) and the projected Ricci flow (Section \ref{preliminaries}).  The main theorems above are then obtained {\em a posteriori} after exhausting the analysis for the families of flag manifolds with three isotropy summands of Type II and I (Sections \ref{sec:typeI} and \ref{sec:typeII}, respectively).  The symbolic and numerical calculations were carried out with the {\em Mathematica}\textsuperscript{TM} software package: software code used in the article is available upon request to the authors.

We start the paper recalling some preliminar results about Ricci flow, Gromov-Hausdorff convergence and flag manifolds.

\section*{Acknowledgments}

The authors would like to thank Pedro Solórzano for pointing out a first instance of Theorem \ref{thm:collapse}  involving holonomy, and  \info{incluido} also thank Jorge Lauret and Cynthia Will for their remarks and for pointing out a mistake in Theorem \ref{teo2-intro} in a previous version of this manuscript.  The last author is supported by  404266/2016-9 (CNPq); L.\ Grama is partially supported by  2018/13481-0 (FAPESP) and 305036/2019-0 (CNPq); R.\ M.\ Martins is partially supported by 434599/2018-2 (CNPq) and 2018/03338-6 (FAPESP); M. Patrão is partially supported by 88887.466805/2019-00 (CAPES-PRINT).

%The main results of this paper are:\\
%
%
%As corollaries of Theorems A, B, C we obtain the following results that are very interesting from the dynamical systems points of view:\\
%
%\noindent {\bf Theorem D1:} Given $F_1,F_2$ two flag manifolds with two isotropy summands, the Ricci flow of $F_1$ and $F_2$ are topologically equivalent dynamical systems.\\
%
%\noindent {\bf Theorem D2:} For every $m,n,p$ with $m\geq n\geq p>0$, the Ricci flow  equation of $SU(m+n+p)/S(U(m)\times U(n)\times U(p))$ is topologically equivalent to the Ricci flow equation of $SU(3)/T^2$.\\
%
%The proofs of Theorems D1 and D2 follows from Theorem 1 of \cite{peixoto} and from Theorems A, B and C that describes the type of singularities of the Ricci flows in invariant regions (a compact segment in Theorem D1 and a triangle in Theorem D2). We note that in Theorem D2, the boundary of invariant regions are limited by trajectories of the Ricci flow, so the proof may take into account Theorem 2 of \cite{peixoto2}.
%
%A modern approach to the problem of determining the topological equivalence of differential equations defined on a compact sets and with a finite number of singularities and without saddle connections as the Ricci flows of $SU(m+n+p)/S(U(m)\times U(n)\times U(p))$ and flag manifolds with two isotropy summands is the technique of Conley's isolating blocks (see for instance \cite{ketty}) but the fact that the boundary of the invariant region is made entirely by trajectories of the flow is a restriction to the application of this method.

\section{Preliminaries}
\subsection{The Ricci flow of invariant metrics}
Let $M$ be a manifold. Then a family of Riemannian metrics $g(t)$ in $M$ is called a Ricci flow if it satisfies
\begin{equation}
\label{ricci-flow}
\frac{\partial g}{ \partial t}=-2\Ric(g).
\end{equation}
where we omit the parameter $t$ on $g$, whenever this is convenient.
One gets essentially the same geometry when $g$ is rescaled  by a constant $\lambda > 0$. Moreover, 
%it follows from Koszul formula that the Levi-Civita connection remains unchanged.  Thus, 
$\Ric(\lambda g) = \Ric(g)$. It follows that the Ricci operator $r(g)$, given by
\begin{equation}
\label{eq-ricciop}
\Ric(g)(X, Y) = g( r(g)X, Y)
\end{equation}
is homogeneous of degree $-1$: $r(\lambda g) = \lambda^{-1} r(g)$, and so is the scalar curvature $S(g) = \rm{tr}(r(g))$. 

One can \textit{gauge away} the scale $\lambda$ by normalizing the flow. 
For instance, if $M$ is compact and orientable one can normalize the flow to preserve volume as follows. In oriented local coordinates $x_1, \ldots, x_n$ at $p \in M$, the Riemannian volume form of a metric $g$ is given by
$
\vol(g)_p = \sqrt{\det(g_{ij})} \, dx_1 \wedge \ldots \wedge dx_n
$,
where $g_{ij} = g_p(\partial/\partial x_i, \partial/\partial x_j)$. 
Let $g(t)$ be 1-parameter family of Riemannian metrics and denote by prime the derivative with respect to $t$, then
$
\vol'(g(t))_p = \frac{1}{2} \tr( g_{ij}' (t) ) \vol(g)_p
$,
since we have the differential $d(\det(h))_h H = \det(h) \tr H$, where $h$, $X$ are square matrices, $h$ invertible.  Denoting by $A(t)$ the operator corresponding to $g'(t)$ under the metric $g(t)$, that is, $g'(t)(X,Y) = g(t)( A(t)X, Y )$, and choosing local coordinates at $p$ such that the $\partial/\partial x_i$ form an orthonormal frame at $p$, we get that
\begin{equation}
\label{eq-derivada-volume-pontual}
\vol'(g(t))_p = \frac{1}{2} \tr A(t) \vol(g)_p
\end{equation}
If $g(t)$ comes from the Ricci flow \eqref{ricci-flow}, we thus get that 
\begin{equation}
\label{eq-derivada-ricci-pontual}
\vol'(g(t))_p = - S(g(t))_p  \vol(g(t))_p, \qquad \vol'(M,g(t)) = - T(g(t)) \vol(M,g) 
\end{equation}
where $\vol(M,g) = \int_M \vol(g)$ is the total volume and $T(g) = \int_M S(g) \vol(g) / \vol(M,g)$ the total scalar curvature of the metric $g$. 
Let $d = \dim M$, {\em Hamilton's Ricci flow} on the space of unit volume metrics
is (see \cite{ziller})
\begin{equation}
\label{hamilton-ricci-flow}
\frac{\partial g}{ \partial t}=-2\left( \Ric(g) - \frac{T(g)}{ d }  g \right)
\end{equation}
which is a slight modification of the {\em gradient Ricci flow} of $T(g)$, given by
\begin{equation}
\label{gradient-ricci-flow}
\frac{\partial g}{ \partial t}=-2\left( \Ric(g) - \frac{S(g)}{ d }  g \right)
\end{equation}
Both flows preserve metrics of unit volume, by \eqref{eq-derivada-volume-pontual}. Its equilibria are the metrics satisfying $\Ric(g) = \lambda g$, for some $\lambda \in \R$, the so called {\em Einstein metrics}.
%Furthermore  (see \cite{ziller}), it is the gradient flow of the functional $T(g)$ on the space of unit volume metrics so that, in particular, scalar curvature decreases along non-Einstein metrics of this flow.

Now assume that $M=G/K$ is a compact homogeneous space, 
%with connected isotropy subgroup $K$, 
a $G$-invariant metric $g$ on $M$ is determined by its value $g_b$ at the origin $b=K$, which is a $\Ad_G(K)$-invariant inner product.  Just like $g$, the  Ricci tensor $\Ric (g)$ and the scalar curvature $S(g)$ are also $G$-invariant and completely determined by their values at $b$, $\Ric(g)_b = \Ric(g_b)$, $S(g)_b=S(g_b)$.  
Being a $G$-invariant scalar function on $M=G/K$, the scalar curvature $S(g)$ is constant, so that $T(g) = S(g_b) \vol(g)$. 
Taking this into account, the Ricci flow equation (\ref{ricci-flow}) becomes the autonomous ordinary differential equation in the vector space of bilinear forms on the tangent space $T_b M$, known as the {\em homogeneous Ricci flow}
\begin{equation}
\label{inv-ricci-flow}
\frac{dg_b}{dt}=-2\Ric(g_b)
\end{equation}
Also, the corresponding Hamilton \eqref{hamilton-ricci-flow} and gradient \eqref{gradient-ricci-flow} Ricci flow on invariant metrics of unit volume coincide, becoming the {\em homogeneous gradient Ricci flow}
\begin{equation}
\label{normaliz-ricci-flow}
\frac{dg_b}{dt}=-2\left( \Ric(g_b) - \frac{S(g_b) }{ d } g_b \right)
\end{equation}
which is the gradient flow of the functional $T(g) = S(g_b)$ (see \cite{bohm2004variational}).

As in the unit-volume normalization \eqref{normaliz-ricci-flow}, one can normalize \eqref{inv-ricci-flow} by choosing an hypersurface in the (finite dimensional) space of invariant metrics which is transversal to the semi-lines $\lambda\mapsto \lambda g_b$. In the aforementioned case, the hypersurface consists on unit volume metrics and is unbounded. In order to study the limiting behavior of the homogeneous Ricci flow, in Section \ref{preliminaries} we will normalize it instead to a simplex and rescale it to get a polynomial vector field.

Next we introduce some notation and conventions.  Let the trivial coset $b=K$ be the basepoint of $G/K$, then the map $\g \to T_b(G/K)$ that assigns to $X \in \g$ the induced tangent vector $X \cdot b = d/dt (\exp(tX)b) |_{t=0}$ is surjective with kernel the isotropy subalgebra $\k$. Using that $g \in G$ acts in tangent vectors by its differential, we have that
\begin{equation}
\label{eq-induzido}
g ( X \cdot b ) = ( \Ad(g)X ) \cdot g b
\end{equation}
In what follows we assume that the homogeneous space $M=G/K$ is \textit{reductive}, with reductive decomposition $\mathfrak{g}=\mathfrak{k}\oplus\mathfrak{m}$ (that is, $[\mathfrak{k},\mathfrak{m}]\subset \mathfrak{m}$). Then $\m$ is $\Ad_G(K)$-invariant so that, by equation (\ref{eq-induzido}), the restriction $\m \to T_b(G/K)$ of the above map is a linear isomorphism that intertwines the isotropy representation of $K$ in $T_b(G/K)$ with the adjoint representation of $G$ restricted to $K$ in $\m$.  This allows us to identify $T_b(G/K) = \m$ and the $K$-isotropy representation with the $\Ad_G(K)$-representation.

We further assume that $G$ is a  compact connected simple Lie group and that the isotropy representation of $G/K$ decomposes $\mathfrak{m}$ as 
\begin{equation}\label{deco-iso}
\mathfrak{m}=\mathfrak{m}_1\oplus \ldots \oplus \mathfrak{m}_n
\end{equation}
where $\mathfrak{m}_1,...,\mathfrak{m}_n$ are  irreducible pairwise non-equivalent isotropy representations.
% and $n$ is the number of  irreducible isotropy summands
A source of examples satisfying the assumptions above are {\em generalized flag manifolds} (see Section \ref{sec:flag} for details). With the assumptions above,  all invariant metrics are given by
\begin{align}
\label{eq-compon-metr}
g_b&=x_1B_1+\ldots + x_nB_n
\end{align}
where $x_i>0$ and $B_i$ is the restriction of the (negative of the) Cartan-Killing form of $\mathfrak{g}$ to $\mathfrak{m}_i$. We also have 
\begin{align}
\label{eq-compon-ricci}
\Ric (g_b)&=y_1 B_1 + \ldots + y_nB_n
\end{align}
where 
$y_i$ is a function of $x_1, \ldots, x_n$.
Therefore, the Ricci flow (\ref{inv-ricci-flow}) becomes the autonomous system of ordinary differential equations
\begin{equation}
\label{eq-ricci-flow-coords}
\frac{dx_k}{dt}= -2 y_k, \qquad \ k=1,\ldots , n
\end{equation}
%Consider the \textit{Ricci vector field} $R: \R^n_+ \to \R^n$ given by
%\begin{equation}
%\label{eq-ricci-field}
%R(x) =  (y_1, \ldots, y_n ).
%\end{equation}
%which is homogeneous of degree $0$ in $x$. 
%Then equation (\ref{eq-ricci-flow-coords}) becomes
%$$
%\frac{d x }{dt}= -2 R(x)
%$$
%
%!!!!
Next, we write the Ricci flow equation in terms of the {Ricci operator} $r(g)_b$.  
Since $r(g)_b$ is invariant under the isotropy representation,
$r(g)_b|_{\mathfrak{m}_k}$ is a multiple $r_k$ of the identity.
%, with (see \cite{bohm2004variational,ziller})
%\begin{equation}
%\label{eq-ricci-components}
%r_k = \frac{b_k}{2x_k} - \frac{1}{4d_k}\sum_{i,j=1}^n
%{ k \brack i\, j}
%\frac{2x_i^2 - x_k^2}{x_i x_j x_k}
%\end{equation}
%where $b_k$, ${ k \brack i\, j}$ are non-negative structure constants of $G/H$. The latter is symmetric in all indexes $i,j,k$.  
From \eqref{eq-ricciop}, \eqref{eq-compon-metr} and  \eqref{eq-compon-ricci}, we get
$$
y_k = x_k r_k
$$
and equation \eqref{eq-ricci-flow-coords} becomes
\begin{equation}
\label{eq-ricci-flow-final}
\frac{dx_k}{dt}= -2 x_kr_k
\end{equation}
We denote by $R(x_1, \ldots, x_n)$ the vector field on the right hand side of \eqref{eq-ricci-flow-final}, with phase space $\mathbb{R}_+^n = \{ (x_1, \ldots, x_n) \in  \mathbb{R}: \, x_i > 0 \}$. Moreover, $x \in \mathbb{R}_+^n$ corresponds to an  Einstein  if and only if $R(x) = \lambda x$, for some $\lambda > 0$.
The homogeneous gradient Ricci flow (\ref{normaliz-ricci-flow}) on invariant metrics then becomes
\begin{equation}
\label{eq-ricci-flow-normaliz-final}
\frac{dx_k}{dt}= -2 x_kr_k - \frac{2}{d} S(x) x_k
\end{equation}
where 
\begin{equation}
\label{eq-scalar-curvature}
S(x) =  \sum_{i=1}^{n} d_i r_i
\end{equation}
is the scalar curvature, $d_k = \dim \m_k$.

\subsection{Flag manifolds}\label{sec:flag}

For the sake of completeness, we recall some results and notations about compact Lie groups and its flag manifolds (see \cite{Arvanitoyeorgos-Chrysikos-Sakane-2013, helgason, itoh} for details and proofs).  Let a compact connected Lie group $G$ have Lie algebra $\g$ and a maximal torus $T$ with Lie algebra $\t$.  We have that $\g$ is the compact real form of the complex reductive Lie algebra $\g_\C$.  The adjoint representation of the Cartan subalgebra $\h = \t_\C$ splits as the root space decomposition
$
\g_\C = \h \oplus \sum_{\alpha \in \Pi} \g_\alpha
$
with root space
$$
\g_\alpha = \{ X \in \g_\C :\, \ad(H) X =  \alpha(H) X, \, \forall H \in \h  \},
$$
where $\Pi \subset \h^*$ is the root system.  
%%
%We have that $\dim_\C \g_\alpha = 1$ and 
%that each root $\alpha$ is imaginary valued in $\t$ so that $\alpha \in \ii \t^*$.
%To each root $\alpha$ there corresponds the unique {\em coroot vector} $H^\vee_\alpha \in \h$ such that
%$$
%H^\vee_\alpha \in [\g_\alpha, \, \g_{-\alpha}]\quad\text{and}\quad 
%\alpha(H^\vee_\alpha) = 2
%$$
%Now, let $X \mapsto \ov{X}$ denote conjugation in $\g^\C$ w.r.t.\ $\g$, it is an automorphism of $\g$.  We have that $\ov{\h} = \h$ and, for a root $\alpha$, $\ov{\alpha(H)} = -\alpha( \ov{H} )$ for $H \in \h$, so that $\ov{\g_\alpha} = \g_{-\alpha}$.  
%For $X_{\alpha} \in \g_{\alpha}$, we have that both
%$$
%A_\alpha = \frac{1}{2}(X_\alpha - \ov{X_\alpha}) 
%\qquad
%S_\alpha = \frac{1}{2\ii}(X_\alpha + \ov{X_\alpha})
%$$
%belong to $\g$ and are such that $[H, A_\alpha] = \ii \alpha(H) S_\alpha$ and
%$[H, S_\alpha] = -\ii \alpha(H) A_\alpha$, for $H \in \t$.
%Consider the real root space 
%$$
%\m_\alpha = \mathrm{span}_\R \{ A_\alpha, S_{\alpha} \}
%$$
Consider
$$
\m_\alpha = \g \cap ( \g_\alpha \oplus \g_{-\alpha} )
$$
and let $\Pi^+$ be a choice of positive roots, then $\g$ splits as
$$
\g = \t \oplus \sum_{\alpha \in \Pi^+} \m_\alpha.
$$
Denote by $\Sigma$ the subset of simple roots corresponding to $\Pi^+$.

A flag manifold of $G$ is a homogeneous space $G/K$ where $K$ is the centralizer of a torus. We have that $K$ is connected and w.l.o.g.\ we may assume that $T \subset K$.  Recall that $T$ is the centralizer of $\t$. More generally, one can take $K = G_\T$, where the latter is the centralizer of
$$
\t_\Theta = \{ H \in \t:~ \alpha(H) = 0, \, \alpha \in \Theta \}
$$
and $\Theta$ is a subset of the simple roots $\Sigma$ which, in rough terms, furnishes the block structure of the isotropy $G_\T$. The Lie algebra $\k = \g_\T$ splits as
$$
\k = \t \oplus \sum_{\alpha \in \langle \T \rangle^+} \m_\alpha,
$$
where $\langle \T \rangle^+$ is the set of positive roots given by sums of roots in $\T$.  We denote
\begin{equation}
\label{eq-def-flag}
\F_\T = G/G_\T
\end{equation}
with basepoint $b=G_{\T}$.  Since the center $Z$ of $G$ is contained in $T$, $Z$ contained in $G_\T$. Taking the quotient of both $G$ and $G_\T$ by $Z$ in (\ref{eq-def-flag}), we obtain the same flag manifold. Note that $G/Z$ is isomorphic to the adjoint group of $\g$.  Thus, $\F_\T$ depends only on the Lie algebra $\g$ of $G$, which we can assume to be simple.

A $G_\T$-invariant isotropy complement of $\F_\T$ is given by
$$
\m \, = \sum_{\alpha \in \Pi^+ - \langle \T \rangle^+} \m_\alpha,
$$
so that $\F_\T$, with $\g = \k \oplus \m$, is reductive and the isotropy representation of $\F_\T$ is equivalent to the adjoint representation of $G_\T$ in $\m$. This representation is completely reducible and can be uniquely decomposed as the sum of non-equivalent irreducible representations
$$
\m = \m_1 \oplus \cdots \oplus \m_n,
$$
where each $\m_k$ is an appropriate sum of $\m_\alpha$'s (see \cite{siebenthal}).

\subsection{Flag manifolds with three isotropy summands}

According to \cite{kimura}, there exist two classes of flag manifolds with three isotropy summands, of \textit{Type II} and of \textit{Type I}, depending on the Dynkin mark of the roots in $\Pi^+\setminus \Theta ^+$.
%(usually denoted as black roots in the Dynkin diagram of $\mathfrak{g}_{\mathbb{C}}$).
%corresponding to the highest heights of the roots in $\Theta$. 
Recall that the Dynkin mark of a simple root $\alpha\in\Sigma$ is the coefficient $\mrk(\alpha)$ of $\alpha$, in the expression of the highest root as a combination of simple roots.
Let the decomposition into irreducible components of $\m$ be
$$
\mathfrak{m}=\mathfrak{m}_1 \oplus \mathfrak{m}_2 \oplus  \mathfrak{m}_3
$$
and recall that $d_i$ is real dimension of the corresponding isotropy component $\mathfrak{m}_i$, $i=1,2,3$.  

\begin{teorema}[\cite{kimura}]
	We have that
	\begin{enumerate}[i)]
		\item 
		The generalized flag manifold $G/G_\Theta$ has three isotropy summands if, and only if, the set $\Theta\subset \Sigma$ is given by
		$$
		\begin{array}{l | l}
		\mbox{Type } & \\
		\hline
		II & \Sigma\setminus \Theta = \{ \alpha, \, \beta:~ \mrk (\alpha) = \mrk (\beta) = 1  \} 
		\\  \hline
		I &  \Sigma\setminus \Theta = \{ \alpha: \mrk (\alpha) = 3  \}
		\end{array}
		$$
		
		\item The \textit{Type I} flag manifolds are listed in Table  \ref{tab-exep}. Each one admits exactly three invariant Einstein metrics (up to scale); exactly one of them is Einstein-K\"ahler.
		
		\item The \textit{Type II} flag manifolds are listed in Table  \ref{tab-typeI}.  Each one admits exactly four invariant Einstein metrics (up to scale); exactly one of them is Einstein-K\"ahler.		
	\end{enumerate}
\end{teorema}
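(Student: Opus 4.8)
This is due to Kimura \cite{kimura}; we outline the argument. For part (i), the plan is to use the fact that the irreducible summands of the isotropy representation of $\F_\T$ are indexed by the classes of roots in $\Pi^+\setminus\langle\T\rangle^+$ under $\gamma\sim\gamma'$ iff $\gamma-\gamma'\in\langle\T\rangle$ — this holds because $\g_\T$ is generated by $\t$ and the root spaces attached to $\langle\T\rangle$, so $[\g_\T,\m_\gamma]$ reaches exactly the $\m_{\gamma'}$ with $\gamma'\sim\gamma$. Writing $\gamma=\sum_{\sigma\in\Sigma}c_\sigma(\gamma)\sigma$, such a class is determined by the vector $v(\gamma)=(c_\sigma(\gamma))_{\sigma\in\Sigma\setminus\T}$, so the number of summands equals the number of distinct vectors $v(\gamma)$. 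One first notes that the $m=|\Sigma\setminus\T|$ simple roots in $\Sigma\setminus\T$ give the standard basis vectors and the highest root $\tilde\alpha$ gives $(\mrk(\sigma))_{\sigma\in\Sigma\setminus\T}$, which has all entries $\geq 1$; when $m\geq 3$ this is a new vector, forcing $\geq m+1\geq 4$ summands, so one is reduced to $m\leq 2$. For $m=1$ the classes are indexed by $c_\alpha(\gamma)$, which runs over the whole interval $\{1,\dots,\mrk(\alpha)\}$, giving three summands exactly when $\mrk(\alpha)=3$ (Type I). For $m=2$ the vectors $(1,0)$, $(0,1)$ and $(\mrk(\alpha),\mrk(\beta))$ already occur, and a short case-check on the marks (if some mark is $\geq 2$ a fourth vector appears) shows that one gets three summands exactly when $\mrk(\alpha)=\mrk(\beta)=1$ (Type II). The lists in Tables \ref{tab-exep} and \ref{tab-typeI} then come from running over the classification of simple Lie algebras and their Dynkin marks.

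For parts (ii) and (iii) the plan is to first record the Lie bracket relations among $\m_1,\m_2,\m_3$, read off from the vectors $v_i$: which of $v_i+v_j$ and $v_i-v_j$ is the vector of a root governs where $[\m_i,\m_j]\cap\m$ lands. For Type II ($v_1=(1,0),v_2=(0,1),v_3=(1,1)$) this gives $[\m_i,\m_i]\subset\k$ and the triangle $[\m_1,\m_2]\subset\m_3$, $[\m_2,\m_3]\subset\m_1$, $[\m_1,\m_3]\subset\m_2$, so the only nonzero structure constant is $[123]$; for Type I ($v_1=1,v_2=2,v_3=3$) one gets in addition $[\m_1,\m_1]\subset\k\oplus\m_2$, so $[112]\neq 0$ too — this asymmetry is what will produce the different Einstein counts. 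Next I would plug these structure constants into the standard formula for the Ricci operator eigenvalues of a reductive homogeneous space,
\begin{equation*}
r_k=\frac{1}{2x_k}+\frac{1}{4d_k}\sum_{i,j}[ijk]\left(\frac{x_k}{x_ix_j}-\frac{x_i}{x_jx_k}-\frac{x_j}{x_ix_k}\right),
\end{equation*}
so that each $r_k$ becomes an explicit rational function of $(x_1,x_2,x_3)$, homogeneous of degree $-1$. The Einstein condition $r_1=r_2=r_3$, cleared of denominators by $x_1x_2x_3$ and projectivized to $[x_1:x_2:x_3]$ (scaling is irrelevant by homogeneity), becomes a pair of homogeneous polynomial equations, and the claim is that it has exactly three solutions with all $x_i>0$ for Type I and four for Type II. Finally, to pin down the Kähler--Einstein one, I would use that an invariant metric on $\F_\T$ is Kähler (for an invariant complex structure) exactly when the $x_k$ are additive on classes, i.e.\ $x_i=x_j+x_\ell$ whenever $v_i=v_j+v_\ell$: for Type I this already forces $x_1:x_2:x_3=1:2:3$ (automatically Einstein), while for Type II it forces only $x_3=x_1+x_2$, and intersecting that line with the Einstein locus yields one point; either way one checks this is the unique Kähler metric among the Einstein ones.

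The hard part is the exact count of \emph{positive} solutions of the Einstein system: Bézout only bounds the number of complex projective solutions, so one has to discard the complex, the non-positive and the boundary solutions and rule out multiplicities. I expect this to be handled either by exploiting the symmetry of the equations or, as in \cite{kimura}, case by case over the finitely many admissible dimension triples $(d_1,d_2,d_3)$ and the corresponding values of $[123]$ (resp.\ $[112]$) taken from the tables, with the Kähler metric singled out among the solutions by the additivity relations above.
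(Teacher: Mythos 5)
The paper does not prove this theorem: it is imported verbatim from Kimura \cite{kimura} (with the decomposition result credited to \cite{siebenthal}), so there is no in-paper argument to compare yours against. That said, your outline is a faithful reconstruction of the standard proof. Part (i) is correct and complete in its essentials: the summands are indexed by the vectors of $\Sigma\setminus\T$-coefficients of the roots in $\Pi^+\setminus\langle\T\rangle^+$, the case $|\Sigma\setminus\T|\geq 3$ is excluded by counting the basis vectors together with the highest-root vector, the case $|\Sigma\setminus\T|=1$ uses that the coefficient of a fixed simple root runs over the full interval $\{1,\dots,\mrk(\alpha)\}$, and the case $|\Sigma\setminus\T|=2$ reduces to a short check on the marks. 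Your bracket relations for the two types agree with the ones the paper later records in \eqref{colchete-su} and \eqref{colchete-exep}, and the Kähler condition via additivity of the $x_k$ on the classes is the standard one (giving $x_1:x_2:x_3=1:2:3$ in Type I). You are also right, and appropriately candid, that the only genuinely laborious step is the exact count of positive Einstein solutions, which in \cite{kimura} is a case-by-case elimination over the admissible $(d_1,d_2,d_3)$ rather than a conceptual argument; nothing in the present paper supplies a shortcut for that, since the authors simply quote the count and verify it numerically for each family in Sections \ref{sec:typeI} and \ref{sec:typeII}.
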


\begin{table}[h!]
	\caption{ \label{tab-exep} Type I flag manifolds with three isotropy summands}
	\begin{center}
		\begin{tabular}{lllllll}
			Flag Manifold   & & $d_1$ & & $d_2$ & & $d_3$ \\ \hline \hline
			$E_8/E_6 \times SU(2) \times U(1)$ & & 108 & & 54 & & 4 \\ \hline
			$E_8/SU(8) \times U(1)$ && 112 && 56 && 16 \\ \hline
			$E_7/SU(5)\times SU(3) \times U(1)$  && 60 && 30 && 10 \\ \hline
			$E_7/SU(6)\times SU(2) \times U(1)$  && 60 && 30 && 4 \\ \hline
			$E_6/SU(3)\times SU(3) \times SU(2) \times U(1)$ && 36 && 18 && 4 \\ \hline
			$F_4/SU(3)\times SU(2) \times U(1) $ && 24 && 12 && 4 \\ \hline
			$G_2/U(2)$ && 4 && 2 && 4
		\end{tabular}
	\end{center}
\end{table}

\begin{table}[h!]
	\caption{ \label{tab-typeI} Type II flag manifolds with three isotropy summands}
	\begin{center}
		\begin{tabular}{lllllll}
			Flag Manifold   & & $d_1$ & & $d_2$ & & $d_3$ \\ \hline \hline
			$SU(m+n+p)/S(U(m)\times U(n) \times U(p))$ && $2mn$ &&$2mp$ && $2np$ \\ \hline
			$SO(2\ell)/U(1)\times U(\ell-1)$, $\ell \geq 4$ && $2(\ell-1)$ && $2(\ell -1)$ && $(\ell-1)(\ell-2)$ \\ \hline
			$E_6/SO(8)\times U(1)\times U(1)$ && 16 && 16 && 16
		\end{tabular}
	\end{center}
\end{table}

\section{A collapsing result \label{sec:gh}}

Buzano \cite{buzano} proves that a sequence of $G$-invariant Riemannian submersion $G/K\to G/H$ with shrinking fibers converges in the Gromov-Hasudorff sense to $G/H$.  In this article we generalize it in two important aspects, which are the main difficulties of our proof:  the shrinking directions need not define  a integrable distribution, therefore one must resort to Chow-Rascheviski Theorem to correctly identify the shrinking sets; furthermore, although the shrinking sets can be seen as the fibers of a submersion $G/K\to G/H$, the quotient map is not usually compatible with a Riemannian structure, forcing us into length-spaces techniques (which allows us to non-continuously divide  curves into  little pieces), naturally leading us to a Finsler metric. Indeed, considering the endeavour in metric geometry, our method and results takes a direction closer to that in Solórzano \cite{solorzano}. Our generalization is essential for the results of this article since in many of the collapses that appear here, the shrinking direction is non-integrable.

Let us start by quickly recalling  a definition of \textit{Gromov-Hausdorff distance} and its induced topology (see \cite{burago,gromov} for details).  A {\em correspondence} between the metric spaces $(A,d_A)$ and $(B,d_B)$ is a subset  $S\subseteq A \times B$ such that both projections  $S\to A$ and $S\to B$ are onto. If, in addition, $|d_A(p_1,q_1)-d_B(p_2,q_2)|<\epsilon$, for every $(p_1,p_2)$, $(q_1,q_2) \in S$, then we denote $A\sim_\epsilon B$.

\begin{defi}
	The Gromov--Hausdorff distance between $(A,d_A)$ and $(B,d_B)$ is defined by
	$$
	d_{GH}(A,B)=\inf \{ \epsilon \geq 0 ~:~ A\sim_\epsilon B \}.
	$$
	If there is no $\epsilon$ such that $A\sim_\epsilon B$, we write $d_{GH}(A,B)=\infty$. 	
	
	We say that a family of metric spaces $\{(X_t,d_t)  \}_t$, $t \in \R$, converges to $(X,d)$ in the Gromov-Hausdorff sense and write 
	$$
	\lim_{t \to \infty} (X_t,d_t) = (X,d)
	$$
	when $d_{GH}(X_t,X) \to 0$ as $t \to \infty$. 
\end{defi}

Let us fix some choices and notations.
Fix in $\g$ a $G$-invariant inner product $B$ and, given a subspace $\lie v \subseteq \g$, denote by $\lie v^\perp$ its $B$-orthogonal complement in $\g$. Identify $T_b(G/K)$ with $\lie m=\lie k^\perp$, which is $K$-invariant.
A $G$-invariant bilinear form $\beta$  in $G/K$ is defined by is value at the basepoint $b$, which is a $K$-invariant inner product on $\m$ that we will also denote by $\beta$, and vice-versa. Thus we can speak of convergence of $G$-invariant bilinear forms of $G/K$ by using the natural topology of bilinear forms on $\m$. If $\beta$ is a $G$-invariant Riemannian metric on $G/K$ or, equivalently a $K$-invariant inner product on $\m$, we denote its induced curve length by
\begin{equation}
\label{comprim-riemann}
\ell_\beta(c)=\int_0^1 | \dot c (t) |_\beta\, dt,
\end{equation}
where $ | \cdot |_\beta$ denotes the norm associated to $\beta$, and the corresponding Riemannian distance on $G/K$ by
$$
d_\beta(pK, qK) = \inf \{ \ell_\beta(c) :~ c \in C^1([0,1],G/K),~
c(0) = pK,~ c(1) = qK \} .
$$
Denote the restriction $\beta|_{\m \times \m}$ by $\beta|_\m$.
With these notations we have 
$\ell_\beta = \ell_{\beta|_\m}$ and 
$d_\beta = d_{\beta|_\m}$.

Now suppose that $g_t$ is a family of $G$-invariant metrics of $G/K$, $t > 0$, which converges to the bilinear form $g$ when $t \to \infty$.  Then $g$ is determined by a non-negative $K$-invariant bilinear form on $\m$ which we also denote by $g$.
Consider
\begin{equation*}
\m_0=\ker g=\{X\in \mathfrak m~:~g(X,\mathfrak m)=0\}
\end{equation*}
which is $K$-invariant, since $g$ and $\m$ are.
Let
$$
\h =  \text{ Lie algebra generated by }\m_0 \oplus \k
$$
which is $K$-invariant, since $\m_0$ and $\k$ are. Take $H < G$ as the connected Lie subgroup with Lie algebra $\h$. Since $\m_0$ is $\k$-invariant, it follows that the subalgebra generated by $\m_0$ is $\k$-invariant so that 
%Since the decomposition $\mathfrak{g}=\mathfrak{k}\oplus\mathfrak{m}$ is reductive, it follows that 
$\h$ coincides with the sum of $\k$ with the Lie algebra generated by $\m_0$, which guarantees that the distribution induced by $\m_0$ is \textit{bracket generating} in $H/K$ (see section \ref{sec:GH1} for details). Suppose that $H$ is closed and identify $T_b(G/H)$ with
$$
\n = \h^\perp \subset \k^\perp = \m
$$
which is $K$-invariant. Note that $g|_\n$ is a $K$-invariant inner product. 
%One may think of the fibers of the natural projection $\pi:G/K\to G/H$ as the orbits of the vector fields in $\mathfrak m_0$ in $G/K$.  
%It follows that such orbits collapse. 
It follows that, as $t\to \infty$, the fibers of the natural projection $\pi:G/K\to G/H$ collapse. More precisely, we have the following:

\begin{teorema}\label{thm:collapse}
	Let $(G/K,g_t)$, $g$ and $H$ be as above. Then
	\begin{equation*}
	\lim_{t\to \infty}(G/K, d_{g_t})=(G/H,d_F),
	\end{equation*}
	where $d_F$ is the distance metric induced by the (not necessarily smooth) Finsler norm $F:T(G/H)\to \bb R$,
	$$
	F(X)=\inf\left\{|Y|_g~|~ Y\in T(G/K),~d\pi(Y)=X  \right\}.
	$$
	%It follows that:
	%\begin{corolario}\label{cor:Finsler}
	%	if $g$ is $\Ad_G(H)$-invariant, then $F$  is the norm of the  Riemannian metric $g|_{\tilde{\n}}$, where $\tilde \n$ is the $g$-orthogonal complement of $\m\cap \h$ in $\m$.
\end{teorema}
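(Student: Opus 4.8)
The plan is to establish the Gromov–Hausdorff convergence by exhibiting, for each large $t$, an explicit $\epsilon(t)$-correspondence between $(G/K,d_{g_t})$ and $(G/H,d_F)$ with $\epsilon(t)\to 0$. The natural candidate is the graph of the projection $\pi:G/K\to G/H$, i.e.\ $S_t=\{(pK,pH):p\in G\}$, which is trivially onto both factors. So everything reduces to the metric estimate
\[
\bigl|\,d_{g_t}(pK,qK)-d_F(pH,qH)\,\bigr|\ \xrightarrow[t\to\infty]{}\ 0
\]
uniformly in $p,q$. The upper bound on $d_F$ in terms of $d_{g_t}$ is the easy direction: any curve in $G/K$ projects to a curve in $G/H$, and by definition of $F$ as an infimum over horizontal lifts one gets $\ell_F(\pi\circ c)\le \ell_{g_t}(c)$ once $t$ is large enough that $g_t\ge g$ on $\n$ (which holds since $g_t\to g$ and $g|_\n$ is a fixed inner product), so $d_F(pH,qH)\le d_{g_t}(pK,qK)+o(1)$. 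The substance of the theorem is the reverse inequality: curves in $G/K$ cannot be much shorter than the $F$-distance of their endpoints' projections.

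For the reverse bound I would argue as follows. Fix a minimizing (or near-minimizing) $F$-geodesic $\gamma$ in $G/H$ from $pH$ to $qH$. Using that $\m_0$ is bracket-generating in $H/K$ (guaranteed above via the Chow–Rashevskii hypothesis), lift $\gamma$ to a curve in $G/K$ from $pK$ to some point $p'K$ with $p'H=qH$, whose $g_t$-length is close to $\ell_F(\gamma)$: the horizontal part of the lift is controlled by $F$ by construction, and the price paid in the $\m_0$-directions is $g_t$-small because $g_t|_{\m_0}\to 0$ — here one chops $\gamma$ into many short pieces, lifts each piece almost horizontally, and connects consecutive lifts by short Chow–Rashevskii paths tangent to $\m_0$, whose $g_t$-length tends to $0$ uniformly as $t\to\infty$ because their $B$-length is bounded (compactness of $G/K$) and $g_t\to 0$ on $\m_0$. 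Finally one closes up the gap between $p'K$ and $qK$, which lies in a single fiber of $\pi$, again by a short $\m_0$-path of vanishing $g_t$-length. Summing the pieces, $d_{g_t}(pK,qK)\le \ell_F(\gamma)+o(1)=d_F(pH,qH)+o(1)$, with the $o(1)$ uniform by homogeneity and compactness. Combining the two inequalities gives $S_t$ is an $\epsilon(t)$-correspondence with $\epsilon(t)\to 0$, hence $d_{GH}\bigl((G/K,d_{g_t}),(G/H,d_F)\bigr)\to 0$.

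The main obstacle, and the place where the two difficulties flagged in the text bite, is making the lifting step quantitative and uniform: since $\m_0$ need not be integrable, one cannot simply lift along leaves of a foliation, so one must invoke Chow–Rashevskii and, crucially, a uniform estimate of the form "any two points in a fiber of $\pi$ are joined by an $\m_0$-horizontal curve of $B$-length $\le C$" with $C$ independent of the fiber — this uses compactness of the fibers together with $G$-homogeneity, and is exactly what forces the length-space (Finsler) formulation rather than a Riemannian-submersion argument. A secondary technical point is the possible non-smoothness of $F$ and the fact that $\pi$ is not a Riemannian submersion for $g_t$; this is handled by working throughout with curve-length and infimum definitions of distance (as set up in \eqref{comprim-riemann}) and by allowing the subdivision of curves into arbitrarily small pieces, so that no global smoothness of the comparison map is needed. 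Once these uniform Chow–Rashevskii estimates are in place, assembling the $\epsilon(t)$-correspondence is routine.
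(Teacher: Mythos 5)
Your overall architecture is sound and the first half of it coincides with the paper's: the correspondence $S_t=\{(pK,pH)\}$ together with the Chow--Rashevskii bound $\mathrm{diam}_{g_t,\mathcal H_0}(pH)\le C_t\,\mathrm{diam}_{B,\mathcal H_0}(H)\to 0$ is exactly how Section \ref{sec:GH1} disposes of the fiber directions. Where you genuinely diverge is in identifying the limit: the paper does \emph{not} lift $F$-geodesics. It introduces the intermediate fiber-distance $\tilde d_{g_t}$ on $G/H$, proves uniform convergence $\tilde d_{g_t}\to\tilde d_g$ via Lipschitz bounds and Arzel\`a--Ascoli, deduces that $(G/H,\tilde d_g)$ is a length space whose geodesics are $d_{B|_\n}$-Lipschitz, and then identifies $\tilde d_g=d_F$ by a Rademacher/dilatation computation in which only \emph{arbitrary} $C^1$ lifts are needed for the upper bound. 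This is precisely engineered to avoid the step your plan leans on, namely constructing a near-optimal lift of an $F$-geodesic. That step is the crux of your hard direction and is not actually supplied: the minimizer in $F(X)=\min\{|\Ad(h)X+Z|_g: h\in H,\ Z\in\m\cap\h\}$ involves a choice of fiber point $h$ \emph{and} a component $Z\in\m\cap\h$ (not just $\m_0$), and neither varies continuously in $X$; you need at least a piecewise-constant (measurable) selection argument plus a modulus-of-continuity estimate making the subdivision fineness uniform over all pairs $(p,q)$. Without that, "lift each piece almost horizontally with $g$-length close to the $F$-length" is an assertion, not a construction.

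There is also a concrete error in your easy direction. The claim that $g_t\to g$ with $g|_\n$ nondegenerate forces $g_t\ge g$ on $\n$ for large $t$ is false (take $g_t|_\n=(1-\tfrac1t)g|_\n$), and in any case domination on $\n$ alone is not enough, since a $d_{g_t}$-minimizer has velocity components in $\m_0$ and in $\m\cap\h$ as well. What you actually have for free is $F(d\pi(\dot c))\le|\dot c|_g$ (the velocity $\dot c$ is itself a competitor in the infimum defining $F$), so the easy direction reduces to $\ell_g(c)\le(1+o(1))\,\ell_{g_t}(c)$ for $g_t$-near-minimizing curves. In the setting of this paper the metrics are diagonal in the decomposition $\m=\m_1\oplus\cdots\oplus\m_n$, so $g(X,X)$ sees only the component of $X$ in $\m\ominus\m_0$, where $g_t$ is uniformly nondegenerate, and the estimate follows; but this is exactly the point that needs to be said, and for non-diagonal families the cross terms between $\m_0$ and its complement can spoil the constant. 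In summary: your route is a legitimate and more constructive alternative to the paper's length-space argument, but the two places where it replaces the paper's machinery (the lift selection, and the $g$-versus-$g_t$ length comparison) are precisely the places where your write-up currently has gaps.
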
 
The norm $F$ can be interpreted as the shortest direction one could leave the coset $H/K$ to cosets in the direction of $X$. In this sense, it is reasonable to conceive $(G/H,d_F)$ as the limiting space, since the diameter of $H$-cosets goes to zero, so one can freely moves inside each coset and choose the point with the shortest exit. 
Note that, for $X\in T_{pH}(G/H)$, the infimum in $F(X)$ is computed among vectors along the whole fiber $pH$, not only in $T_{pK}(G/K)$.

Since $F$ is clearly $G$-invariant, it depends only on its value at $F|_\n$, given by the following Lie-algebraic  description
\begin{align}\label{eq:Finsler-Lie}
F:  \n&\to \mathbb R\\
\nonumber	X &\mapsto \min\{ |\Ad(h)X + Z|_{g}:~ h\in H,~ Z \in \m \cap \h \}.
\end{align}
To verify the equality between both definitions of $F$, since $F$ is $G$-invariant, it is sufficient to show that  $d\pi(Y)=X$ if and only if $Y=h( \Ad(h^{-1})X+Z )$ for some $h\in H$ and $Z\in \lie m\cap \lie h$. To this aim, first note that $\ker d\pi|_\m = \m \cap \h$, so that $d\pi|_\m$ is the projection of the direct sum $\m = \n \oplus (\m \cap \h)$ onto $\n$. Also note that $d\pi(Y)=X \in \n$, implies $Y\in  T_{hK}(G/K)=h \m$ for some $h\in H$. Thus we can decompose $h^{-1} Y = W + Z$, for $W \in \n$, $Z \in \h \cap \m$.  By the equivariance of $\pi$ (and since $H$ acts in $\n$ by the adjoint action), it follows that
$$
W = d\pi( h^{-1} Y ) = h^{-1} d\pi(Y) = \Ad(h^{-1}) X.
$$
Therefore,  $Y = h( \Ad(h^{-1}) X + Z)$.  Following along the same lines, given $h\in H$ and $Z\in \h\cap\m$, we have
\[d\pi(h (\Ad(h^{-1}) X + Z) ) = h d\pi( \Ad(h^{-1}) X + Z ) = h ( \Ad(h^{-1}) X ) = X.
\]
%Moreover, if $g$ is $\Ad_G(H)$-invariant, then there is a unique $X'\in \lie h\cap \m$ such that $X+X'\in \tilde{\n}$. From the $\Ad_G(H)$-invariance of $\tilde{\n}$, we get
%\[
%|\Ad(h)X + Y|_{g}^2=|\Ad(h)(X+X')+Y-\Ad(h)X'|_g^2=|X+X'|^2_g+|Y-\Ad(h)X'|^2_g
%\]
%(here we assume $g(\lie k,\lie k)=g(\lie k,\lie m)=0$ in order to compute the last norms.) Taking $Y=\Ad(h)X'$ yields $F(X)$ 
%
%To verify the equality between both definitions of $F$, we show that, whenever $d\pi(Y)=X$, there are $h\in H$ and $Z\in \lie m\cap \lie h$ such that $h^{-1}Y=\Ad(h)X+Z$. 
%
%Note that $\ker d\pi|_\m = \m \cap \h$, so that $d\pi|_\m$ is the projection of the direct sum $\m = \n \oplus (\m \cap \h)$ onto $\n$. On the other hand,   the equivariance of $\pi$ guarantees that $d\pi_{hK}(h\Ad(h^{-1})X)=X$ for every $h\in H$. Therefore, if $d\pi(Y)=X$, then $Y\in T_{hK}(G/K)$, for some $h\in H$, and $Y-h\Ad(h^{-1})X\in \ker d\pi$.  Writing $Y=hN$, we conclude that $N-\Ad(h)^{-1}X\in \ker d\pi|_{\lie m}=\h\cap\m$. The claim follows since $|Y|_g=|h^{-1}Y|_g$.
%
%Moreover, if $g$ is $\Ad_G(H)$-invariant, then there is a unique $X'\in \lie h\cap \m$ such that $X+X'\in \tilde{\n}$. From the $\Ad_G(H)$-invariance of $\tilde{\n}$, we get
%\[
%|\Ad(h)X + Y|_{g}^2=|\Ad(h)(X+X')+Y-\Ad(h)X'|_g^2=|X+X'|^2_g+|Y-\Ad(h)X'|^2_g
%\]
%(here we assume $g(\lie k,\lie k)=g(\lie k,\lie m)=0$ in order to compute the last norms.) Taking $Y=\Ad(h)X'$ yields $F(X)$ 
%
%The last statement of Theorem \ref{thm:collapse} follows directly from (\ref{eq:Finsler-Lie}). 
%The rest of the proof is postponed to the Appendix %\ref{sec:GH}

Equation (\ref{eq:Finsler-Lie}) immediately implies the following.

\begin{corolario}
Suppose further that   $g(\n,\m \cap \h)=0$ and that $g|_\n$ is $\Ad_G(H)$-invariant. Then the Finsler norm $F$ is induced by the Riemannian metric $g|_\n$.
\end{corolario}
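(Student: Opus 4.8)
The plan is to evaluate the Lie-algebraic formula \eqref{eq:Finsler-Lie} for $F$ directly and show that, under the two extra hypotheses, it reduces to $X \mapsto g(X,X)^{1/2}$, i.e.\ to the norm of $g|_\n$.

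First I would record the only structural fact needed beyond the setup: the subspace $\n = \h^\perp$ is $\Ad_G(H)$-invariant. This holds because each $\Ad(h)$, $h \in H$, preserves $\h$ and the fixed $G$-invariant inner product $B$, hence preserves the $B$-orthogonal complement $\n$. In particular $\Ad(h)X \in \n$ whenever $X \in \n$. Recall also, from the setup preceding Theorem \ref{thm:collapse}, that $g$ is a non-negative $K$-invariant bilinear form on $\m$ and that $g|_\n$ is an honest inner product.

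Now fix $X \in \n$ and take arbitrary $h \in H$, $Z \in \m \cap \h$. Expanding by bilinearity,
\begin{equation*}
|\Ad(h)X + Z|_g^2 = g(\Ad(h)X, \Ad(h)X) + 2\,g(\Ad(h)X, Z) + g(Z,Z).
\end{equation*}
Since $\Ad(h)X \in \n$, the hypothesis $g(\n, \m \cap \h) = 0$ annihilates the middle term; by the assumed $\Ad_G(H)$-invariance of $g|_\n$ the first term equals $g(X,X)$; and $g(Z,Z) \geq 0$ because $g$ is non-negative. Hence $|\Ad(h)X + Z|_g^2 \geq g(X,X)$ for all admissible $h$ and $Z$, with equality attained at $h = e$, $Z = 0$. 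Taking the minimum in \eqref{eq:Finsler-Lie} yields $F(X) = g(X,X)^{1/2} = |X|_{g|_\n}$; that is, $F$ is precisely the norm of the inner product $g|_\n$, and therefore $d_F = d_{g|_\n}$ is the Riemannian distance induced by $g|_\n$.

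This corollary carries no real obstacle: it is a one-line computation once \eqref{eq:Finsler-Lie} is in hand. The only point worth a moment's attention is that the vanishing hypothesis $g(\n, \m \cap \h) = 0$ becomes usable only after one knows the rotated vector $\Ad(h)X$ still lies in $\n$, which is exactly why one needs the $\Ad_G(H)$-invariance of the subspace $\n$ itself, and not merely of the restricted form $g|_\n$.
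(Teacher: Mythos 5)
Your proof is correct and follows exactly the route the paper intends: the paper simply asserts that the corollary "immediately" follows from the Lie-algebraic formula \eqref{eq:Finsler-Lie}, and your computation (orthogonality kills the cross term, $\Ad_G(H)$-invariance of $g|_\n$ fixes the first term, non-negativity of $g$ handles $g(Z,Z)$, minimum attained at $h=e$, $Z=0$) is precisely the omitted verification. Your remark that one must first check $\Ad(h)X\in\n$, via $\Ad(h)$ preserving both $\h$ and $B$, is the right point to make explicit.
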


%The proof of Theorem \ref{thm:collapse} is postponed to the Appendix %\ref{sec:GH}
%in order to keep the flow of the article.  
%For its proof we use sub-Riemannian techniques, in contrast to \cite[Proposition 2.6]{buzano}, which uses Riemannian submersions. The latter result is recovered when $\mathfrak m_0\oplus \mathfrak k$ is a subalgebra. 

For the proof of Theorem \ref{thm:collapse} we use sub-Riemannian techniques, in contrast to \cite[Proposition 2.6]{buzano}, which uses Riemannian submersions. 
The latter result is recovered when $\mathfrak m_0\oplus \mathfrak k$ is a subalgebra. 
%
%\appendix
%\section{Proof of Theorem \ref{thm:collapse}}\label{sec:GH}
%
%The proof is divided in two parts and follows the notation of Section \ref{sec:gh}.  
%
The proof is divided in two parts, \ref{sec:GH1} and \ref{sec:GH2}, and we now fix notation.  Consider the natural projection $\pi:G/K\to G/H$. 
For $p \in G$ we denote by $pH$ both the corresponding point in $G/H$ and the corresponding coset in $G/K$, and it will be clear from the context which one is considered.  Consider in $G/H$ the distance
$$
\tilde d_{g_t}(pH,qH)=d_{g_t}(pH,qH)
$$
given by the $g_t$-distance of the corresponding fibers in $G/K$, which is not necessarily induced by a Riemannian metric in $G/H$.  
Recall that $\n=\lie h^\perp$ is the $B$-orthogonal complement of $\lie h$ and note that $g_t|_\n$ does not necessarily induces an invariant Riemannian metric in $G/H$ since is not necessarily $\Ad_G(H)$-invariant.
We  first show that the families $(G/K,d_{g_t})$ and  $(G/H,\tilde d_{g_t})$ have the same limit (Corollary \ref{cor:collapse_part1})  and then characterize this limit  (Lemma \ref{lemma:collapse_part2}). 

%\subsection{Proof of the convergence}\label{sec:GH1}
\subsection{Proving that
$\lim_{t\to \infty} (G/K,d_{g_t}) =  \lim_{t\to \infty} (G/H,\tilde d_{g_t})$}
\label{sec:GH1}
%For the first part, it is sufficient to show that the family $(G/K,d_{g_t})$ %gets arbitrarily close to the family $(G/H,\tilde d_{g_t})$ as $t \to\infty$.
To this aim, we consider the simplest case of a correspondence:
$$
S_t = \{ (pK,  \, pH ):\,  p \in G \}  \subseteq (G/K,d_{g_t}) \times (G/H, \tilde d_{g_t}). 
$$ 
Clearly $S_t$ projects surjectively over both factors.  It is only left to prove that, given $\epsilon > 0$, there exists $T$ such that
\begin{equation}\label{eq:uniform.convergence}
| d_{g_t}( pK, qK ) -   \tilde d_{g_t}( pH, qH ) | = | d_{g_t}( pK, qK ) -   d_{g_t}( pH, qH) |
< \epsilon
\end{equation}
for all $p, q \in G$ and $t>T$.   Moreover, since $d_{g_t}(pK,qK)\geq d_{g_t}(pH,qH)$, it is sufficient to show that $d_{g_t}( pK, qK ) -   d_{g_t}( pH, qH )<\epsilon$. 

To estimate $d_{g_t}(pK,qK)$ we consider the concatenation $c=c_3c_2c_1$, where: $c_2$ is a minimizing curve connecting  the fibers $pH$ to $qH$, thus realizing the fiber distance $d_{g_t}(pH,qH)$, which exists since $H$ is closed in $G$, hence compact; $c_1$ is in the fiber $pH$ and connects $pK$ to $c_2(0)$; $c_3$ is in the fiber $qH$ and connects $c_2(1)$ to $qK$. We get
\begin{equation}\label{eq:estimative1}
d_{g_t}(pK,qK)\leq  d_{g_t}(pH,qH)+\ell_{g_t}(c_1)+\ell_{g_t}(c_3),
\end{equation}
where $\ell_{g_t}(c_j)$ stands for the length of $c_j$  in the metric $g_t$.
Equation \eqref{eq:uniform.convergence} follows from \eqref{eq:estimative1} once we prove that we can uniformly bound the lengths $\ell_{g_t}(c_1),\ell_{g_t}(c_3)$ of curves in the fibers
by some family of constants $C_t$, whose limit is zero. 
Since $g_t|_{\mathfrak m_0}\to 0$, the situation naturally leads us to sub-Riemannian geometry, through  Chow-Raschevskii Theorem, which we recall below.

Let $M$ be a compact connected smooth manifold and $\mathcal H\subseteq TM$ a {\em bracket generating distribution}, i.e., $TM$ is generated by vectors of the form $[X_1,[X_2,[...,$ $[X_{j-1},X_j]...]]]$, where the $X_i$ are local sections of $\mathcal H$.  
A \textit{horizontal curve} is a curve in $M$ which is tangent to $\mathcal H$.
If $\beta$ is a Riemannian metric for the distribution $\mathcal H$, we define the $\beta$-length of an horizontal curve $c$ by
\begin{equation}\label{eq:comprimento}
\ell_{\beta,\mathcal H}(c)=\int_0^1 | \dot c (t) |_\beta dt,
\end{equation}
where $| \cdot |_\beta$ is the norm associated to $\beta$.  Chow-Raschevskii Theorem guarantees that $\ell_{\beta,\mathcal H}$  indeed defines a metric

\begin{teorema}[\cite{agrachev}, Theorem 3.31]
	\label{thm:ChowRaschevskii}
	Let $M,\mathcal H, \beta$ be as above. Then 
	\begin{equation*}
	d_{\beta,\mathcal H}(p,q)=\inf\{\ell_{\beta,\mathcal H}(c)~:~\dot c\in \mathcal H,~c(0)=p,~c(1)=q \}
	\end{equation*}
	defines a metric on $M$. Moreover,
	\begin{enumerate}
		\item The topology induced by $d_{\beta,\mathcal H}$ is the topology of $M$,
		\item The $d_\beta$-diameter of $M$, $\rm{diam}_{\beta,\mathcal H}(M)$, is finite,
		\item Between every pair $p,q\in M$, there is a curve $c$, $\dot c\in \mathcal H$, with $\ell_{\beta,\mathcal H}(c)=d_{\beta,\mathcal H}(p,q)$.
	\end{enumerate}
%	In particular, if $\beta_t$ is a sequence of inner products on $\mathcal H$ uniformly converging to zero, then the $d_{\beta_t,\mathcal H}$-diameter of $M$ converges to zero.
\end{teorema}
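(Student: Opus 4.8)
Since this is the classical Chow--Raschevskii theorem together with its Hopf--Rinow complement, the plan is to recall the standard argument (details in \cite{agrachev}). First I would set up the \emph{orbit / accessibility} machinery: for $p\in M$ let $\mathcal A_p\subseteq M$ denote the set of endpoints of horizontal curves issuing from $p$. Composing flows $\phi^{t_1}_{X_1}\circ\cdots\circ\phi^{t_k}_{X_k}$ of local sections $X_i$ of $\mathcal H$ produces horizontal curves, and the bracket-generating hypothesis allows one to pick iterated brackets $X_{i_1},[X_{i_1},X_{i_2}],\dots$ whose values span $T_pM$; a rank / implicit-function argument then shows that a suitable such composition maps an open parameter set diffeomorphically onto an open neighbourhood of $p$ inside $\mathcal A_p$. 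Hence each $\mathcal A_p$ is open; as the $\mathcal A_p$ partition the connected $M$, we get $\mathcal A_p=M$ for all $p$, so $d_{\beta,\mathcal H}$ is finite-valued. Symmetry (reverse curves) and the triangle inequality (concatenate) are then immediate, and $d_{\beta,\mathcal H}(p,q)>0$ for $p\ne q$ follows by working in a chart where $\beta$ is comparable to a Euclidean metric, bounding the $\beta$-length of any horizontal curve from $p$ to $q$ below by a constant times $\mathrm{dist}(p,q)$ (or the distance from $p$ to the chart boundary).

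For (1), openness of $d_{\beta,\mathcal H}$-balls in the manifold topology is the refinement of ``$\mathcal A_p$ open'' that keeps track of curve lengths. The converse --- manifold-open sets are $d_{\beta,\mathcal H}$-open --- is the quantitative core, and here I would use \emph{commutator curves}: a loop $\phi^{-s}_{Y}\phi^{-s}_{X}\phi^{s}_{Y}\phi^{s}_{X}$ is horizontal, has $\beta$-length $O(s)$, and displaces the base point by $s^2[X,Y]+O(s^3)$; iterating $j$ times gives horizontal curves of length $O(s)$ moving by $\sim s^{j}$ along a length-$j$ bracket. Assembling a spanning family of such directions and invoking the implicit function theorem yields a parametrization of a neighbourhood of $p$ by short horizontal curves with a ball-box estimate $d_{\beta,\mathcal H}(p,q)\le C\,\mathrm{dist}(p,q)^{1/r}$, where $r$ is the step of $\mathcal H$ at $p$; this gives continuity of $\mathrm{id}\colon M\to (M,d_{\beta,\mathcal H})$ and hence that the two topologies coincide.

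Items (2) and (3) are then soft consequences. For (2): by (1) every point has a neighbourhood of finite $d_{\beta,\mathcal H}$-diameter; covering the compact $M$ by finitely many of these and chaining them (using connectedness) bounds $\mathrm{diam}_{\beta,\mathcal H}(M)$. For (3): given $p,q$, take a minimizing sequence of horizontal curves $c_n\colon[0,1]\to M$ reparametrized to constant speed $\le d_{\beta,\mathcal H}(p,q)+1$; these are uniformly Lipschitz for $d_{\beta,\mathcal H}$, hence --- since the topologies agree and $M$ is compact --- equicontinuous with values in a compact set, so Arzel\`a--Ascoli yields a uniformly convergent subsequence with limit a Lipschitz curve $c$; passing to the control description, the defining controls lie in a fixed ball and a weak-$*$ limit shows $c$ is horizontal, while lower semicontinuity of length gives $\ell_{\beta,\mathcal H}(c)\le\liminf_n \ell_{\beta,\mathcal H}(c_n)=d_{\beta,\mathcal H}(p,q)$, so $c$ is a minimizer.

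\textbf{Expected main obstacle.} The one genuinely delicate point is the quantitative Chow estimate underlying (1): making the commutator-curve construction rigorous, controlling uniformly the Taylor remainders of the iterated flows, and extracting from bracket-generation a full-rank parametrization of a neighbourhood of $p$ with the claimed length control. Once this (or even just some modulus of continuity, without the sharp exponent $1/r$) is available, everything else --- openness of accessibility, finiteness of the diameter, existence of minimizers --- follows by routine arguments.
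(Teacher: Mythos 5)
This statement is quoted in the paper as an external result (Theorem 3.31 of \cite{agrachev}) and the paper supplies no proof of its own, so there is nothing internal to compare against. Your sketch is the standard argument from that reference --- openness of accessibility sets via bracket generation and the implicit function theorem, the commutator-curve/ball-box estimate for the equivalence of topologies, compactness for the diameter bound, and Arzel\`a--Ascoli with weak-$*$ convergence of controls plus lower semicontinuity of length for the existence of minimizers --- and it is correct as an outline, with the genuinely delicate point (the quantitative Chow estimate) correctly identified.
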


In our context, $\mathfrak m_0$ defines  an invariant distribution $\mathcal H_0$ in $G/K$, given by $(\mathcal H_0)_{pK}=p\mathfrak m_0$. Which is, by the choice of $H$, bracket generating inside each fiber $pH$ of the projection $\pi:G/K \to G/H$
(see \cite[Lemma 3.32]{agrachev}). 
%	That is, the space generated by $[X_1,[X_2,[...,[X_{j-1},X_j]...]]]$ at $pK\in G/K$, $X_t\in \mathcal H$, is equal to $T_{pK}pH$. 
Thus, item 3 of Chow-Raschevskii's Theorem applied to the fiber $H$ of $\pi$ implies that 
$$
\rm{diam}_{g_t}(H) \leq \rm{diam}_{g_t,\mathcal H_0}(H).
$$
The $G$-invariance of $g_t$ gives $\rm{diam}_{g_t,\mathcal H_0}(pH)=\rm{diam}_{g_t,\mathcal H_0}(H)$. Since we can always choose  $\ell_{g_t}(c_1),\ell_{g_t}(c_3)\leq \rm{diam}_{g_t}(H)$ in \eqref{eq:estimative1}, to complete the first part of the proof of Theorem \ref{thm:collapse}, it is then sufficient to show the following

\begin{lema}\label{lem:SRdiameter}
	$\lim\limits_{t \to\infty} \rm{diam}_{g_t,\mathcal H_0}(H)=0$.
\end{lema}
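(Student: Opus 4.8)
The plan is to exploit the scaling behavior of the sub-Riemannian diameter under the conformal rescaling of the metric on $\mathcal H_0$. First I would observe that, since $G$ is compact, the fiber $H$ is a compact manifold on which $\mathfrak m_0$ induces, via left translations inside $H$, a left-invariant bracket-generating distribution $\mathcal H_0$ (this is exactly the content of the choice of $H$, cf.\ \cite[Lemma 3.32]{agrachev}). Fix the fixed background inner product $B$ and let $B_0 = B|_{\mathfrak m_0}$ be the associated invariant sub-Riemannian metric on $\mathcal H_0$. By Chow--Raschevskii (Theorem \ref{thm:ChowRaschevskii}, item 2), $\mathrm{diam}_{B_0,\mathcal H_0}(H) = D < \infty$ is a finite constant depending only on $B$ and $H$.

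Next I would track how the diameter depends on $t$. Write the family of invariant metrics on $\mathfrak m$ as $g_t = x_1(t) B_1 \oplus \cdots \oplus x_n(t) B_n$ in the notation of \eqref{eq-compon-metr}; since $\mathfrak m_0 = \ker g$ is a sum of some of the summands $\mathfrak m_i$ and $g_t \to g$ with $g|_{\mathfrak m_0} = 0$, we have $x_i(t) \to 0$ for every summand $\mathfrak m_i \subseteq \mathfrak m_0$. Set $\lambda(t) = \max\{ x_i(t) : \mathfrak m_i \subseteq \mathfrak m_0 \}$, so $\lambda(t) \to 0$. Then on $\mathcal H_0$ we have the pointwise comparison $g_t|_{\mathfrak m_0} \leq \lambda(t)\, \mu\, B_0$ where $\mu$ is a fixed constant comparing $B_0$ with $\bigoplus B_i|_{\mathfrak m_0}$ (all these are invariant inner products on the finite-dimensional space $\mathfrak m_0$, hence mutually bounded). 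Consequently every horizontal curve satisfies $\ell_{g_t,\mathcal H_0}(c) \leq \sqrt{\lambda(t)\mu}\; \ell_{B_0,\mathcal H_0}(c)$, and taking infima over horizontal curves joining any two points of $H$ gives
\[
\mathrm{diam}_{g_t,\mathcal H_0}(H) \;\leq\; \sqrt{\lambda(t)\mu}\;\, \mathrm{diam}_{B_0,\mathcal H_0}(H) \;=\; \sqrt{\lambda(t)\mu}\; D \;\xrightarrow[t\to\infty]{}\; 0 .
\]

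The key point making this work — and the step I expect to be the only genuinely delicate one — is that the comparison must be done between \emph{sub-Riemannian} lengths of \emph{horizontal} curves, not ordinary Riemannian lengths: a priori $g_t$ is large in the $\mathfrak n$ and $\mathfrak m \cap \mathfrak h$ directions, so it is essential that the curves realizing $\mathrm{diam}_{g_t,\mathcal H_0}(H)$ stay tangent to $\mathcal H_0$, where $g_t$ is uniformly small. This is guaranteed precisely by item 3 of Chow--Raschevskii applied to $(H,\mathcal H_0,B_0)$, which also supplies the finite reference diameter $D$; bracket-generation of $\mathcal H_0$ inside $H$ is what was built into the definition of $\mathfrak h$. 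The rest is the elementary conformal scaling estimate above, using only that any two invariant inner products on the finite-dimensional space $\mathfrak m_0$ are comparable. Once Lemma \ref{lem:SRdiameter} is established, combining it with \eqref{eq:estimative1} and the discussion preceding it completes the first part of the proof of Theorem \ref{thm:collapse}.
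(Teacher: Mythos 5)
Your proposal is correct and follows essentially the same route as the paper: bound $g_t|_{\mathfrak m_0}$ above by a vanishing multiple of the fixed background form $B|_{\mathfrak m_0}$, compare sub-Riemannian lengths of horizontal curves, and invoke item 2 of Chow--Raschevskii for the finiteness of the reference diameter $\mathrm{diam}_{B|_{\mathfrak m},\mathcal H_0}(H)$. The only cosmetic difference is that the paper obtains the vanishing constant $C_t$ directly from the convergence $g_t|_{\mathfrak m_0}\to 0$ in the finite-dimensional space of bilinear forms, without appealing to the component decomposition \eqref{eq-compon-metr}.
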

\begin{proof}
	Recall the fixed $G$-invariant inner product $B$ of $\g$.
	Since $g_t:\mathfrak m_0\times \mathfrak m_0\to \mathbb R$ is a sequence of inner products converging to zero, there is a sequence $C_t>0$, $C_t\to 0$, such that
	\begin{equation}
	\label{eq:SRdiameter}
	g_t(X,X)\leq C_t^2 B(X,X)
	\end{equation}
	for all $X\in \mathfrak m_0$. By considering the respective $G$-invariant metrics, equation (\ref{eq:SRdiameter}) also holds for all $X$ tangent to $\mathcal H_0$.	
	Thus, for a $\cal H_0$-horizontal curve $c$, we have that
	$\ell_{g_t}(c) \leq C_t \, \ell_{B|_\m}(c)$. Thus,
	$$
	\rm{diam}_{g_t,\mathcal H_0}(H) \leq C_t \, \rm{diam}_{B|_\m,\mathcal H_0} (H).
	$$
	The Lemma follows then from item 2 of Chow-Raschevskii's Theorem which implies that $\rm{diam}_{B|_\m,\mathcal H_0}(H)$ is finite.
\end{proof}

\begin{corolario}\label{cor:collapse_part1}
$\lim_{t\to \infty} (G/K,d_{g_t}) =  \lim_{t\to \infty} (G/H,\tilde d_{g_t})$
\end{corolario}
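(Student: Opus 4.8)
The plan is to derive Corollary \ref{cor:collapse_part1} as a formal consequence of the uniform estimate established just before it, using the correspondence $S_t = \{(pK, pH) : p \in G\}$. The key observation is that everything needed has essentially already been assembled: the correspondence $S_t$ projects onto both factors, and the defining inequality $|d_{g_t}(pK,qK) - \tilde d_{g_t}(pH,qH)| < \epsilon$ for all $p,q \in G$ and $t > T$ is exactly what makes $S_t$ an $\epsilon$-correspondence witnessing $(G/K, d_{g_t}) \sim_\epsilon (G/H, \tilde d_{g_t})$. So the proof is mostly a matter of invoking the definition of Gromov-Hausdorff distance and then appealing to the work done in the proof of Lemma \ref{lem:SRdiameter}.

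Concretely, I would proceed as follows. First, recall that $\tilde d_{g_t}$ is by definition the fiber distance, so the left- and right-hand sides of \eqref{eq:uniform.convergence} coincide. Second, combine the inequality $d_{g_t}(pK,qK) \geq d_{g_t}(pH,qH)$ (which holds because $\pi$ is distance non-increasing: any curve in $G/K$ from $pK$ to $qK$ projects to a curve in $G/H$ connecting the fibers $pH$ and $qH$ of the same or smaller $g_t$-length) with the estimate \eqref{eq:estimative1}, namely $d_{g_t}(pK,qK) \leq d_{g_t}(pH,qH) + \ell_{g_t}(c_1) + \ell_{g_t}(c_3)$, where $c_1, c_3$ lie in the fibers $pH$, $qH$ respectively. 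Third, bound $\ell_{g_t}(c_1), \ell_{g_t}(c_3) \leq \mathrm{diam}_{g_t}(H)$ using the $G$-invariance of $g_t$ (which makes all fibers isometric) together with item 3 of Chow-Raschevskii's Theorem, and then invoke Lemma \ref{lem:SRdiameter} to conclude $\mathrm{diam}_{g_t}(H) \leq \mathrm{diam}_{g_t, \mathcal H_0}(H) \to 0$. Setting $C_t := 2\,\mathrm{diam}_{g_t}(H)$, we get $|d_{g_t}(pK,qK) - \tilde d_{g_t}(pH,qH)| \leq C_t$ uniformly in $p, q$, with $C_t \to 0$.

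Finally, I would translate this into the Gromov-Hausdorff statement: given $\epsilon > 0$, choose $T$ so that $C_t < \epsilon$ for $t > T$; then $S_t$ witnesses $(G/K, d_{g_t}) \sim_\epsilon (G/H, \tilde d_{g_t})$, hence $d_{GH}\big((G/K, d_{g_t}), (G/H, \tilde d_{g_t})\big) \leq C_t \to 0$. By the triangle inequality for $d_{GH}$, the two families therefore have the same Gromov-Hausdorff limit whenever either one exists; in particular $\lim_{t\to\infty}(G/K, d_{g_t}) = \lim_{t\to\infty}(G/H, \tilde d_{g_t})$ as claimed.

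I do not expect a genuine obstacle here — this corollary is a bookkeeping statement packaging the preceding lemmas. The one point requiring a little care is the justification that $d_{g_t}(pK,qK) \geq d_{g_t}(pH,qH)$, i.e. that projecting a curve does not increase its length; this follows because $d\pi$ restricted to $\mathfrak m$ is an orthogonal projection onto $\mathfrak n \subseteq \mathfrak m$ with respect to $B$ and hence, more to the point, the identification $\tilde d_{g_t}(pH, qH) = \inf\{\ell_{g_t}(c) : c$ a curve in $G/K$ from the fiber $pH$ to the fiber $qH\}$ is an infimum over a strictly larger family of curves than those from $pK$ to $qK$. The other subtlety — that the limit is independent of which family one looks at — is immediate from the triangle inequality for $d_{GH}$, which also covers the case where neither limit is assumed to exist a priori (the statement should be read as: the limit of one exists iff the limit of the other does, and then they agree).
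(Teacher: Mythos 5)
Your proposal is correct and follows essentially the same route as the paper: the corollary is indeed just the packaging of the correspondence $S_t=\{(pK,pH):p\in G\}$, the concatenation estimate \eqref{eq:estimative1}, the bound $\ell_{g_t}(c_i)\leq \mathrm{diam}_{g_t}(H)\leq \mathrm{diam}_{g_t,\mathcal H_0}(H)$ from item 3 of Chow--Raschevskii, and Lemma \ref{lem:SRdiameter}. Your added justification of $d_{g_t}(pK,qK)\geq \tilde d_{g_t}(pH,qH)$ (infimum over a larger family of curves) is a detail the paper leaves implicit, but it does not change the argument.
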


\begin{remark}
The argument so far can be carried out in a much more general situation: instead of a coset foliation, one could consider the orbits $\cal O$ of a a family of vector fields (as in \cite{sussmann}), provided that the induced diameters $\rm{diam}_{\beta_t,\cal H}(\cal O)$ uniformly goes to zero. 

One interesting instance where it happens is the case of a family of Riemannian submersions $(M,g_t)\to (B,\bar g_t)$  with a shrinking base (compare Solórzano \cite[Theorem 3.8]{solorzano}). In particular, if the horizontal space is bracket generating (or, more generally, if the submersions has only one dual leaf) then the the total space $(M,\bar g_t)$ converges to a point.
\end{remark}

\subsection{Characterizing $\lim\limits_{t \to\infty} (G/H,\tilde d_{g_t})$}
\label{sec:GH2}
The remaining of the proof consists in studying the geodesics of the limit of $\tilde d_t$: we first prove that $\tilde d_{g_t}$ uniformly converges to an analogously defined $\tilde d_g$, then we observe that its geodesics are Lipschitz with respect to a normal homogeneous metric in $G/H$, concluding that they are $C^1$ outside a measure zero set. The proof is concluded by computing the dilatation of smooth curves.

Even though the limit $g$ is not a Riemannian metric in $G/K$, it still makes sense to speak of the $g$-length 
\[\ell_g(c)=\int_0^1|\dot c(\xi)|_gd\xi.\]
Define $\tilde d_g:G/H\times G/H\to \mathbb R$ accordingly
\begin{equation*}\label{eq:dg}
\tilde d_g(pH,qH)=\inf\left\{\ell_g(\tilde c):\tilde c \in C^1([0,1],G/K),~\pi\circ \tilde c=c,~
~ \tilde c(0)\in pH,~\tilde c(1)\in qH\right\}
\end{equation*}
For fixed $p,q \in G$, we clearly have that $\tilde d_{g_t}(pH,qH) \to \tilde d_g(pH,qH)$, so that $\tilde d_{g_t}$ pointwise converges to $\tilde d_g$. Next, we use Arzelà-Ascoli Theorem to show that this convergence is uniform. 

Consider the induced metric spaces 
$(G/K,d_{g_t})$, $(G/K,d_{B|_\m})$, $(G/H,d_{B|_\n})$, $(G/H,\tilde d_{g_t})$ and the natural map $\pi:G/K\to G/H$.
Since $g_t|_\m$ is convergent (therefore bounded) and $g|_\n$ is non-degenerate, there exist constants $c,C>0$ such that, for all $t>0$,
\begin{eqnarray*}
g_t(X,X) & \leq & C^2 B(X,X) \quad \hfill \forall X \in \mathfrak m
\\
\min\{g_t(\bar Y,\bar Y)\, :~\bar Y \in T(G/K),~d\pi(\bar Y)=Y\} & \geq & \,\,\,\, c^2 B(Y,Y) \quad  \forall Y\in  \n
\end{eqnarray*}
Let $\gamma:[0,1]\to G/K$ be a $d_{B|_\m}$-minimizing geodesic between $pK$ and $qK$. We have
$$
d_{B|_\m}(pK,qK) =\int_0^1 |\dot \gamma(\xi)|_{B|_\m}d\xi \geq C^{-1}\int_0^1 |\dot \gamma(\xi)|_{g_t}d\xi \geq C^{-1}d_{g_t}(pK,qK).
$$
On the other hand, let $\phi: [0,1]\to G/K$ be a $d_{g_t}$-minimizing geodesic between the cosets $pH$ and $qH$. Then, $\pi \circ \phi$ is a curve between $pH$ and $qH$ in $G/H$. Moreover
% Let $\overline \phi$ be its horizontal lift starting at $pK$, then (the left translation of) $\dot{\overline \phi} (t)$ is the $g_t$-orthogonal projection of $\dot \phi(t)$ onto $\n$ so that
$$
\tilde d_{g_t}(pH,qH) = \int_0^1 |\dot \phi(\xi)|_{g_t}d\xi 
\geq 
c \int_0^1 |d\pi(\dot{\phi}(\xi))|_{B|_\n}d\xi 
\geq
cd_{B|_\n}(pH, qH)
$$
We conclude

\begin{lema}\label{lem:Lipschitz}
	There are constants $c,C>0$ such that, for all $t > 0$ and $p,q  \in G$,
	\begin{equation*}
	c\, d_{B|_{\n}}(pH,qH)\leq \tilde d_{g_t}(pH,qH)\leq d_{g_t}(pK,qK)\leq 
	C \, d_{B|_{\m}}(pK,qK)
	\end{equation*}
	In particular 
	\begin{enumerate}[$(i)$]
		\item the sequences $\tilde d_{g_t},d_{g_t}$ are uniformly equicontinuous and uniformly bounded;
		\item If a map  $f: (X,d)\to (G/H,\tilde d_{g_t})$, from a metric space $(X,d)$, is Lipschitz, so it is $f:(X,d)\to (G/H,d_{B|_\n})$.
	\end{enumerate}
	In particular, $(G/H,\tilde d_{g_t})$  converges to $(G/H,\tilde d_g)$ in the Gromov-Hausdorff sense and $(G/H,\tilde d_g)$ is a length space.
\end{lema}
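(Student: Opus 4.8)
The chain of inequalities, the first assertion, I would take directly from the computation already carried out in the two paragraphs preceding the statement: the middle bound $\tilde d_{g_t}(pH,qH)\le d_{g_t}(pK,qK)$ is immediate, since $\tilde d_{g_t}(pH,qH)$ is the $g_t$-distance between the fibers $pH,qH\subseteq G/K$ of $\pi$, hence no larger than the $g_t$-distance of the representatives $pK\in pH$ and $qK\in qH$; and the outer bounds come from the two comparison estimates $g_t(X,X)\le C^2B(X,X)$ for $X\in\m$ and $\min\{g_t(\bar Y,\bar Y):\,d\pi(\bar Y)=Y\}\ge c^2B(Y,Y)$ for $Y\in\n$, integrated respectively along a $d_{B|_\m}$-minimizing curve in $G/K$ and along a $d_{g_t}$-minimizing curve between the two fibers. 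So it only remains to draw the stated consequences, which is essentially formal.

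For item $(i)$: uniform boundedness follows from $\tilde d_{g_t}\le d_{g_t}\le C\,d_{B|_\m}\le C\,\mathrm{diam}_{B|_\m}(G/K)<\infty$, the diameter being finite because $G/K$ is compact. For uniform equicontinuity, since the inequality $d_{g_t}(pK,qK)\le C\,d_{B|_\m}(pK,qK)$ holds for \emph{every} choice of representatives, taking the infimum over the two fibers yields $\tilde d_{g_t}(pH,qH)\le C\,\tilde d_{B|_\m}(pH,qH)=C\,d_{B|_\n}(pH,qH)$, the last equality because $B$ is bi-invariant, so $(G/K,B|_\m)\to(G/H,B|_\n)$ is a Riemannian submersion and the fiber distance it induces downstairs coincides with $d_{B|_\n}$ (lift curves horizontally). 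Hence $\{\tilde d_{g_t}\}_t$ is uniformly $C$-Lipschitz for the fixed metric $d_{B|_\n}$ on the compact space $G/H$, and likewise $\{d_{g_t}\}_t$ for $d_{B|_\m}$ on $G/K$, so both families are uniformly equicontinuous by the triangle inequality. Item $(ii)$ is immediate from the first link of the chain: an $L$-Lipschitz map $f:(X,d)\to(G/H,\tilde d_{g_t})$ satisfies $d_{B|_\n}(f(x),f(x'))\le c^{-1}\tilde d_{g_t}(f(x),f(x'))\le c^{-1}L\,d(x,x')$.

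For the last two claims I would first upgrade the pointwise convergence $\tilde d_{g_t}\to\tilde d_g$, already noted above, to uniform convergence: on the compact space $G/H\times G/H$ the functions $\tilde d_{g_t}$ are uniformly bounded and uniformly equicontinuous by $(i)$, so Arzelà-Ascoli gives, for every subsequence, a further subsequence converging uniformly, whose limit can only be the pointwise limit $\tilde d_g$; hence $\tilde d_{g_t}\to\tilde d_g$ uniformly. Uniform convergence of the distance functions on a fixed set yields Gromov-Hausdorff convergence at once: for $\epsilon>0$ and $t$ large the diagonal $S=\{(pH,pH):\,pH\in G/H\}$ is a correspondence realizing $(G/H,\tilde d_{g_t})\sim_\epsilon(G/H,\tilde d_g)$, so $d_{GH}\to0$. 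Finally, I would show $(G/H,\tilde d_g)$ is a length space by first showing each $(G/H,\tilde d_{g_t})$ is one: given $x=pH$, $y=qH$ there is a $g_t$-minimizing curve $\gamma:[0,L]\to G/K$, parametrized by arclength with $L=\tilde d_{g_t}(x,y)$, joining the fiber of $x$ to the fiber of $y$ — it exists, the fibers being compact — and $m:=\pi(\gamma(L/2))$ then satisfies $\tilde d_{g_t}(x,m),\tilde d_{g_t}(m,y)\le L/2$, so exact midpoints exist and $(G/H,\tilde d_{g_t})$, being compact, is complete; a short perturbation using the uniform convergence then produces $\epsilon$-midpoints for $\tilde d_g$, and since $c\,d_{B|_\n}\le\tilde d_g\le C\,d_{B|_\n}$ forces $(G/H,\tilde d_g)$ to be compact, hence complete, it is a length space (equivalently, one may invoke the stability of the length-space property under Gromov-Hausdorff limits).

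The one genuinely substantive point — the place where I expect to need real work rather than bookkeeping, and the only place the hypotheses truly bite — is the uniform transverse lower bound $\min\{g_t(\bar Y,\bar Y):\,d\pi(\bar Y)=Y\}\ge c^2B(Y,Y)$ on $\n$, i.e.\ that the $g_t$ do not degenerate in the directions complementary to the collapsing ones; this uses that $g|_\n$ is non-degenerate (equivalently $\n\cap\ker g=\{0\}$) together with $g_t|_\m\to g|_\m$. Everything afterwards is routine, the only mild care needed being to apply Arzelà-Ascoli on the compact product $G/H\times G/H$ and to identify the uniform limit via the already-established pointwise convergence.
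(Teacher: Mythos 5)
Your treatment of the displayed inequalities, of items $(i)$ and $(ii)$, and of the Gromov--Hausdorff convergence (Arzel\`a--Ascoli on the compact product plus the diagonal correspondence) matches the paper's argument. The gap is in the final step, where you claim each $(G/H,\tilde d_{g_t})$ is itself a length space by producing exact midpoints. Recall that $\tilde d_{g_t}(pH,qH)$ is the infimum of $d_{g_t}$ over pairs of points in the two fibers; for a metric that is only left-$G$-invariant this fiber distance is not guaranteed to satisfy the triangle inequality, because the fiber through $pK$ is an orbit of the conjugate $pHp^{-1}$ rather than of a single group acting by isometries, so the usual argument for quotients by isometric group actions does not apply. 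Your deduction ``$\tilde d_{g_t}(x,m),\tilde d_{g_t}(m,y)\le L/2$, hence $m$ is an exact midpoint'' uses exactly the missing inequality $\tilde d_{g_t}(x,m)+\tilde d_{g_t}(m,y)\ge \tilde d_{g_t}(x,y)$, and without it the midpoint iteration does not produce curves of controlled $\tilde d_{g_t}$-length. The paper explicitly flags this in the remark immediately following the lemma: the spaces $(G/H,\tilde d_{g_t})$ ``might not be length spaces''; only the limit is, and for a reason your argument never uses --- since $g$ vanishes on $\m_0$ and $\m_0$ is bracket generating inside each fiber, Chow--Rashevskii joins any two points of a fiber by a $g$-null horizontal curve, so $\tilde d_g$ agrees with the quotient semi-metric and in particular does satisfy the triangle inequality.

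There are two ways to repair this. The paper's route: combine Corollary \ref{cor:collapse_part1} with the uniform convergence to see that $(G/H,\tilde d_g)$ is the Gromov--Hausdorff limit of the genuine Riemannian length spaces $(G/K,d_{g_t})$, and invoke stability of the length-space property under such limits (\cite[Theorem 7.5.1]{burago}); note that your parenthetical ``equivalently, invoke stability under GH limits'' is only valid if applied to this sequence, not to $(G/H,\tilde d_{g_t})$. Alternatively, run your $\epsilon$-midpoint argument directly on $\tilde d_g$: take a curve $\tilde c$ in $G/K$ from the fiber $pH$ to the fiber $qH$ with $\ell_g(\tilde c)\le \tilde d_g(pH,qH)+\epsilon$, cut it at half its $g$-length, and project; completeness together with the triangle inequality for $\tilde d_g$ (justified via the null fiber curves above) then gives the length-space property. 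Everything else in your write-up, including your identification of the uniform transverse lower bound on $\n$ as the one substantive analytic input, is consistent with the paper.
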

%\begin{proof}
\begin{proof}
Observe that, as family of functions, $d_{g_t}:(G/K\times G/K,d_{B|_\m}\times d_{B|_\m})\to \bb R$ and $\tilde d_{g_t}:(G/H\times G/H,d_{B|_\n}\times d_{B|_\n})\to \bb R$ are Lipschitz. The first one since
 \begin{multline*}
 d_{g_t}(pK,qK)-d_{g_t}(p'K,q'K)\\=d_{g_t}(pK,qK)-d_{g_t}(pK,q'K)+d_{g_t}(pK,q'K)-d_{g_t}(p'K,q'K)\\\leq d_{g_t}(pK,p'K)+d_{g_t}(qK,q'K)\leq C(d_{B|_\lie m}(pK,p'K)+d_{B|_\lie m}(qK,q'K) );
 \end{multline*}
the last one since $d_{B|_\m}(pH,qH)=d_{B|_\n}(pH,qH)$, thus $\tilde d_{g_t}(pH,qH)\leq Cd_{B|_\n}(pH,qH)$. Therefore both families $\tilde d_{g_t},d_{g_t}$ are Lipschitz,  with fixed Lipschitz constant $C$, concluding item $(i)$. Item $(ii)$ follows from the definition of a Lipschitz map. The last assertions follow from  $(i)$ and the pointwise convergence $\tilde d_{g_t}\to \tilde d_g$: Corollary \ref{cor:collapse_part1} and \cite[Example 7.4.4]{burago} guarantees that  $(G/H,\tilde d_g)$  is the Gromov-Hausdorff limit  of the length spaces $(G/K,d_{g_t})$, therefore it is a length space itself (\cite[Theorem 7.5.1]{burago} or \cite[Proposition 3.8]{gromov}).	
\end{proof}

\begin{remark}
	The knowledge \textit{a priori} that $(G/H,\tilde d_g)$ is a length space is key to our proof. Therefore, it is worth remarking that although $(G/H,\tilde d_{g_t})$ might not be length spaces, the degeneration of $g$ guarantees that $(G/H,\tilde d_g)$ is: compare $\tilde d_g$ with the \textit{quotient semi-metric} in \cite[Definition 3.1.12]{burago}.
\end{remark}

Gathering the information so far, we conclude that $(G/H,\tilde d_{g_t})$ converges to $(G/H,\tilde d_g)$, which is a length space whose geodesics are $d_{B|_\n}$-Lipschitz (Lemma \ref{lem:Lipschitz}, item $(ii)$). Since $(G/H,d_{B|_\n})$ is induced by a Riemannian metric, Rademacher Theorem (see \cite[Theorem 5.5.7]{burago}) guarantees that geodesics in $(G/H,\tilde d_g)$ are $C^1$ in  a full measure subset of $[0,1]$. 

To conclude the proof of Theorem \ref{thm:collapse}, recall that the length $\ell_{\tilde d_g}$ can be expressed in terms of the \textit{dilatation of $\tilde d_g$}:
\begin{equation*}\label{eq:length}
\ell_{\tilde d_g}(c)=\int_0^1\rm{dil}_{t}(c)dt,
\end{equation*}
where
\begin{equation*}\label{eq:dil}
\rm{dil}_{t}(c)=\limsup_{\epsilon\to 0}\frac{\tilde d_g(c(t-\epsilon),c(t+\epsilon))}{2\epsilon}
\end{equation*}
(see \cite[section 1.1]{gromov}). Since $\ell_{\tilde d_g}$  is given by an integral, it is sufficient to show that the length of the $C^1$ part of  minimizing geodesics is given by the Finsler norm $F:T(G/H)\to \bb R$,
\begin{equation*}\label{eq:Fminimum}
	F(X)=\inf\left\{|Y|_g~:~ Y\in T(G/K),~d\pi(Y)=X  \right\}.
\end{equation*}
Its corresponding distance in $G/H$ is given by
$$
	    d_F(pH, qH) = \inf \left\{ \int_0^1 F(\dot c(t))\, dt:~ c \in C^1([0,1],G/H), \, c(0) = pH, \, c(1) = qH \right\}.
$$
We conclude the proof of Theorem \ref{thm:collapse} with:

\begin{lema}\label{lemma:collapse_part2}
$\tilde d_g(pH, qH) = d_F(pH, qH)$.
\end{lema}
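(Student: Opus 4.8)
The plan is to prove the two inequalities $\tilde d_g \leq d_F$ and $\tilde d_g \geq d_F$ separately, using the description of the limiting length space $(G/H,\tilde d_g)$ obtained in Lemma \ref{lem:Lipschitz} together with the $C^1$-almost-everywhere regularity of its geodesics.

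\textbf{The inequality $\tilde d_g \leq d_F$.} Given a $C^1$ curve $c$ in $G/H$ from $pH$ to $qH$ and any $\epsilon>0$, I would construct for each $t$ a lift: since $F(\dot c(t)) = \inf\{|Y|_g : d\pi(Y) = \dot c(t)\}$, one may pick, pointwise, horizontal-ish vectors realizing $F$ up to $\epsilon$, and (using $G$-invariance and a compactness/partition argument on $[0,1]$, or the length-space definition allowing us to chop $c$ into small pieces and lift each piece by a near-minimizer in $G/K$) produce a $C^1$ curve $\tilde c$ in $G/K$ with $\pi\circ\tilde c = c$, $\tilde c(0)\in pH$, $\tilde c(1)\in qH$, and $\ell_g(\tilde c) \leq \int_0^1 F(\dot c(t))\,dt + \epsilon$. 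This is essentially the content of the equality between the two definitions of $F$ verified right after the statement of Theorem \ref{thm:collapse}: $d\pi(Y)=X$ iff $Y = h(\Ad(h^{-1})X + Z)$, so lifts always exist and the $g$-infimum over lifts is exactly $\int F(\dot c)$. Taking the infimum over $c$ gives $\tilde d_g(pH,qH) \leq d_F(pH,qH)$.

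\textbf{The inequality $\tilde d_g \geq d_F$.} This is the substantive direction and the main obstacle. Take a minimizing geodesic $c$ of $(G/H,\tilde d_g)$ between $pH$ and $qH$; by Lemma \ref{lem:Lipschitz}$(ii)$ and Rademacher's theorem it is $C^1$ on a full-measure set $U\subseteq[0,1]$, and $\ell_{\tilde d_g}(c) = \int_0^1 \operatorname{dil}_t(c)\,dt = \tilde d_g(pH,qH)$. The key local claim is: for $t\in U$, $\operatorname{dil}_t(c) \geq F(\dot c(t))$. To see this, fix $t\in U$ and note $\tilde d_g(c(t-\epsilon),c(t+\epsilon))$ is, by definition, the infimum of $\ell_g(\tilde c)$ over lifts $\tilde c$ of $c|_{[t-\epsilon,t+\epsilon]}$; one estimates this infimum from below. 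Using $G$-invariance one may translate $c(t)$ to the basepoint and work in $\n$; a lift $\tilde c$ starting at $h_0 K$ and ending at $h_1 K$ with $h_0,h_1\in H$ has $\ell_g(\tilde c) \geq |$ (the $\n$-displacement) $|$ measured in the worst-case fiber — more precisely, writing the displacement and using that $g$ restricted to directions transverse to $\h$ is controlled, one shows $\ell_g(\tilde c) \geq (2\epsilon - o(\epsilon))\cdot F(\dot c(t))$ by comparing with the Lie-algebraic formula \eqref{eq:Finsler-Lie}: any lift must, to first order, project to $\dot c(t)$, and the $g$-length of any vector projecting to $\dot c(t)$ is at least $F(\dot c(t))$, while the fiber-endpoint ambiguity ($\Ad(h)$ and the $\m\cap\h$-component) is exactly what $F$ already optimizes over. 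Dividing by $2\epsilon$ and letting $\epsilon\to 0$ gives $\operatorname{dil}_t(c)\geq F(\dot c(t))$. Integrating over $U$ (and noting the complement has measure zero) yields $\tilde d_g(pH,qH) = \ell_{\tilde d_g}(c) \geq \int_0^1 F(\dot c(t))\,dt \geq d_F(pH,qH)$.

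\textbf{Where the difficulty lies.} The delicate point is the lower bound $\operatorname{dil}_t(c)\geq F(\dot c(t))$: a lift of a short arc of $c$ is free to wander along the (shrinking-in-$g_t$ but \emph{not} in $g$) fiber directions near its endpoints, and one must argue that, although $g$ is only a semi-metric and degenerate precisely on $\m_0$, the $g$-length of the lift still cannot beat $F$ because $F$ was defined as an infimum over exactly that freedom (the $\Ad(h)$-orbit and the $\m\cap\h$ summand), while the \emph{interior} of the arc genuinely has to carry the $\n$-displacement $2\epsilon\,\dot c(t) + o(\epsilon)$. Making the $o(\epsilon)$ estimates uniform and rigorous — i.e. controlling how much a lift can ``cheat'' near $t\pm\epsilon$ versus what it must do in between — is the technical heart, and it is handled by a first-order Taylor expansion of $\pi$ along the lift together with the continuity of $F$ on $T(G/H)$ (which follows from \eqref{eq:Finsler-Lie} and compactness of $H$).
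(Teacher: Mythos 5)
Your two-inequality skeleton matches the paper's, but you have the difficulty exactly inverted, and the direction you flag as ``the technical heart'' is where your argument has a genuine gap. The inequality $d_F \leq \tilde d_g$ is the \emph{easy} one: by the very definition of $F$ as an infimum, $F(d\pi(Y)) \leq |Y|_g$ for every $Y \in T(G/K)$, so for an arbitrary competitor $\tilde c$ in the infimum defining $\tilde d_g(pH,qH)$, the projection $c = \pi\circ\tilde c$ is a $C^1$ curve from $pH$ to $qH$ and
$$
d_F(pH,qH) \;\leq\; \int_0^1 F(\dot c(\xi))\,d\xi \;=\; \int_0^1 F\bigl(d\pi(\dot{\tilde c}(\xi))\bigr)\,d\xi \;\leq\; \int_0^1 |\dot{\tilde c}(\xi)|_g\,d\xi \;=\; \ell_g(\tilde c);
$$
taking the infimum over $\tilde c$ gives $d_F \leq \tilde d_g$ with no geodesics, no Rademacher, and no dilatation. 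Your proposed route through $\mathrm{dil}_{t}(c)\geq F(\dot c(t))$ along a minimizing geodesic of $(G/H,\tilde d_g)$ is not completed: the estimate $\ell_g(\tilde c)\geq (2\epsilon-o(\epsilon))F(\dot c(t))$ for lifts of short arcs is asserted rather than proved, and the uniform control of how much a lift may ``cheat'' near its endpoints --- which you correctly identify as the crux --- is exactly what is missing. It would moreover force you to compare $\int_0^1 F(\dot c)$ for a merely Lipschitz, a.e.-$C^1$ geodesic against $d_F$, which the paper defines as an infimum over $C^1$ curves. All of this evaporates once you apply $F\circ d\pi \leq |\cdot|_g$ globally along the lift rather than infinitesimally.

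For the converse $\tilde d_g \leq d_F$, your plan of building one near-optimal lift of a given $C^1$ curve $c$ is workable but incomplete as written: the pointwise near-minimizers realizing $F(\dot c(t))$ are based at varying points of the fibers, so after partitioning $[0,1]$ you must join consecutive lifts \emph{within} a fiber, and a priori this costs length. The missing observation is that the joining is free: $g$ vanishes on $\m_0$ and the distribution $\mathcal H_0$ is bracket-generating in each fiber, so any two points of a fiber are connected by a horizontal curve of zero $g$-length. The paper sidesteps the construction entirely: for each fixed $t$ it bounds $\mathrm{dil}_{t}(c) \leq |\dot{\tilde c}(t)|_g$ for an \emph{arbitrary} lift $\tilde c$ of $c$, via $\tilde d_g(c(t-\epsilon),c(t+\epsilon)) \leq \int_{t-\epsilon}^{t+\epsilon}|\dot{\tilde c}|_g$ and the mean value theorem; the pointwise infimum over lifts then gives $\mathrm{dil}_{t}(c)\leq F(\dot c(t))$, and integrating, together with the fact that $(G/H,\tilde d_g)$ is a length space (Lemma \ref{lem:Lipschitz}), yields $\tilde d_g(pH,qH)\leq \ell_{\tilde d_g}(c)\leq \int_0^1 F(\dot c)\,dt$. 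That is the direction where the length-space and dilatation machinery is actually needed.
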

\begin{proof}
    Let $\tilde c: [0,1] \to G/K$  be a $C^1$ curve such that $\tilde c(0) \in pH$, $\tilde c(1) \in qH$. Its projection $c = \pi \circ \tilde c$ is a $C^1$ curve in $G/H$ such that $c(0) = pH$, $c(1) = qH$, thus
    $$
    d_F(pH, qH) \leq \int_0^1 F( \dot c(\xi) )d\xi 
    = \int_0^1F(d\pi(\dot{\tilde c}(\xi)) )d\xi
    \leq \int_0^1 |\tilde c(\xi)|_g d\xi = \ell_g(\tilde c).
    $$
    Therefore, $d_F(pH,qH)\leq \tilde d_g(pH,qH)$.
    On the other hand, for any $\epsilon > 0$, there exists $\xi_\epsilon\in(t-\epsilon,t+\epsilon)$ such that 
$$ 
	\tilde d_g(c(t-\epsilon),c(t+\epsilon)) \leq 
	\label{eq:d1}  \int_{t-\epsilon}^{t+\epsilon}
	|\dot {\tilde c}(\xi)|_{g}~d\xi
	 = 2\epsilon |\dot {\tilde c}(\xi_\epsilon)|_{g}.
$$	
%	for some $\xi_\epsilon \in (t - \epsilon, t + \epsilon)$, w
	Which implies that $\rm{dil}_{t}(c)\leq  |\dot {\tilde c}(t)|_{g}$. 
	Since $\tilde c$ is an arbitrary lift of the curve $c$, it follows that $\rm{dil}_{t}(c)\leq F(\dot c(t))$. Thus $\tilde d_g(pH, qH)	 \leq d_F(pH, qH)$.
\end{proof}

\section{Projected Ricci flow}\label{preliminaries} 
%Let $R(x)$ be the vector field of the Ricci flow (\ref{eq-ricci-flow-coords}), for short {\em the Ricci vector field}, which is a rational function of $x$ and homogeneous of degree $0$. 
%
Given $x = (x_1, \ldots, x_n) \in \mathbb{R}_+^n$ and $\lambda > 0$, $x$ and $\lambda x$ describe essentially the same geometry. 
Hence it is interesting to analyze the Ricci flow up to a change of scale $\lambda$, normalizing it as follows.
	\begin{teorema}
		\label{thm-rescaling}
		For $x \in \R^n_+$, let $R(x)$ be a vector field, homogeneous of degree $0$ in $x$, and $W(x)$ a positive scalar function, homogeneous of degree $\alpha \neq 0$ in $x$. Suppose that $R(x)$ and $\rho(x) = W'(x) R(x)/\alpha$ are of class $C^1$. % locally Lipschitz. % is $C^1$ and $W$ is $C^2$.  
		Then the solutions of
		\begin{equation}
		\label{eq-R1}
		\frac{d x }{dt}= R(x)
		\end{equation}
		can be rescaled in space and positively reparametrized in time to solutions of the normalized flow
		\begin{equation}
		\label{eq-R-normalizada}
		\frac{d x }{dt}= R(x) - \rho(x) x, \qquad W(x) = 1
		\end{equation}
		and vice-versa.%, where $\rho(x) = W'(x) R(x)/\alpha$.  %(\alpha W(x))$.
		Furthermore, $R(x) = \lambda x$ with $\lambda \in \R$ and $W(x) = 1$ if, and only if, $x$ is an equilibrium of equation (\ref{eq-R-normalizada}).
	\end{teorema}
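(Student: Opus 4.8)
The plan is to normalize the flow by projecting each ray $\{\lambda x:\lambda>0\}\subset\R^n_+$ onto its \emph{unique} intersection with the hypersurface $\{W=1\}$ — uniqueness holds since $W>0$, $\alpha\neq0$ and $W(\lambda x)=\lambda^\alpha W(x)$ — while simultaneously rescaling time. Concretely, given a solution $x(t)$ of \eqref{eq-R1}, I would set $\varphi(t)=W(x(t))^{-1/\alpha}$, which is well defined and $C^1$ because $W>0$, and note that by homogeneity $W(\varphi(t)x(t))=\varphi(t)^\alpha W(x(t))=1$. Then I would introduce the new time $s=s(t)$ by $ds/dt=\varphi(t)>0$; since $\varphi$ is continuous and positive, $t\mapsto s$ is an increasing diffeomorphism onto an open interval. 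The candidate normalized solution is $\tilde x(s)=\varphi(t)x(t)$ with $t=t(s)$, and the claim to verify is that $\tilde x$ solves \eqref{eq-R-normalizada} and lies on $\{W=1\}$, the latter being immediate from the choice of $\varphi$.

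The verification rests on three elementary facts. First, $R$ is homogeneous of degree $0$, so $R(\tilde x)=R(\varphi x)=R(x)$. Second, the logarithmic derivative of $\varphi$ unwinds, by the chain rule, exactly the definition of $\rho$: $\dot\varphi/\varphi=-\alpha^{-1}W'(x)\dot x/W(x)=-\rho(x)/W(x)$. Third, $\rho$ is the product of $W'$ (homogeneous of degree $\alpha-1$) with $R$ (degree $0$), hence homogeneous of degree $\alpha-1$, while $W(\tilde x)=1$ forces $W(x)=\varphi^{-\alpha}$. Differentiating $\tilde x=\varphi x$ in $t$, substituting the first two facts, and dividing by $ds/dt=\varphi$ gives $d\tilde x/ds=R(\tilde x)-\rho(x)\varphi^{-1}W(x)^{-1}\tilde x$; plugging in the third fact collapses the coefficient of $\tilde x$ to precisely $-\rho(\tilde x)$, so $d\tilde x/ds=R(\tilde x)-\rho(\tilde x)\tilde x$. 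I would also record here that $X\mapsto R(X)-\rho(X)X$ is tangent to $\{W=1\}$: by Euler's identity $W'(X)X=\alpha W(X)$, one has $W'(X)\bigl(R(X)-\rho(X)X\bigr)=\alpha\rho(X)\bigl(1-W(X)\bigr)$, which vanishes on $\{W=1\}$; consequently that hypersurface is invariant and \eqref{eq-R-normalizada} is genuinely a flow on it.

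For the converse, I would reverse the construction using uniqueness, which is available because $R$ and $\rho$, hence $R-\rho\,\mathrm{id}$, are $C^1$: given a solution $\tilde x(s)$ of \eqref{eq-R-normalizada} with $\tilde x(s_0)\in\{W=1\}$, let $x(t)$ be the maximal solution of \eqref{eq-R1} with $x(t_0)=\tilde x(s_0)$; the forward direction turns $x$ into a solution of \eqref{eq-R-normalizada} whose value at $s_0$ is $W(x(t_0))^{-1/\alpha}x(t_0)=\tilde x(s_0)$, so it equals $\tilde x$ by uniqueness, and the time change $s(t)$ identifies the two maximal intervals. For the equilibria statement: if $R(x)=\lambda x$ with $\lambda\in\R$ and $W(x)=1$, then $\rho(x)=\alpha^{-1}W'(x)(\lambda x)=\lambda\alpha^{-1}W'(x)x=\lambda W(x)=\lambda$ by Euler's identity, so $R(x)-\rho(x)x=0$; conversely an equilibrium of \eqref{eq-R-normalizada} satisfies $W(x)=1$ and $R(x)=\rho(x)x$, i.e.\ $R(x)=\lambda x$ with $\lambda=\rho(x)\in\R$. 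I expect the only real nuisance — rather than any deep obstacle — to be the careful bookkeeping of the time reparametrization $s(t)$ and the matching of maximal existence intervals in the ``vice versa'' direction; the core identity is the one-line computation that the $\tilde x$-coefficient equals $-\rho(\tilde x)$.
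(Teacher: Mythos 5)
Your proposal is correct and follows essentially the same route as the paper: rescale onto the level set $\{W=1\}$ using homogeneity, reparametrize time positively, and use the homogeneity degrees of $R$ and $\rho$ together with Euler's identity, with uniqueness for the $C^1$ vector field handling the converse and the equilibria characterization done identically. The only cosmetic difference is that you write the rescaling factor in closed form as $\varphi(t)=W(x(t))^{-1/\alpha}$, whereas the paper characterizes the same $\lambda(t)$ as the solution of the ODE $\lambda'=-\sigma(y)\lambda$ with $W(\lambda(0)y(0))=1$.
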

	\begin{proof}
		Let $y(t)$ be a solution of (\ref{eq-R1}).  By the homogeneity and positivity of $W$, for each $t$ there exists $\lambda(t) > 0$ such that $x(t) = \lambda(t) y(t)$ satisfies $W(x(t)) = 1$.  Differentiating in $t$ we get (to shorten the notation on this paragraph, we omit $t$ from now on)
		\begin{equation}
		\label{eq-derivada-x}
		x' = \lambda' y + \lambda R(y) 
		\end{equation}
		and
		$$
		0 = W'(x)x' = \lambda' W'(x) y + \lambda W'(x) R(y)  
		$$
		Since $W'(x)$ is homogeneous of degree $\alpha - 1$ on $x$, it follows that
		$$
		\lambda' W'(y) y + \lambda W'(y) R(y) = 0
		$$
		where $W'(y) y = \alpha W(y)$ by Euler's theorem on homogeneous functions. Thus, $\lambda$ satisfies
		$$
		\lambda' = - \frac{W'(y) R(y)}{\alpha W(y)} \lambda,
		\qquad
		W( \lambda(0) y(0) ) = 1
		$$
		which justifies differentiating $\lambda$.  Let $\sigma(y) = \frac{W'(y) R(y)}{\alpha W(y)}$, plugging $\lambda'$ into equation (\ref{eq-derivada-x}) gives
		$$
		x' = \lambda ( R(y) - \sigma(y) y ).
		$$
		Since $R(y)$ is homogeneous of degree $0$ and $\sigma(y)$ is homogeneous of degree $-1$, we can further write
		\begin{equation}
		\label{eq-R-normalizada2}
		x' = \lambda ( R(x) - \sigma(x) x ),
		\end{equation}
		Since $W(x) = 1$, we have that $\sigma(x) = \rho(x)$, concluding that $x$ is a solution to (\ref{eq-R-normalizada}), up to positive time reparametrization.
		
		Reciprocally, let $x(t)$ be a solution of (\ref{eq-R-normalizada}), so that $\rho(x(t)) = \sigma(x(t))$. Take $y(t)$ as the solution of the non-normalized flow (\ref{eq-R1}) with $y(0) = x(0)$, and $\lambda(t)$ as the solution of $\lambda'(t) = - \sigma(y(t)) \lambda(t)$, with $\lambda(0) = 1$.  Define the positive time reparametrization $s$ given by $t = t(s) = \int_0^s \lambda(t) dt$, so that $x(s) = x(t(s))$ satisfies $\frac{d}{ds} x(s) = \lambda(s) \frac{d}{dt} x(s)$. Then $x(s)$ satisfies
		$$
		\frac{d}{ds} x(s) = \lambda(s) (R(x(s)) - \sigma(x(s))x(s) ).
		$$
		By equation (\ref{eq-R-normalizada2}), the last equation is also satisfied by $\lambda(s)y(s)$, with the same initial condition, so it follows that $x(s) = \lambda(s) y(s)$ for all $s$.
		%		 where $y(s) = x(s)/\lambda(s)$ is a solution of the non-normalized flow (\ref{eq-R1}).
		
		An equilibrium of equation (\ref{eq-R-normalizada}) clearly satisfies $R(x) = \lambda x$. Reciprocally, if $x$ satisfies both $R(x) = \lambda x$ and $W(x) = 1$, then $W'(x) R(x) = \lambda W'(x)x = \lambda \alpha W(x) = \lambda \alpha$, by Euler's theorem. Thus $\rho(x) = \lambda$ and $R(x) - \rho(x) x = 0$, as claimed.
	\end{proof}

From now on, consider the homogeneous Ricci flow, with vector field $R(x)$ given by (\ref{eq-ricci-flow-final}), which is a rational, homogeneous function of $x$, 
with degree $0$.  Note that normalizing this flow to unit volume $W(x) = \int_M \vol(x)$ we get the homogeneous gradient Ricci flow \eqref{eq-ricci-flow-normaliz-final}. Indeed, $W$ is positive homogeneous of degree $\alpha = d/2$ and \eqref{eq-derivada-ricci-pontual} implies that $W'(x)R(x) = - S(x)$, since for invariant unit volume metrics the total scalar curvature becomes $T(x) = S(x)$.   It follows that $\rho(x) = - 2 S(x)/d$, for which \eqref{eq-R-normalizada} becomes \eqref{eq-ricci-flow-normaliz-final}, as claimed. 

%\begin{remark}
%More generally, it can be shown that Hamilton's Ricci flow \eqref{hamilton-ricci-flow} is obtained from the Ricci flow \eqref{ricci-flow} by normalizing it to unit volume using the procedure above.  Indeed, the previous theorem holds true, with the same proof, when both the vector field $R$ and the functional $W$ are defined on an open cone of a Banach vector space.  Consider then the open cone of Riemannian metrics on the space of smooth sections of bilinear forms on $M$. We have that $R(g)=-2 \Ric(g)$ is homogeneous of degree 0, that $W(g)= \int_M \vol(g)$ is positive and homogeneous of degree $\alpha=d/2$ and that \eqref{eq-derivada-ricci-pontual} implies $W'(g)R(g) = - T(g)$, for a solution $g(t)$ of \eqref{eq-R1} with unit volume.  It follows that $\rho(g) = - 2 T(g)/d$, for which \eqref{eq-R-normalizada} becomes \eqref{hamilton-ricci-flow}, as claimed.
%
%%Take then $E$ to be the Banach space of smooth sections of bilinear forms on $M$ and the open set to be the space of Riemannian metrics. We have that $R(g) = -2\Ric(g)$ is homogeneous of degree 0. Let $W(g) = \int_M \vol(g)$, it is positive and homogeneous of degree $\alpha = d/2$ in $g$.  Let $g(t)$ be a solution to \eqref{eq-R1}, from \eqref{eq-derivada-ricci-pontual} we get that $W'(g)R(g) = \vol'(M,g(t)) = - T(g(t))$.  It follows that $\rho(g) = - 2 T(g)/d$, for which \eqref{eq-R-normalizada} becomes \eqref{hamilton-ricci-flow}, as claimed.
%\end{remark}

Since the homogeneous gradient Ricci flow \eqref{eq-ricci-flow-normaliz-final} is the gradient flow of the scalar curvature $S(x)$ given in \eqref{eq-scalar-curvature}, it follows that $S(x)$ is strictly decreasing on non-equilibrium unit-volume solutions.  By the next result, the latter property can be recovered for every other normalization of the Ricci flow (\ref{eq-ricci-flow-final}), so that they have a gradient-like behaviour. More precisely,

\begin{proposicao}

	We have that 
	$L(x) = S( x ) \mathrm{vol}(x)^{-d/2}$ is strictly decreasing on non-equilibrium solutions of a normalized flow \eqref{eq-R-normalizada} of the Ricci flow \eqref{eq-ricci-flow-final}.
%	is a {\em strict Lyapunov function} for a normalized Ricci flow \eqref{eq-R-normalizada}, that is, it 
%	is strictly decreasing on non-equilibrium solutions.
	In particular, a normalized Ricci flow does not have non-trivial periodic orbits.
\end{proposicao}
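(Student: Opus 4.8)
The plan is to reduce the proposition to the fact recalled just before it --- that the scalar curvature $S$ strictly decreases along non-equilibrium solutions of the unit-volume homogeneous gradient Ricci flow \eqref{eq-ricci-flow-normaliz-final} --- the bridge being the scale-invariance of $L$ together with two applications of Theorem \ref{thm-rescaling}.

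First I would record the structural property of $L$ that makes everything work: it is invariant under the scaling $x\mapsto\lambda x$, i.e.\ homogeneous of degree $0$ in $x$, since $S$ is homogeneous of degree $-1$ and $\mathrm{vol}$ is positive homogeneous of degree $d/2$. Hence, writing $\lambda(x)=\mathrm{vol}(x)^{-2/d}>0$ for the unique scale with $\mathrm{vol}(\lambda(x)x)=1$, one has $L(x)=L(\lambda(x)x)=S(\lambda(x)x)$, the last equality because $L$ agrees with $S$ on $\{\mathrm{vol}=1\}$. In words, $L(x)$ is the scalar curvature of the unit-volume rescaling of $x$.

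Next I would carry out the reduction. Let $x(t)$ be a non-equilibrium solution of a normalized Ricci flow \eqref{eq-R-normalizada}, with normalization $W$. By Theorem \ref{thm-rescaling} applied to this $W$, $x(t)$ is --- after a space rescaling and a positive (hence strictly increasing) time reparametrization --- a solution of the bare homogeneous Ricci flow \eqref{eq-ricci-flow-final}; applying Theorem \ref{thm-rescaling} a second time, now with $W=\mathrm{vol}$ (for which \eqref{eq-R-normalizada} is precisely \eqref{eq-ricci-flow-normaliz-final}), that solution is in turn --- after a further space rescaling and strictly increasing reparametrization --- a solution $u$ of \eqref{eq-ricci-flow-normaliz-final}. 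Composing, there are $\nu(s)>0$ and a smooth strictly increasing $s\mapsto t(s)$ with $x(t(s))=\nu(s)\,u(s)$; in particular $x(t)$ and $u(s)$ lie on the same ray at corresponding parameters. Were $u$ constant, then $x(t)$, lying on a fixed ray and confined to $\{W=1\}$, would itself be constant, contrary to hypothesis; so $u$ is non-equilibrium. Since $u(s)$ has unit volume, the scale-invariance of $L$ gives $L(x(t(s)))=L(\nu(s)u(s))=L(u(s))=S(u(s))$, and $s\mapsto S(u(s))$ is strictly decreasing because $u$ is a non-constant orbit of the gradient flow of $S$. Composing with the increasing reparametrization $s\mapsto t(s)$ shows $t\mapsto L(x(t))$ is strictly decreasing, which is the first assertion. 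The absence of non-trivial periodic orbits follows at once: such an orbit would be a non-constant --- hence non-equilibrium --- solution along which $L$ is simultaneously periodic and strictly decreasing, which is impossible.

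I do not expect a genuinely hard step; the care required is purely bookkeeping: checking that the two time reparametrizations supplied by Theorem \ref{thm-rescaling} are orientation-preserving (this is the word ``positively'' in that statement, which amounts to $\lambda>0$ in its proof) and that ``non-equilibrium'' is transported across both correspondences. An equivalent, more computational route avoids the two-step reduction: differentiate $L$ directly along \eqref{eq-R-normalizada}, use Euler's theorem to drop the $\rho(x)x$ term since $L$ has degree $0$, and use the identity $\mathrm{vol}'(x)R(x)=-S(x)$ to identify $\tfrac{d}{dt}L$ along \eqref{eq-R-normalizada} with $\tfrac{d}{dt}S$ along \eqref{eq-ricci-flow-normaliz-final} evaluated at the unit-volume rescaling of $x$, which is $\le 0$ and vanishes only at equilibria.
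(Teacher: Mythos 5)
Your proposal is correct and follows essentially the same route as the paper's own proof: two applications of Theorem \ref{thm-rescaling} to pass from the normalized flow to the unit-volume gradient flow, where $S$ strictly decreases, combined with the observation that $L(x)=S(y)$ for the unit-volume rescaling $y=\mathrm{vol}(x)^{-2/d}x$ (your extra care in checking that the reparametrizations are orientation-preserving and that non-equilibria map to non-equilibria is welcome bookkeeping the paper leaves implicit). Note that both your argument and the paper's proof actually use $L(x)=S(x)\,\mathrm{vol}(x)^{2/d}$ — the exponent needed for $L$ to be homogeneous of degree $0$ — rather than the $\mathrm{vol}(x)^{-d/2}$ appearing in the statement, which appears to be a typo.
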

\begin{proof}
%
%Let x be a solution for 25. Then y=vol... is a unit- volume solution to ...
%
Modulo normalization to either $\mathrm{vol}(x) = 1$ or $W(x)=1$, by the previous theorem we have that homogeneous gradient and normalized Ricci flow share the same equilibria.
%
%???? p. 14 line -4 to -3. Write better. ``... by Theorem 3.1 we have..."
%``Then, given a non-equilibrium solution..." delete ``by the previous theorem".
Given a non-equilibrium solution $x(t)$ of a normalized Ricci flow \eqref{eq-R-normalizada}, by Theorem \ref{thm-rescaling} it can be rescaled and positively reparametrized to a solution of \eqref{eq-R1} which can then again be rescaled and positively reparametrized to a non-equilibrium solution $y(t)	$ of the unit-volume Ricci flow.  Since $\vol(x)$ is homogeneous of degree $d/2$ in $x$, the rescaling is given by $y(t) = \mathrm{vol}(x(t))^{-2/d} x(t)$, so that $\mathrm{vol}(y(t)) = 1$. Since $S(x)$ is homogeneous of degree $-1$ in $x$, it follows that
$$
S(y(t)) = \mathrm{vol}(x(t))^{2/d} S(x(t)) = L(x(t))
$$
is strictly decreasing in $t$, since this holds for a positive time reparametrization of $y(t)$.

Now, suppose there exists a non-trivial periodic orbit $x(t)$ with period $T>0$. Then $L(x(t))$ is strictly decreasing with $L(x(0)) < L(x(T)) = L(x(0))$, a contradiction.
\end{proof}

In order to study the limiting behaviour of the Ricci flow, by taking advantage of the rationality of $R(x)$, we normalize it to a simplex and rescale it to get a polynomial vector field.  More precisely, denote by an overline the sum of the coordinates of a vector $x = (x_1, \ldots, x_n) \in \R^n$ 
$$
	\overline{x} = x_1 + \cdots + x_n
$$
Consider then the linear scalar function $W(x) = \overline{x}$, whose level set $W(x) = 1$ in  $\R^n_+$ is the open canonical $n$-dimensional simplex ${\cal T}$ (see Figure \ref{projecao-p-fig}).
%
%By normalizing to the simplex $\ov{x} = 1$ we get a bounded level hypersurface, in constrast with 
Note that $\mathcal T$ is a bounded level hypersurface, in contrast with the unbounded unit-volume hypersurface. %

	\begin{figure}[!ht]
		\begin{center}
			\def\svgwidth{4cm}
			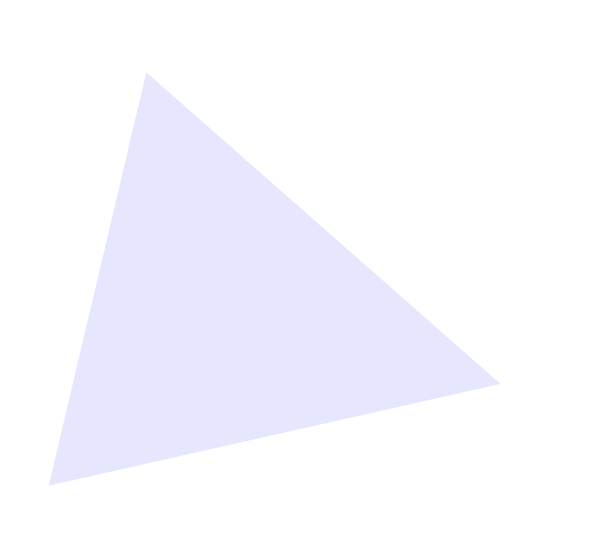
		\end{center}
		\caption{\label{projecao-p-fig} Simplexes ${\cal T}$ and ${\cal S}$ in the case of 3 summands.}
	\end{figure}
	
By linearity we have $W'(x) v = \overline{v}$, so that $\rho(x) = \overline{ R(x)}$, the sum of the coordinates of the vector field $R(x)$.
%whenever  $\overline{x} = 1$. 
%\overline{ R(x)}/\overline{x} = \overline{R(x)}$ when  $\overline{x} = 1$. 
By the previous results, we get the following.	
	
	\begin{corolario}
		\label{corol-rescaling}
		The solutions of the Ricci flow
		\begin{equation}
		\label{eq-R}
		\frac{d x }{dt}= R(x)
		\end{equation}
		can be rescaled in space and reparametrized in time to solutions of the normalized flow
		\begin{equation}
		\label{eq-R-projetada}
		\frac{d x }{dt}=  R(x) - \overline{R(x)}x, \qquad \overline{x} = 1
		\end{equation}
		and vice-versa, where $x$ is Einstein with $\overline{x} = 1$ if and only if it is an equilibrium of equation (\ref{eq-R-projetada}).
		
		Moreover, there exists a function which is strictly decreasing on non-equilibrium solutions of the normalized flow \eqref{eq-R-projetada}. In particular, the projected Ricci flow does not have non-trivial periodic orbits.
%has a strict Lyapunov function.
	\end{corolario}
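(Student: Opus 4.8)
The plan is to obtain the statement as a direct specialization of Theorem \ref{thm-rescaling} together with the preceding Proposition on the monotonicity of $L$. First I would check that the scalar function $W(x) = \overline{x}$ meets the hypotheses of Theorem \ref{thm-rescaling} on the phase space $\R^n_+$: it is strictly positive there, homogeneous of degree $\alpha = 1 \neq 0$, and by linearity $W'(x)v = \overline{v}$, so that $\rho(x) = W'(x)R(x)/\alpha = \overline{R(x)}$. Since $R(x)$ is the Ricci flow vector field of \eqref{eq-ricci-flow-final}, which is rational and homogeneous of degree $0$ with denominators non-vanishing on $\R^n_+$, both $R$ and $\rho = \overline{R(\cdot)}$ are of class $C^1$ on $\R^n_+$. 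Hence Theorem \ref{thm-rescaling} applies, and its normalized flow \eqref{eq-R-normalizada} is precisely $\dot x = R(x) - \overline{R(x)}\,x$ with $\overline{x}=1$, i.e.\ \eqref{eq-R-projetada} on the simplex $\mathcal{T} = \{\,\overline{x}=1\,\}$. This gives the asserted rescaling/reparametrization correspondence between \eqref{eq-R} and \eqref{eq-R-projetada}. The explicit rescaling identifying a solution $y(t)$ of \eqref{eq-R} with one on $\mathcal{T}$ is the radial projection $y \mapsto y/\overline{y}$, which is well defined and smooth on $\R^n_+$ exactly because $\overline{y} > 0$ there.

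For the characterization of equilibria, recall from the discussion after \eqref{eq-ricci-flow-final} that $x \in \R^n_+$ is Einstein precisely when $R(x) = \lambda x$ for some $\lambda > 0$. The last assertion of Theorem \ref{thm-rescaling} says that $R(x) = \lambda x$ with $W(x) = 1$ holds if and only if $x$ is an equilibrium of the normalized flow; specializing $W(x) = \overline{x}$ this says that a point of $\mathcal{T}$ is an equilibrium of \eqref{eq-R-projetada} if and only if it is Einstein with $\overline{x} = 1$, as claimed.

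For the final two sentences, note that \eqref{eq-R-projetada} is a normalized flow of the Ricci flow \eqref{eq-ricci-flow-final} in the sense of the preceding Proposition, so that Proposition provides the function $L(x) = S(x)\,\mathrm{vol}(x)^{-d/2}$, strictly decreasing along non-equilibrium solutions of \eqref{eq-R-projetada}; restricting $L$ to $\mathcal{T}$ yields the desired monotone function. The absence of non-trivial periodic orbits then follows formally, exactly as in the proof of that Proposition: a periodic non-equilibrium orbit $x(t)$ of period $T > 0$ would force $L(x(0)) < L(x(T)) = L(x(0))$, a contradiction.

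I expect there to be no genuine obstacle here, only the routine verification of the hypotheses of Theorem \ref{thm-rescaling} — positivity of $\overline{x}$, its homogeneity degree, and the $C^1$ regularity of $R$ and $\overline{R(\cdot)}$ on $\R^n_+$ — together with a one-line check that the monotone $L$ transported through the Proposition's chain of rescalings is the right object. Everything beyond that is a purely formal consequence of the two results already established, so the proof should be short.
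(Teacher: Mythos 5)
Your proposal is correct and follows exactly the paper's (implicit) argument: the corollary is stated as a direct specialization of Theorem \ref{thm-rescaling} to the linear function $W(x)=\overline{x}$, which is positive and homogeneous of degree $1$ on $\R^n_+$ with $\rho(x)=\overline{R(x)}$, combined with the preceding Proposition supplying the strictly decreasing function $L(x)=S(x)\,\mathrm{vol}(x)^{-d/2}$ and hence the absence of non-trivial periodic orbits. No gaps.
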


To study the limiting behavior of \eqref{eq-R-projetada} on ${\cal T}$, 
%$\overline{x} = 1$,  
it is convenient to multiply it by an appropriate positive function $f: \mathbb{R}_+^n \to \mathbb{R}_+$
in order to get a homogeneous polynomial vector field $X(x)$ defined in the closure of ${\cal T}$ and tangent to the boundary of $\mathcal T$, given by
	\begin{eqnarray}
	\label{campo-X}
	X(x) & = & f(x)\left (R(x) - \overline{R(x)}\, x \right ) \label{eq-def-X} \\
	& = & (fR)(x) - \overline{(fR)(x)}\, x \nonumber
	\end{eqnarray}
since $W(x) = \overline{x}$ is linear.  In particular, solutions of the new field in the iterior of $\mathcal T$ are time-reparametrizations of \eqref{eq-R-projetada}. 
Therefore, to get a polynomial vector field $X$, it suffices to choose $f$ such that $(fR)(x) = f(x) R(x)$ is a polynomial vector field.
	%which only reparametrizes the time of the original flow in $\mathbb{R}_+^n$.
	% since, by the chain rule, it is immediate that $\Phi^{\varphi(t,x)}$ is the flow associated to the vector field $fR$, where, for each $x \in \mathbb{R}^n$, $\varphi(t,x)$ is the unique solution of the following initial value problem
	%\[
	%\frac{\partial}{\partial t} \varphi(t,x) = f(   \Phi^{\varphi(t,x)}(x)   ) 
	%\qquad
	%\varphi(0,x) = 0
	%\]
Moreover, in order for $X$ to be tangent to the boundary of $\mathcal T$, it is sufficient that the $i$-th coordinate of $(fR)(x)$ vanishes whenever the $i$-th coordinate does or, equivalently, that each coordinate hyperplane $\Pi_i = \{ x:~x_i = 0\}$ is invariant by the flow of $fR$.  Given a subset of indexes $I \subseteq \{ 1, \ldots, n \}$, consider the subspace $\Pi_I = \cap_{i \in I} \Pi_i$ and let ${\cal T}_I = \mathrm{cl}({\cal T}) \cap \Pi_I$ be the $I$-th face of the simplex ${\cal T}$. 
Note that ${\cal T}_\varnothing = \mathrm{cl}({\cal T})$.
	
	\begin{proposicao}
		\label{propos-estratific}
		If $fR$ is tangent to each hyperplane $\Pi_i$, then each face ${\cal T}_I$ of ${\cal T}$ is invariant by the flow of $X$.  In particular, $\mathrm{cl}({\cal T})$ is invariant and its vertices are fixed points. 
	\end{proposicao}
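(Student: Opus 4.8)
The plan is to reduce the statement to two ingredients: that the polynomial field $X$ of \eqref{campo-X} is tangent to every affine hyperplane that bounds a face of ${\cal T}$, and the standard fact that a hypersurface to which a smooth vector field is tangent is invariant under its flow. Recall that inside the ambient $\R^n$ the face ${\cal T}_I = \mathrm{cl}({\cal T}) \cap \Pi_I$ is cut out by the coordinate hyperplanes $\Pi_i$ with $i \in I$, by the affine hyperplane $\{\overline{x} = 1\}$, and by the half-space conditions $x_j \ge 0$ for $j \notin I$, so I would deal with these in turn.

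First I would check tangency. From $X(x) = (fR)(x) - \overline{(fR)(x)}\,x$ and the linearity of $x \mapsto \overline{x}$ one gets $\overline{X(x)} = \overline{(fR)(x)}\,\bigl(1 - \overline{x}\bigr)$, which vanishes on $\{\overline{x}=1\}$; hence $X$ is tangent there. Likewise the $i$-th coordinate of $X$ is $X_i(x) = (fR)_i(x) - \overline{(fR)(x)}\,x_i$, and the hypothesis that $fR$ is tangent to $\Pi_i$ means exactly that $(fR)_i$ vanishes on $\Pi_i = \{x_i = 0\}$, whence $X_i$ vanishes there too; so $X$ is tangent to each $\Pi_i$. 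Because $f$ was chosen so that $fR$ is polynomial, $X$ is polynomial, hence globally defined, smooth, and with unique solutions; therefore each $\Pi_i$, the hyperplane $\{\overline{x}=1\}$, and every intersection $\Pi_I \cap \{\overline{x}=1\}$ is invariant under the flow of $X$.

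It remains to upgrade this to invariance of the closed faces and to handle the vertices. By uniqueness of solutions, a trajectory that meets one of the invariant hyperplanes $\Pi_i$ is contained in it; consequently a trajectory with $x_i(0) > 0$ has $x_i(t) > 0$ on its entire maximal interval, while one with $x_i(0) = 0$ has $x_i(t) = 0$ there. Hence a trajectory through a point $x \in \mathrm{cl}({\cal T})$, with $I_0 := \{ i : x_i = 0 \}$, stays in $\{\overline{x}=1\} \cap \Pi_{I_0}$ with all coordinates outside $I_0$ positive, so it never leaves $\mathrm{cl}({\cal T})$; and if $x \in {\cal T}_I$ then $I \subseteq I_0$, so the trajectory remains in $\Pi_I \cap \mathrm{cl}({\cal T}) = {\cal T}_I$. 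Each vertex $e_i$ is the $0$-dimensional face ${\cal T}_{\{1,\dots,n\}\setminus\{i\}}$: being an invariant singleton it is a fixed point (alternatively, tangency to $\Pi_j$ for all $j \ne i$ forces $(fR)_j(e_i) = 0$, hence $\overline{(fR)(e_i)} = (fR)_i(e_i)$ and every coordinate of $X(e_i)$ vanishes). The only genuinely non-formal step here is this last promotion from infinitesimal tangency to invariance of faces that have boundary --- ruling out a trajectory slipping through a coordinate hyperplane --- and it rests on uniqueness of solutions, i.e.\ on the polynomiality (local Lipschitzness) of $X$; everything else is bookkeeping with the explicit formula for $X$.
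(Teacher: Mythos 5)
Your proof is correct and follows essentially the same route as the paper's: establish tangency of $X$ to the hyperplane $\overline{x}=1$ and to each $\Pi_i$, deduce invariance of these hyperplanes and hence of the half-spaces $x_i>0$, $x_i<0$, and obtain the faces as intersections of invariant sets. You simply spell out the details (the computation $\overline{X(x)}=\overline{(fR)(x)}(1-\overline{x})$, the role of uniqueness of solutions, and the vertex case) that the paper leaves implicit.
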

	\begin{proof}
		Note that $X$ is both tangent to ${\cal T}$ and to each hyperplane $\Pi_i$.  
		By continuity of the solutions in $t$, the invariance of $\Pi_i$ implies the invariance of each semi-space $x_i > 0$ and $x_i < 0$.  The result then follows by taking intersection of these invariant semi-spaces.
	\end{proof}
	
For simplicity, we make an additional modification on the flow.	Instead of analyzing the dynamics of the flow associated to $X$ restricted to ${\cal T}$, it is more convenient to analyze the dynamics of the projection of X to the simplex
	\[
	{\cal S} = \{(x_1,\ldots,x_{n - 1}) \in \mathbb{R}_+^{n-1} :~ x_1 + \cdots + x_{n - 1} \leq 1\}
	\]
	(see Figure \ref{projecao-p-fig}) associated to the conjugated vector field
	$$
	Y = P \circ X \circ P^{-1}
	$$
	where $P: {\cal T} \to {\cal S}$ is given by the projection
$P(x_1,\ldots,x_{n - 1},x_n) = (x_1,\ldots,x_{n - 1})$
	with inverse
%$P^{-1}: {\cal S} \to {\cal T}$ given by
$P^{-1}(x_1,\ldots,x_{n - 1}) = P(x_1,\ldots,x_{n - 1}, 1 - x_1 - \cdots - x_{n-1})$.
	The flow of $Y$ in ${\cal S}$ is the so called {\em projected Ricci flow}.
	
	\begin{proposicao}
		\label{prop:simplex-projection}
		If the vector field $fR$ is polynomial of degree $d$, then the vector fields
		$X$ given by equation \eqref{campo-X} and
		 $Y = P \circ X \circ P^{-1}$
		are polynomial of degree $d+1$ and the associated flows are conjugated.  %Furthermore, $x \in {\cal T}$ is Einstein if and only if $Y( Px ) = 0$.
		Moreover, $x\in\mathcal T$ is Einstein if and only if $Y( Px ) = 0$.
	\end{proposicao}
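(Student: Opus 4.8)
The plan is to unwind both constructions explicitly and then verify the three assertions---polynomiality with the degree bound, conjugacy of the flows, and the Einstein characterization---in that order, using Proposition~\ref{propos-estratific} and Corollary~\ref{corol-rescaling}.

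\emph{Degrees.} Set $Q = fR$, a polynomial vector field of degree $d$ by hypothesis. Then $\overline{Q(x)} = \sum_{i=1}^n Q_i(x)$ is a scalar polynomial of degree $d$, so $\overline{Q(x)}\,x$ has degree $d+1$, and hence $X(x) = Q(x) - \overline{Q(x)}\,x$ is polynomial of degree at most $d+1$ (equal to $d+1$ unless the top-degree homogeneous part of $\overline{Q}$ vanishes identically). Since $P$ forgets the last coordinate and $P^{-1}$ is the affine map $(x_1,\dots,x_{n-1})\mapsto(x_1,\dots,x_{n-1},\,1-x_1-\cdots-x_{n-1})$, one has $Y_i(x_1,\dots,x_{n-1}) = X_i(x_1,\dots,x_{n-1},\,1-x_1-\cdots-x_{n-1})$ for $i\le n-1$; thus $Y$ is obtained from $X$ by substituting an affine expression for $x_n$, so it is polynomial of degree at most $d+1$ as well.

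\emph{Conjugacy of flows.} A direct computation from \eqref{campo-X}, using that $\overline{(\cdot)}$ is linear, gives $\overline{X(x)} = \overline{Q(x)} - \overline{Q(x)}\,\overline{x} = \overline{Q(x)}\,(1-\overline{x})$, which vanishes on the hyperplane $\overline{x}=1$; hence $X$ is tangent to $\mathcal{T}$ and, on $\mathcal{T}$, its $n$-th component equals minus the sum of the other $n-1$. By Proposition~\ref{propos-estratific} the compact set $\mathrm{cl}(\mathcal{T})$ is invariant under the flow of $X$. The map $P$ is the restriction to the affine slice $\{\overline{x}=1\}$ of a surjective linear map $\R^n\to\R^{n-1}$, hence an affine diffeomorphism $\mathrm{cl}(\mathcal{T})\to\mathrm{cl}(\mathcal{S})$ whose differential at each point is that linear projection; consequently $dP_x(X(x)) = Y(P(x))$ for every $x\in\mathrm{cl}(\mathcal{T})$, i.e.\ $Y$ is $P$-related to $X$. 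Vector fields related by a diffeomorphism have conjugate flows, so the flow of $X$ on $\mathrm{cl}(\mathcal{T})$ and that of $Y$ on $\mathrm{cl}(\mathcal{S})$ are conjugate; on the open simplex, where $f>0$, they are moreover positive time-reparametrizations of the normalized flow \eqref{eq-R-projetada}, as already observed.

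\emph{Einstein characterization.} Let $x\in\mathcal{T}$, so $f(x)>0$ and $\overline{x}=1$. By \eqref{campo-X}, $X(x)=0$ iff $R(x)-\overline{R(x)}\,x=0$, which by Corollary~\ref{corol-rescaling} (using $\overline{x}=1$) holds iff $x$ is Einstein. Moreover $Y(Px)=dP_x(X(x))$, and $dP_x$ restricted to $T_x\mathcal{T}=\{v:\overline{v}=0\}$ is a linear isomorphism onto $\R^{n-1}$; since $X(x)\in T_x\mathcal{T}$, this gives $Y(Px)=0\iff X(x)=0$. Hence $x\in\mathcal{T}$ is Einstein iff $Y(Px)=0$. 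All the computations are routine; the only points requiring care are being candid that the degree is exactly $d+1$ only generically, and the tangency bookkeeping---precisely the identity $\overline{X}=\overline{(fR)}\,(1-\overline{x})$ together with the linear constraint defining $\mathcal{T}$---which is what allows the single equation $Y(Px)=0$ to encode all $n$ components of $X(x)=0$. I foresee no genuine obstacle.
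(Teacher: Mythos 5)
Your proof is correct and follows essentially the same route as the paper: degree counting via the term $\overline{(fR)(x)}\,x$, conjugacy through the linear/affine map $P$, and the Einstein characterization by observing that $\ker P$ (the $x_n$-axis) is transversal to $T_x\mathcal{T}$ while $X(x)$ is tangent to $\mathcal{T}$. You simply spell out more of the bookkeeping (the identity $\overline{X(x)}=\overline{(fR)(x)}(1-\overline{x})$ and the caveat that the degree is exactly $d+1$ only when the top homogeneous part of $\overline{(fR)}$ does not vanish) than the paper does.
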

	
	\begin{proof}
		Since $X$ and $Y$ are conjugated by the linear map $P$, the same is true for their associated flows. The term $\overline{fR(x)}x$ shows that $X$ has degree $d+1$ and it is immediate that $X$ and $Y$ have the same degree since $P$ and $P^{-1}$ have degree one.  From Proposition \ref{thm-rescaling} it follows that $x \in {\cal T}$ is Einstein if and only if $X(x) = 0$.  Since the kernel of $P$ is the $x_n$ axis and since $Y \circ P = P \circ X$, it follows that $Y( Px ) = 0$ if and only if $X(x)$ is parallel to the $x_n$ axis, hence if and only $X(x) = 0$, since $X(x)$ is tangent to ${\cal T}$.
	\end{proof}
	
	The next lemma is well known and connects symmetries of the flow with symmetries of its invariant sets.
	
	\begin{lema}
		\label{lema-simetrias}
		If $T: \mathbb{R}^n \to \mathbb{R}^n$ commutes with the flow $\Phi^t$ of $R$ for all $t$, then the fixed point set of $T$ is $\Phi^t$-invariant.  In particular, if $T$ is a linear isomorphism that commutes with the vector field $R$, then the fixed point set of $T$ is $\Phi^t$-invariant. 
	\end{lema}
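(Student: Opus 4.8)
The plan is to prove the two assertions in turn, with the second reducing to the first. For the first statement, I would fix a point $x$ in the fixed point set $\mathrm{Fix}(T)=\{x\in\mathbb{R}^n:\,Tx=x\}$ and any $t$ for which $\Phi^t(x)$ is defined, and simply compute
$$
T(\Phi^t(x))=\Phi^t(Tx)=\Phi^t(x),
$$
where the first equality is the hypothesis that $T$ commutes with $\Phi^t$ and the second uses $Tx=x$. This shows $\Phi^t(x)\in\mathrm{Fix}(T)$, i.e.\ $\mathrm{Fix}(T)$ is $\Phi^t$-invariant. There is no real obstacle here; one only needs to note that the identity is applied pointwise along the (possibly proper) domain of the local flow, so ``invariant'' is understood in the sense that an orbit meeting $\mathrm{Fix}(T)$ stays in $\mathrm{Fix}(T)$ for as long as it exists.

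For the second statement I would first upgrade ``$T$ commutes with the vector field $R$'' to ``$T$ commutes with the flow $\Phi^t$'', after which the first part applies verbatim. Concretely, since $T$ is linear its differential is $T$ itself, so the commutation with $R$ means $T R(x)=R(Tx)$ for all $x$. Given a solution $x(t)=\Phi^t(x_0)$ of $\dot x=R(x)$, set $y(t)=T x(t)$; then
$$
\dot y(t)=T\dot x(t)=T R(x(t))=R(Tx(t))=R(y(t)),
$$
so $y(t)$ solves the same ODE with $y(0)=Tx_0$. By uniqueness of solutions (using that $R$ is, say, $C^1$), $y(t)=\Phi^t(Tx_0)$, that is $T\Phi^t(x_0)=\Phi^t(Tx_0)$ on the common interval of existence. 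Hence $T$ commutes with $\Phi^t$ and the conclusion follows from the first part.

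The only point requiring a word of care --- and the closest thing to an obstacle --- is the bookkeeping of domains: the flow $\Phi^t$ need not be complete, so the identities above are equalities of germs of orbits and ``invariant'' is meant in the forward- and backward-orbit sense. In the applications of this lemma the relevant vector field ($fR$, or $X$) is polynomial and tangent to the compact set $\mathrm{cl}(\mathcal T)$ by Proposition \ref{propos-estratific}, so its flow is complete there and this subtlety disappears.
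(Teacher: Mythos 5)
Your proof is correct and follows essentially the same route as the paper: the first part is the identical one-line computation, and your ODE/uniqueness argument for the second part is just a spelled-out version of the paper's observation that the flow of $T\circ R\circ T^{-1}=R$ is $T\circ\Phi^t\circ T^{-1}=\Phi^t$. The extra remarks on domains of the local flow are a reasonable, if inessential, refinement.
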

	\begin{proof}
		For the first part, if $T(x) = x$ then $T( \Phi^t(x) ) = \Phi^t( T(x) ) = \Phi^t(x)$, so that $\Phi^t(x)$ belongs to the fixed point set of $T$, as claimed. For the second part, note that the flow of the vector field $T \circ R \circ T^{-1} = R$ is $T \circ \Phi^t \circ T^{-1} = \Phi^t$ and use the first part.
	\end{proof}

\section{Flag manifolds of type II}\label{sec:typeI}

We start our analysis with Type II flag manifolds, listed in Table \ref{tab-typeI}, since it includes two infinite families of $SU(n)$ and $SO(2\ell)$ flag manifolds, 
while Type I consists of finitely many flag manifolds of exceptional Lie groups.

We will denote an invariant metric $g$ by a triple of positive real numbers $(x,y,z) \in \R^3_+$. 

\subsection{$SU(m+n+p)/S(U(m)\times U(n) \times U(p))$} \label{sec-su-n}
Let us now consider the family of generalized flag manifolds $SU(m+n+p)/S(U(m)\times U(n) \times U(p))$, which encompasses $SU(3)/T^2$, since $T^2 = S(U(1)\times U(1) \times U(1))$.
It is well known that the isotropy representation of such family decomposes into 3 irreducible components and these homogeneous manifolds admits 4 invariant Einstein metric (up to scale): 1 Einstein-K\"ahler metric and other 3 non-K\"ahler Einstein, see for instance \cite{kimura}.

The components of the Ricci operator of the invariant metric $g$ are given by (see \cite{sakane})
\begin{eqnarray*}
r_x &=& \frac{1}{2x}+ \frac{mnp}{4mn(m+n+p)}\left( \frac{x}{yz} - \frac{z}{xy} -\frac{y}{xz}  \right) \\
r_y &=& \frac{1}{2y}+ \frac{mnp}{4mp(m+n+p)}\left( \frac{y}{xz} - \frac{x}{yz} -\frac{z}{xy}  \right)\\
r_z &=& \frac{1}{2z}+ \frac{mnp}{4np(m+n+p)}\left( \frac{z}{xy} - \frac{x}{yz} -\frac{y}{xz}  \right)
\end{eqnarray*}
and the corresponding Ricci flow equation 
$$
x' = -2xr_x \qquad y'= -2yr_y \qquad z'= -2zr_z
$$
Now we use the results of Section \ref{preliminaries} in order to study the projection of the system of ordinary differential equations on the plane $x+y+z=1$. More precisely, we will consider the vector field $X = (A,B,C)$, given by
\begin{equation*} \label{campoABC}
\begin{pmatrix}
A \\
B \\
C
\end{pmatrix}
=
\begin{pmatrix}
F \\
G \\
H
\end{pmatrix}
- 
(F+G+H)
\begin{pmatrix}
x \\
y \\
z
\end{pmatrix}
\end{equation*}
where
\begin{eqnarray*}
F(x,y,z)&=&-x \left(p (+x^2-y^2-z^2) + 2(m+n+p) yz \right) \\
G(x,y,z)&=& -y \left(n (-x^2+y^2-z^2) + 2(m+n+p)xz \right) \\
H(x,y,z)&=& -z \left(m (+x^2+y^2-z^2) + 2(m+n+p)xy \right)
%F(x,y,z)&=&-x \left(p \left(x^2-(y-z)^2) + 2(m+n) yz \right)\right) \\
%G(x,y,z)&=& -y \left(n \left(y^2-(z-x)^2) + 2(m+p)xz \right)\right) \\
%H(x,y,z)&=& -z \left(m \left(z^2-(x-y)^2) + 2(n+p)xy \right) \right)
%F(x,y,z)&=&-x \left(2 y z (m+n)  +p \left(x^2-(y-z)^2\right)\right) \\
%G(x,y,z)&=& -y \left(2 x z (m+p) +n \left(y^2-(z-x)^2\right)\right) \\
%H(x,y,z)&=& -z \left(2x y (n+p) + m \left(z^2-(x-y)^2\right) \right)
%G(x,y,z)&=& -y \left(2 x z (m+p)-n \left(x^2-2 x z-y^2+z^2\right)\right) \\
%H(x,y,z)&=& z \left(m \left(x^2-2 x y+y^2-z^2\right)-2 x y (n+p)\right)
\end{eqnarray*}
are obtained from the Ricci vector field by multiplying it by $2xyz(m+n+p)$.
A straightforward computation yields
\begin{eqnarray*}
A(x,y,z)&=& x (m z (-x^2+6 x y-y^2-2 y+z^2)+n y (-x^2+6 x z+y^2-z (z+2))\\
 && +p (x^3-x^2-x (y^2-6 y z+z^2)+(y-z)^2))\\ \\
 B(x,y,z)&=& y (m z (-x^2+x (6 y-2)-y^2+z^2)+n (x^2 (-(y-1))+2 x (3 y-1) z\\
 &&+(y-1) (y^2-z^2))+p x (x^2-y^2+6 y z-z (z+2)))\\ \\
C(x,y,z) &=& z (m (x^2 (-(z-1))+2 x y (3 z-1)+(z-1) (z^2-y^2))\\ 
&&+n y (-x^2+6 x z-2 x+y^2-z^2)+p x (x^2-y^2+6 y z-2 y-z^2)). 
\end{eqnarray*} 

In order to project the vector field $X = (A,B,C)$ to the vector field $Y = (u,v)$ on the 
simplex ${\cal S}$, we take
\begin{eqnarray*}
u(x,y)= A(x,y,1-x-y) \qquad v(x,y)=B(x,y,1-x-y)
\end{eqnarray*}
to get the corresponding {\em projected Ricci flow}
\begin{equation}\label{system-u-v-caso-su}
\left\{
\begin{array}{lll}
u(x,y)&=&-x (2 x-1) (m (4 y-1) (x+y-1)+n y (4 x+4 y-3)+p (x (4 y-1)+(1-2 y)^2)) \\
v(x,y)&=&-y (2 y-1) (m (4 x-1) (x+y-1)+n (y (4 x-1)+(1-2 x)^2)+p x (4 x+4 y-3))
\end{array}
\right.
\end{equation}
Below we compute its singularities and the corresponding eigenvalues $\lambda_1$, $\lambda_2$ of its Jacobian.

\begin{teorema}\label{teo-sing-su-n}
Let us consider the flag manifold $SU(m+n+p)/S(U(m)\times U(n)\times U(p))$, with $m \geq n \geq p >0$, and the corresponding projected Ricci flow equations given by (\ref{system-u-v-caso-su}). \info{ foi trocado K\"ahler e non-K\"ahler}We have

\bigskip

\hspace{-60pt}
\begin{tabular}{|c|c|c|c|c|}
\hline 
Singularity & Type of metric & $\lambda_1$ & $\lambda_2$ & Type of singularity \\ 
\hline 
$O =(0,0)$ & degenerate & $m+p$ & $m+n$ & repeller \\ \hline
$P=(0,1)$ & degenerate & $n+p$ & $m+n$ & repeller \\ \hline
$Q=(1,0)$ & degenerate & $n+p$ & $m+p$ & repeller \\  \hline
$K=(0,\frac{1}{2})$ & degenerate & $-\frac{1}{2}(m+n)$ & $-\frac{1}{2}(m+n)$ & attractor \\ \hline 
$L=(\frac{1}{2},\frac{1}{2})$ & degenerate & $-\frac{1}{2}(n+p)$ & $-\frac{1}{2}(n+p)$ & attractor  \\ \hline
$M=(\frac{1}{2},0)$ & degenerate  & $-\frac{1}{2}(m+p)$ & $-\frac{1}{2}(m+p)$ & attractor \\ \hline
$N=\left(\frac{m+n}{2 (m+n+p)} ,\frac{m+p}{2 (m+n+p)}\right)$ & Einstein non-K\"ahler & $\lambda_1(N)$ & $\lambda_2(N)$ & repeller  \\ \hline 
$R=\left(\frac{m+n}{2 (2 m+n+p)},\frac{m+p}{2 (2 m+n+p)}\right)$ & K\"ahler-Einstein  & $-\frac{m (m+n) (m+p)}{(2 m+n+p)^2}$ & $\frac{(m+n) (m+p)}{2 (2 m+n+p)}$ & hyperbolic saddle  \\ \hline 
$S= \left(\frac{1}{2},\frac{m+p}{2 (m+n+2 p)} \right)$ & K\"ahler-Einstein  & $-\frac{p (m+p) (n+p)}{(m+n+2 p)^2}$ & $\frac{(m+p) (n+p)}{2 (m+n+2 p)}$ & hyperbolic saddle   \\ \hline 
$T=\left(\frac{m+n}{2 (m+2 n+p)},\frac{1}{2}\right)$ & K\"ahler-Einstein & $-\frac{n (m+n) (n+p)}{(m+2 n+p)^2}$ & $\frac{(m+n) (n+p)}{2 (m+2 n+p)}$ & hyperbolic saddle  \\ \hline 
\end{tabular}

\bigskip
 
where
\begin{eqnarray*}
\lambda_1(N)&=&\frac{-\sqrt{(m+n) (m+p) (n+p) \left(m^2 (n+p)+m \left(n^2-6 n p+p^2\right) +n p (n+p)\right)}}{4 (m+n+p)^2}\\ &&+\frac{m^2 (n+p)+m (n+p)^2+n^2 p+n p^2}{4 (m+n+p)^2}
\end{eqnarray*}

\begin{eqnarray*}
\lambda_2(N)&=&\frac{\sqrt{(m+n) (m+p) (n+p) \left(m^2 (n+p)+m \left(n^2-6 n p+p^2\right)+n p (n+p)\right)}}{4 (m+n+p)^2} \\
  &&    + \frac{m^2 (n+p)+m (n+p)^2+n^2 p+n p^2}{4 (m+n+p)^2}
\end{eqnarray*}
\end{teorema}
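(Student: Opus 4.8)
The plan is to verify the table entry by entry, splitting the work into: (i) locating the singularities, (ii) identifying the type of metric each represents, and (iii) computing the Jacobian and its eigenvalues at each point. For step (i), the singularities are the solutions of $u(x,y)=v(x,y)=0$ inside $\mathrm{cl}(\cal S)$. Since both $u$ and $v$ in \eqref{system-u-v-caso-su} factor with explicit linear factors $x(2x-1)$ and $y(2y-1)$ respectively, I would first enumerate the solutions coming from these factors: setting $x\in\{0,\tfrac12\}$ and/or $y\in\{0,\tfrac12\}$ produces the vertices $O,P,Q$ and the edge-midpoints $K,L,M$ after checking the remaining factor. The three remaining equilibria $N,R,S,T$ (wait — four points) come from setting the big cubic cofactors to zero; here I would invoke Proposition \ref{prop:simplex-projection}, which says $x\in\cal T$ is Einstein iff $Y(Px)=0$, so these interior/face equilibria are exactly the images under $P$ of the four invariant Einstein metrics of the flag manifold, which are classically known (see \cite{kimura,sakane}). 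That pins down their coordinates and simultaneously settles step (ii): $N$ is the non-Kähler Einstein metric, while $R,S,T$ are the three Kähler-Einstein metrics (one for each of the three invariant complex structures, equivalently the three choices of which summand plays the distinguished role); the six degenerate boundary points are not metrics on $\mathbb F$ at all, hence labeled "degenerate".

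For step (iii), the computation is in principle routine but lengthy: form the $2\times2$ Jacobian $DY=\begin{pmatrix} u_x & u_y\\ v_x & v_y\end{pmatrix}$ and evaluate at each of the ten points. At the boundary points this simplifies dramatically because of the factored form: e.g.\ at $O=(0,0)$ the linear part of $u$ is $(m+p)x$-ish and of $v$ is $(m+n)y$-ish after expanding the product of the constant terms of the cofactors, giving a diagonal Jacobian with the stated eigenvalues $m+p,\,m+n>0$, hence a repeller; similarly for $P,Q$. At $K,L,M$ one of the factors $(2x-1)$ or $(2y-1)$ vanishes, and a short expansion shows the Jacobian is (a multiple of) the identity with the stated negative double eigenvalue, hence an attractor (degenerate node). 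At $N$ the Jacobian is $2\times2$ with a genuine pair of eigenvalues; I would compute $\mathrm{tr}\,DY(N)$ and $\det DY(N)$ as symmetric rational functions of $m,n,p$ and then write $\lambda_{1,2}(N)=\tfrac12\big(\mathrm{tr}\pm\sqrt{\mathrm{tr}^2-4\det}\big)$, which after simplification yields the displayed radical expressions; positivity of both (repeller) follows since $\mathrm{tr}>0$ and $\det<0$ — the latter forced by the factor $m^2(n+p)+m(n^2-6np+p^2)+np(n+p)$ under the radical being compared against the trace-squared, or more cleanly by checking the discriminant sign directly. At $R,S,T$ the Jacobian has one negative and one positive eigenvalue (the stated values), so these are hyperbolic saddles; by the symmetry of the flag manifold under permuting $m,n,p$ (which is realized, per Lemma \ref{lema-simetrias}, by a linear symmetry of the vector field conjugating $R\leftrightarrow S\leftrightarrow T$), it suffices to do the computation at one of them, say $R$, and read off the other two by relabeling.

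The main obstacle is purely computational bookkeeping rather than conceptual: the cofactor cubics defining $N,R,S,T$ must be solved and the resulting coordinates substituted into the degree-3 partial derivatives, producing large polynomial expressions in $m,n,p$ that must be simplified to the compact forms in the table — this is exactly where the \emph{Mathematica} verification mentioned in the introduction is essential. A secondary subtlety is confirming the \emph{signs} of the eigenvalues at $N$ (to justify "repeller") and at $K,L,M$ (to justify "attractor" and note these are degenerate/star nodes, not foci), and — for the later global statements though not strictly needed here — checking that the boundary faces are invariant, which is already guaranteed by Proposition \ref{propos-estratific} since $fR=(F,G,H)$ visibly has each coordinate divisible by the corresponding variable. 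I would close by remarking that the permutation symmetry in $m,n,p$ together with the hypothesis $m\ge n\ge p$ is what makes the table's ordering of eigenvalues canonical.
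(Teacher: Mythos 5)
Your overall route is the same as the paper's: the paper offers no written argument for this theorem beyond the symbolic computation itself (zeros of (\ref{system-u-v-caso-su}), Jacobians, eigenvalues, carried out in \emph{Mathematica}), and your organization of that computation --- exploiting the factors $x(2x-1)$ and $y(2y-1)$ to isolate the boundary equilibria, identifying $N,R,S,T$ with Kimura's four invariant Einstein metrics via Proposition \ref{prop:simplex-projection}, and using the $(m,n,p)$-permutation symmetry of Lemma \ref{lema-simetrias} to deduce the data at $S,T$ from that at $R$ --- is sound and consistent with what the paper does. (A minor quibble: the cofactors of $x(2x-1)$ and $y(2y-1)$ are quadratic, not cubic.)

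There is, however, one genuine error in your sign analysis at $N$, which is precisely the step that justifies the ``repeller'' entry. You assert that positivity of both eigenvalues ``follows since $\mathrm{tr}>0$ and $\det<0$.'' For a real $2\times 2$ matrix, $\det=\lambda_1\lambda_2<0$ forces real eigenvalues of \emph{opposite} sign, i.e.\ a hyperbolic saddle --- exactly the conclusion you must rule out at $N$. The correct fact is $\det DY(N)>0$. Writing the displayed eigenvalues as $\lambda_{1,2}(N)=\bigl(T\mp\sqrt{D}\bigr)/\bigl(4(m+n+p)^2\bigr)$ with $T=m^2(n+p)+m(n+p)^2+np(n+p)$, one checks that $T=(m+n)(m+p)(n+p)$ and that the bracket under the radical equals $T-8mnp$, so $D=T(T-8mnp)$; hence $\lambda_1\lambda_2\propto T^2-D=8mnpT>0$ and $\lambda_1+\lambda_2\propto 2T>0$, while $D\ge 0$ follows from $(m+n)(m+p)(n+p)\ge 8mnp$ (AM--GM, with equality iff $m=n=p$). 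Thus both eigenvalues are real and strictly positive and $N$ is a repelling node. With that correction the proposal matches the paper's (computational) proof.
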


\begin{figure}[ht]
\centering
\def\svgwidth{11cm}
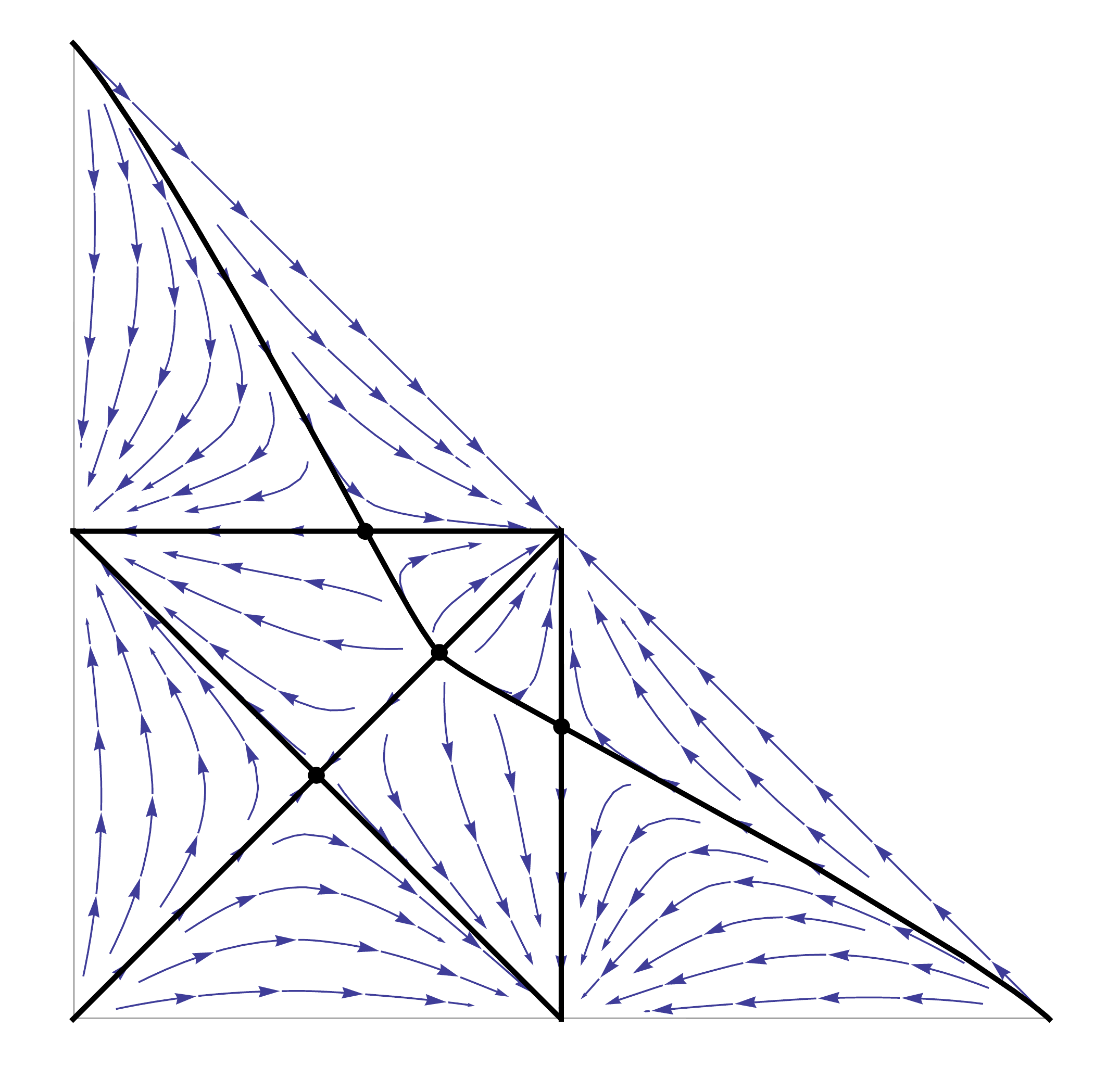
\caption{\label{sing-su-n}
Projected Ricci flow of Type II.}
\end{figure}

\begin{remark}
From Theorem \ref{teo-sing-su-n} one can describe the singularities of the projected Ricci flow equations (\ref{system-u-v-caso-su}) in a very nice way (see Figure \ref{sing-su-n}): it is clear that the singularity $S$ is always in the segment ${LM}$ (supported on the line $x=\frac{1}{2}$), $S$ is always in the segment ${KL}$ (supported on the line $y=\frac{1}{2}$) and $R$ is always in the segment ${KM}$ (supported on the line $x+y=\frac{1}{2}$). Moreover, the point $N$ is always inside the triangle $KLM$. To see this, just note that $\frac{m+n}{2 (m+n+p)}<\frac{1}{2}$, $\frac{m+p}{2 (m+n+p)}<\frac{1}{2}$ and $\frac{m+n}{2 (m+n+p)} +\frac{m+p}{2 (m+n+p)} > \frac{1}{2}$.
\end{remark}

\begin{proposicao}
The segments ${KL}$, ${LM}$, ${MK}$ are invariant by the projected Ricci flow  given by equation \eqref{system-u-v-caso-su}. See Figure \ref{sing-su-n}. 
\end{proposicao}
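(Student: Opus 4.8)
The plan is to exploit the symmetry Lemma \ref{lema-simetrias} together with the fact, established in Proposition \ref{propos-estratific}, that the coordinate hyperplanes $\Pi_i = \{x_i = 0\}$ are invariant under the flow of $fR$ (hence each face of the closed simplex $\mathrm{cl}({\cal T})$ is invariant). The segments $KL$, $LM$, $MK$ in ${\cal S}$ correspond, under $P^{-1}$, to the intersections of $\mathrm{cl}({\cal T})$ with the three planes $\{x = \tfrac12\}$, $\{y = \tfrac12\}$, $\{z = \tfrac12\}$. So it suffices to show that each of these three planes is invariant under the flow of $X = (A,B,C)$ on $\mathrm{cl}({\cal T})$, and then transport the statement to ${\cal S}$ via the linear conjugacy $Y = P\circ X\circ P^{-1}$ (Proposition \ref{prop:simplex-projection}).

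For the plane $\{x = \tfrac12\}$: on $\mathrm{cl}({\cal T})$ we have $x + y + z = 1$, so $x = \tfrac12$ is equivalent to $y + z = \tfrac12$, i.e. $y - z$ being the only free direction. The cleanest route is to observe that the Ricci vector field $R$, and hence $fR$ with $f = 2xyz(m+n+p)$, and hence $X$, commutes with the linear involution $T_x$ that swaps the roles forcing $x\mapsto x$ while interchanging $y$ and $z$ — provided one simultaneously swaps the structure constants in a way that fixes this flag manifold. More directly, since we only need invariance of one plane (not a symmetry of the whole field), I would simply verify that the affine function $\ell(x,y,z) = 2x - 1$ satisfies $X(\ell) = \mu\,\ell$ for some polynomial $\mu$ on $\mathrm{cl}({\cal T})$; equivalently that $A(x,y,z)$ is divisible by $(2x-1)$ after restricting to $x+y+z=1$. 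This is visible already in the formula for $u(x,y)$ in \eqref{system-u-v-caso-su}, which contains the explicit factor $(2x-1)$: thus $u$ vanishes identically on $\{x = \tfrac12\}\cap{\cal S}$, i.e. the line $x = \tfrac12$ (which is exactly the segment $LM$, since $L = (\tfrac12,\tfrac12)$ and $M = (\tfrac12,0)$) is invariant for $Y$. Symmetrically, $v(x,y)$ in \eqref{system-u-v-caso-su} carries the factor $(2y-1)$, so the line $y = \tfrac12$ (the segment $KL$, with $K = (0,\tfrac12)$) is invariant.

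For the third segment $MK$, which lies on the line $x + y = \tfrac12$: here I would compute $u(x,y) + v(x,y)$ restricted to $x + y = \tfrac12$. In terms of the original coordinates, $x + y = \tfrac12$ on $\mathrm{cl}({\cal T})$ means $z = \tfrac12$, and one expects $C(x,y,z)$ — equivalently $-(A+B)$ since $X$ is tangent to ${\cal T}$ and $A+B+C = 0$ there — to carry a factor $(2z - 1) = -(2x + 2y - 1)$. Concretely, one checks that $u(x,y) + v(x,y)$, as a polynomial, is divisible by $(2x + 2y - 1)$, so that the derivative of $x + y$ along $Y$ vanishes on $\{x+y=\tfrac12\}$; hence that line is invariant. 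Each of these three divisibility facts is a short symbolic computation in the displayed polynomials, most of which is already exhibited by the factored forms in \eqref{system-u-v-caso-su}.

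The main obstacle, such as it is, is purely bookkeeping: making sure the three segments of ${\cal S}$ are correctly identified with the lines $x=\tfrac12$, $y=\tfrac12$, $x+y=\tfrac12$ (using the stated coordinates of $K,L,M$), and confirming the relevant divisibility for the $MK$ case, which is the one whose factor is not already displayed in \eqref{system-u-v-caso-su} and must be extracted from $u+v$ (equivalently from $C\circ P^{-1}$). Once the factor $(2x+2y-1)$ is exhibited, invariance of all three segments follows, and Lemma \ref{lema-simetrias} or direct continuity of solutions (as in Proposition \ref{propos-estratific}) upgrades this to flow-invariance of the closed segments including their endpoints $K,L,M$.
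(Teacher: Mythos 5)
Your proposal is correct and follows essentially the same route as the paper: the paper also proves invariance by checking that the component of $(u,v)$ normal to each supporting line vanishes there, carrying out the computation explicitly for $MK$ (dotting with the normal $(1,1)$ along $x+y=\tfrac12$) and noting the other two cases are analogous. Your observation that the displayed factors $(2x-1)$ and $(2y-1)$ in \eqref{system-u-v-caso-su} already settle the segments $LM$ and $KL$ is a tidy shortcut, but it is the same argument.
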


%\begin{proposicao}
%Consider the projected Ricci flow (\ref{system-u-v-caso-su}) and its corresponding singularities (see Figure \ref{sing-su-n})
%Then the segments ${KL}$, ${LM}$, ${MK}$ are invariant by the projected Ricci flow. 
%\end{proposicao}
\begin{proof}
Let us give an explicit proof for the segment $KM$. The other segments follow in a similar way. Since the segment $KM$ is supported by the line $x+y=1/2$, it has $(1,1)$ as a normal vector. The components of the vector field along the line $x+y=1/2$ are given by
\begin{eqnarray*}
u(x,1/2-x)&=& -x (2 x-1) \left(-\frac{1}{2} m \left(4 \left(\frac{1}{2}-x\right)-1\right)+n \left(4 \left(\frac{1}{2}-x\right)+4 x-3\right) \left(\frac{1}{2}-x\right)\right.  \\
&+&\left.   p \left(\left(1-2 \left(\frac{1}{2}-x\right)\right)^2+\left(4 \left(\frac{1}{2}-x\right)-1\right) x\right)\right)
\end{eqnarray*}
\begin{eqnarray*}
v(x,1/2-x)&=& \left(2 \left(\frac{1}{2}-x\right)-1\right) \left(x-\frac{1}{2}\right) \left(-\frac{1}{2} m (4 x-1)   \right. \\
&+& \left. n \left(4 x^2+4 \left(-x-\frac{1}{2}\right) x+x+\frac{1}{2}\right)+p \left(4 \left(\frac{1}{2}-x\right)+4 x-3\right) x\right)
\end{eqnarray*}
A straightforward computation yields
\[(u(x,1/2-x),v(x,1/2-x)) \cdot (1,1)=0\]
and therefore the segment $KM$ is invariant under the flow.
\end{proof}

\begin{exemplo}
Let us consider the flag manifold $SU(4)/S(U(2)\times U(1)\times U(1))$. In this case, we have the following projected Ricci flow
$$
\left\{
\begin{array}{lll}
x' &=& x \left(x^2 (6-32 y)+x \left(-32 y^2+50 y-9\right)+16 y^2-17 y+3\right)\\
y' &=&  -y (2 y-1) \left(16 x^2+x (16 y-17)-3 y+3\right)
\end{array}
\right.
$$
The dynamics of this system is described in Figure \ref{sing-su-n}.
\end{exemplo}

\subsubsection{\label{sec-su-n-gromov}
Gromov-Hausdorff convergence}

We now describe some geometric consequences of the global behavior of the projected Ricci flow by taking into account the phase portrait of the projected Ricci flow (see the regions $R_i$ in Figure \ref{sing-su-n}).
Given an invariant initial metric $g_0$ on the flag manifold  $\mathbb{F}=SU(m+n+p)/S(U(m)\times U(n)\times U(p))$, we now use Theorem \ref{thm:collapse} to understand the metric limit:  \[\lim_{t\to \infty}(\mathbb F,d_{g_t})= (\mathbb{F}_\infty,d)\]

Theorem \ref{thm:collapse} guarantees that the metric limit only depends on the limiting bilinear form $g_i\to g$, therefore $(\bb F_\infty, d)$ is completely determined by the limiting points $K, L, M, O, P, Q$ and the bracket structure of $\lie g$.

Let $\mathfrak{g}$ be the Lie algebra of $SU(m+n+p)$ and consider its reductive decomposition
$\mathfrak{g}=\mathfrak{k}\oplus \mathfrak{m}$.
Recall that the isotropy representation of $\mathbb{F}=SU(m+n+p)/S(U(m)\times U(n)\times U(p))$ decomposes into three irreducible components
$$
\mathfrak{m}=\mathfrak{m}_{1}\oplus \mathfrak{m}_{2} \oplus \mathfrak{m}_{3},
$$
where $\mathfrak m_1=\lie m_{12},\lie m_2=\lie m_{23}$ and $\lie m_{13}=\lie m_3$ are as in \cite{itoh}.  The Lie brackets satisfy
\begin{equation}\label{colchete-su}
\begin{array}{lll}
[\mathfrak{m}_{1},\mathfrak{m}_{1}]\subset \mathfrak{k}, & [\mathfrak{m}_{2},\mathfrak{m}_{2}]\subset \mathfrak{k}, & [\mathfrak{m}_{3},\mathfrak{m}_{3}]\subset \mathfrak{k}, \\

[\mathfrak{m}_{1},\mathfrak{m}_{2}]= \mathfrak{m}_{3} , & [\mathfrak{m}_{1},\mathfrak{m}_{3}]= \mathfrak{m}_{2} , & [\mathfrak{m}_{2},\mathfrak{m}_{3}]= \mathfrak{m}_{1}.
\end{array}
\end{equation}
Recall from Section \ref{sec:gh} that
$$
\m_0=\ker g
\qquad
\h =  \k \oplus \text{Lie algebra generated by }\m_0
$$
A straightforward calculation yields the following.

\begin{lema}
Let $\mathbb{F}=SU(m+n+p)/S(U(m)\times U(n)\times U(p))$ be a flag manifold, and denote by $\mathfrak{g}$ the Lie algebra of $SU(m+n+p)$. Consider the decomposition  $\mathfrak{g}=\mathfrak{m}_{1}\oplus \mathfrak{m}_{2} \oplus \mathfrak{m}_{3}\oplus \mathfrak{k}$. Then the metric limits are as follows
\begin{equation}\label{fibracao1}
	\begin{tabular}{c|c|c|c|c}
		
		Region  &  Limit & $\mathfrak{m}_0$            &  $\mathfrak{h}$   & $G/H$\\
		\hline\hline
		$R_1, R_3,R_4$   &  K     & $\mathfrak{m}_{1}$            & $\mathfrak{k}\oplus\mathfrak m_{1}$  & $Gr_{m+n}(\mathbb{C}^{m+n+p})$\\
		\hline
		$R_2,R_5,R_6,R_9$   &  L     & $\mathfrak{m}_{3}$            & $\mathfrak{k}\oplus\mathfrak m_{3}$  & $Gr_{m+p}(\mathbb{C}^{m+n+p})$\\
		\hline
		$R_7,R_8,R_{10}$   &  M     & $\mathfrak{m}_{2}$            & $\mathfrak{k}\oplus\mathfrak m_{2}$  & $Gr_{n+p}(\mathbb{C}^{m+n+p})$\\
		\hline
		$-R_3,-R_8$   &  O     & $\mathfrak{m}_{1}\oplus \mathfrak m_{2}$ & $\mathfrak g$  & point\\
		\hline
		$-R_1,-R_2$   &  P     & $\mathfrak{m}_{1}\oplus \mathfrak m_{3}$ & $\mathfrak g$  & {point}\\
		\hline
		$-R_9,-R_{10}$   &  Q     & $\mathfrak{m}_{2}\oplus \mathfrak m_{3}$ & $\mathfrak g$  & point\\
		\hline
	\end{tabular}
\end{equation}
where $Gr_s(\mathbb{C}^r)$ represents the Grassmann manifold of $s$-planes inside $\mathbb{C}^r$ with the normal metric and $-R_i$ stands for the backwards projected flow starting in the region $R_i$.
\end{lema}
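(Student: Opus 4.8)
The plan is to trace each singularity of the projected Ricci flow appearing in the phase portrait (Figure~\ref{sing-su-n}) back to a limiting bilinear form on $\m$, and then to apply Theorem~\ref{thm:collapse} directly. The first step is to invert the chain of normalizations: an equilibrium $(u_*,v_*)\in{\cal S}$ corresponds to a point $(x_*,y_*,z_*)\in\mathrm{cl}({\cal T})$ with $x_*+y_*+z_*=1$, which in turn corresponds to a degenerate $\Ad_G(K)$-invariant bilinear form $g=x_* B_1+y_* B_2+z_* B_3$ on $\m$ (the case $x_*=0$, $y_*=0$, or $z_*=0$ being exactly the vertices/edges on which the flow has its attractors $K,L,M$ and repellers $O,P,Q$, up to the backward flow). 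For each of the six entries I would read off $\m_0=\ker g$ as the sum of those $\m_i$ whose coefficient vanishes: for $K=(0,\tfrac12)$ we get $z_*\neq0$, $x_*\neq0$, $y_*=0$, so since $\m_1=\lie m_{12}$, $\m_2=\lie m_{23}$, $\m_3=\lie m_{13}$ in the notation of \cite{itoh}, the vanishing summand is $\m_1$; and similarly $L\leftrightarrow\m_3$, $M\leftrightarrow\m_2$, $O\leftrightarrow\m_1\oplus\m_2$, $P\leftrightarrow\m_1\oplus\m_3$, $Q\leftrightarrow\m_2\oplus\m_3$. (This requires checking which coordinate axis each of $K,L,M,O,P,Q$ lies on, which is immediate from Theorem~\ref{teo-sing-su-n} and the identifications in \eqref{system-u-v-caso-su}; one must also verify that the backward limits $-R_i$ indeed land on the vertices $O,P,Q$, which is the content of the phase portrait.)

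The second step is to compute $\h=\k\oplus(\text{Lie algebra generated by }\m_0)$ in each case using the bracket relations \eqref{colchete-su}. When $\m_0$ is a single summand, say $\m_0=\m_1$, then $[\m_1,\m_1]\subset\k$, so the subalgebra generated by $\m_1$ is just $\m_1$ itself and $\h=\k\oplus\m_1$; then $G/H$ is the quotient of $SU(m+n+p)$ by the centralizer-type subgroup with Lie algebra $\k\oplus\m_1=\mathfrak s(\mathfrak u(m+n)\oplus\mathfrak u(p))$ (reading $\m_1=\lie m_{12}$ as the off-diagonal block coupling the first two factors), which is the complex Grassmannian $Gr_{m+n}(\C^{m+n+p})$; the analogous computations give $Gr_{m+p}$ for $\m_0=\m_3$ and $Gr_{n+p}$ for $\m_0=\m_2$. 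When $\m_0$ is a sum of two summands, say $\m_0=\m_1\oplus\m_2$, then $[\m_1,\m_2]=\m_3$ by \eqref{colchete-su}, so the generated subalgebra contains all of $\m_1\oplus\m_2\oplus\m_3=\m$, whence $\h=\k\oplus\m=\g$, $H=G$, and $G/H$ is a point; the other two double cases are identical by symmetry. Finally, since $H$ is closed (it is a centralizer of a torus in the first three rows, and $G$ itself in the last three) the hypotheses of Theorem~\ref{thm:collapse} are met, and it yields $\lim_{t\to\infty}(\F,d_{g_t})=(G/H,d_F)$; in the Grassmannian cases one checks that $g|_\n$ is the (multiple of the) Cartan--Killing form restricted to $\n$, hence $\Ad_G(H)$-invariant and orthogonal to $\m\cap\h$, so by the Corollary following Theorem~\ref{thm:collapse} the Finsler norm $F$ is the normal Riemannian metric, justifying the phrase "with the normal metric"; in the point cases $G/H$ is a point and there is nothing more to say.

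The main obstacle is the bookkeeping in the third step: correctly matching the abstract summands $\m_1=\lie m_{12}$, $\m_2=\lie m_{23}$, $\m_3=\lie m_{13}$ with the concrete off-diagonal blocks of $\mathfrak{su}(m+n+p)$ so that $\k\oplus\m_i$ is identified with the right block-diagonal subalgebra and hence with the correct Grassmannian $Gr_s(\C^{m+n+p})$ — getting the index $s\in\{m+n,m+p,n+p\}$ right in each row. Everything else (inverting the normalizations, reading off $\ker g$, applying \eqref{colchete-su}, invoking Theorem~\ref{thm:collapse} and its Corollary) is routine once the identification conventions of \cite{itoh} are fixed; the only genuinely geometric input is the Corollary after Theorem~\ref{thm:collapse}, which upgrades the a priori Finsler limit to the normal Riemannian metric on the Grassmannian.
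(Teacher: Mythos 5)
Your proposal is correct and follows essentially the same route as the paper: read off the degenerate limiting bilinear form at each singularity, compute $\m_0=\ker g$ and then $\h$ from the bracket table \eqref{colchete-su}, identify $G/H$ with the appropriate Grassmannian or point, and use the fact that the surviving coefficients of the limit metric coincide (so it is normal homogeneous) together with the corollary to Theorem \ref{thm:collapse} to upgrade the Finsler limit to the normal metric. One small slip: for $K=(0,\tfrac12)$ the vanishing coordinate is $x_*=0$ with $y_*=z_*=\tfrac12$, not $y_*=0$ as you wrote --- your conclusion $\m_0=\m_1$ is nevertheless the correct one under the convention $x\leftrightarrow\m_1$, and all six of your final identifications agree with the paper's table.
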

\begin{proof}
	We are interested in investigating the limiting (sub-Riemannian) metric at each point.  Explicitly, we have (see Figure \ref{sing-su-n})
	\begin{equation}\label{table-degen1}
	\begin{array}{c|c}
	\text{Singularity} & \text{Corresponding degenerate metric}\\ \hline\hline
	K = (0,\frac{1}{2}) & (0, \frac{1}{2}, \frac{1}{2})\\ \hline
	L = (\frac{1}{2},\frac{1}{2} ) & (\frac{1}{2}, \frac{1}{2}, 0)\\\hline 
	M = (\frac{1}{2}, 0) & (\frac{1}{2}, 0, \frac{1}{2})\\ \hline
	O = (0, 0) & (0, 0, 1)\\ \hline
	P = (0,1) & (0, 1, 0)\\ \hline
	Q = (1,0 ) & (1, 0, 0)\\ \hline
	\end{array}
	\end{equation}
	
The Lemma follows by a direct computation using \eqref{colchete-su} (recalling that the bracket of  $\Ad_G(K)$-invariant subspaces is again $\Ad_G(K)$-invariant),  observing that 
%$(\mathfrak g, \mathfrak m_{i}\oplus \mathfrak k)$  is a \textit{symmetric pair} and that
the limiting metric is normal homogeneous (i.e., all multiplying factors in \eqref{eq-compon-metr} coincide).
\end{proof}

There is a simple  geometric interpretation for the collapses under the light of Theorem \ref{thm:collapse} as follows.

The first three rows of (\ref{fibracao1}) can be represented as a homogeneous fibrations
\begin{equation}\label{homo-fibracao1}
H/K\to \mathbb{F}=G/K \to G/H
\end{equation}
where the $\mathfrak{m}_{i}$ component of $\mathfrak{h}$ is tangent to the fiber and the other two remaining components of $\mathfrak m$ can be seem both as the \textit{horizontal space} (i.e., the space orthogonal to the fibers) of the fibration or as the tangent to the base. One then has a Riemannian submersion where the limit is given by shrinking its fibers.  Moreover, the fibration (\ref{homo-fibracao1}) has an intuitive geometric interpretation: for instance, the second row is recovered by recalling that $SU(m+n+p)/S(U(m)\times U(n)\times U(p))$ is the manifold of flags of the form $\{ 0\subset V^p\subset V^{n+p}\subset \mathbb{C}^{m+n+p} \}$. Therefore the fibration (\ref{homo-fibracao1}) is just the projection of a flag on a corresponding subspace. For instance, for the second row of (\ref{fibracao1}) we have the projection 
$$\{ 0\subset V^p\subset V^{n+p}\subset \mathbb{C}^{m+n+p} \} \mapsto V^{n+p}.$$
For the third row in table (\ref{fibracao1}), we consider the equivalent (diffeomorphic) flag manifold $SU(m+n+p)/S(U(m)\times U(p)\times U(n))$ and so on. \\

As for the last three rows of (\ref{fibracao1}), $\mathfrak m_{i}\oplus \mathfrak m_{j}$ can be seen as the horizontal space of the corresponding Riemannian submersion of the previous paragraph. In these cases, however,  the reverse flow  shrinks the base, instead of the fibers. Since the horizontal space is completely non-integrable (i.e., its iterated bracket generates the full tangent space), the whole $G/K$ collapses.

We conclude:

\begin{teorema}\label{teo-ricci-su-1}
Consider the flag manifold $\mathbb{F} = SU(m+n+p)/S(U(m)\times U(n)\times U(p))$. Then the limiting behavior of the projected Ricci flow  is given by Figure  \ref{sing-su-n}. In particular \info{alterado K\"ahler e non-K\"ahler }
\begin{enumerate}
\item the K\"ahler Einstein metrics ($R$, $S$ and $T$) are hyperbolic saddles,
\item the non-K\"ahler Einstein metric ($N$) is a repeller,
\item if the metric $g_0$ belongs to $R_1$, $R_3$ or $R_4$ then $\mathbb{F}_\infty= (Gr_{m+n}(\mathbb{C}^{m+n+p}), g_{\rm normal})$,
\item if the metric $g_0$ belongs to $R_2$, $R_5$, $R_6$ or $R_9$ then $\mathbb{F}_\infty= (Gr_{m+p}(\mathbb{C}^{m+n+p}), g_{\rm normal})$,
\item if the metric $g_0$ belongs to $R_7$, $R_8$ or $R_{10}$ then $\mathbb{F}_\infty= (Gr_{n+p}(\mathbb{C}^{m+n+p}), g_{\rm normal})$,
\item if the metric $g_0$ lies outside the triangle delimited by $L$, $T$, $K$, $R$, $M$ and the flow lines connecting them, then $\mathbb{F}_{-\infty}=$ point,
\end{enumerate}
 where $\mathbb{F}_{\pm \infty} = \displaystyle\lim_{t\to \pm\infty} (\mathbb{F}, g_t)$, $g_t$ is the projected Ricci flow with initial condition $g_0$ and the convergence is in Gromov-Hausdorff sense.
\end{teorema}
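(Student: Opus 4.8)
The plan is to assemble Theorem \ref{teo-ricci-su-1} from the machinery already in place: the dynamical picture of the projected Ricci flow established in Theorem \ref{teo-sing-su-n} (singularities, their types, and the invariant segments $KL$, $LM$, $MK$), and the collapsing Theorem \ref{thm:collapse} together with the explicit table of metric limits in the preceding Lemma. Items (1) and (2) are immediate restatements of the line-by-line data in Theorem \ref{teo-sing-su-n}: the three K\"ahler-Einstein singularities $R$, $S$, $T$ have Jacobian eigenvalues of opposite sign, hence are hyperbolic saddles, and $N$ has two positive eigenvalues, hence is a repeller. So the real content is the identification of the Gromov-Hausdorff limits in (3)--(6).

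First I would establish the qualitative phase portrait: the open simplex $\mathcal S$ is partitioned into the regions $R_1,\dots,R_{10}$ of Figure \ref{sing-su-n}, bounded by the invariant segments $KL$, $LM$, $MK$, by the three boundary edges of the simplex (which are invariant since $fR$ is tangent to each coordinate hyperplane, Proposition \ref{propos-estratific}), and by the stable/unstable manifolds of the saddles $R$, $S$, $T$ and of $N$. Using that the projected flow has no non-trivial periodic orbits (Corollary \ref{corol-rescaling}) and the standard Poincar\'e--Bendixson-type argument in the plane, every forward orbit starting in a given region limits to a single equilibrium; reading off the sink structure (the attractors $K$, $L$, $M$ on the invariant segments, the repeller vertices $O$, $P$, $Q$, and the repeller $N$) one matches each region $R_i$ to its forward limit point among $\{K,L,M\}$ and each $-R_i$ to its backward limit among $\{O,P,Q\}$ — this is exactly the bookkeeping recorded in column one of the Lemma's table \eqref{fibracao1}. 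The forward limit of an orbit in $R_1,R_3,R_4$ is $K$; in $R_2,R_5,R_6,R_9$ it is $L$; in $R_7,R_8,R_{10}$ it is $M$; and outside the curvilinear triangle $L$-$T$-$K$-$R$-$M$ the backward limit is one of $O,P,Q$.

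Next I would convert each limiting point of the projected flow into a limiting degenerate bilinear form on $\m$ via $P^{-1}$ and the normalization $x+y+z=1$, obtaining table \eqref{table-degen1}; then invoke Theorem \ref{thm:collapse}. For a limit like $L$, corresponding to $(x,y,z)\to(\tfrac12,\tfrac12,0)$, one has $\m_0=\m_3$, and by the bracket relations \eqref{colchete-su} the Lie algebra generated by $\k\oplus\m_3$ is $\k\oplus\m_3$ itself (since $[\m_3,\m_3]\subset\k$), so $\h=\k\oplus\m_3$ and $G/H=Gr_{m+p}(\mathbb C^{m+n+p})$; moreover the surviving metric $g|_\n$ has all factors equal, i.e. it is the normal metric, and since $g(\n,\m\cap\h)=0$ and $g|_\n$ is $\Ad_G(H)$-invariant, the Corollary to Theorem \ref{thm:collapse} gives that the Finsler limit is exactly the Riemannian normal metric on the Grassmannian. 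The analogous computation handles $K$ and $M$. For the vertex limits $O,P,Q$ one has $\m_0=\m_i\oplus\m_j$; by \eqref{colchete-su} the brackets $[\m_i,\m_j]$ generate the third summand and then everything, so the generated subalgebra is all of $\g$, hence $H=G$ and $G/H$ is a point — giving (6). Collecting these identifications with the region-to-limit matching from the phase portrait yields (3)--(5) and (6), completing the proof.

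The main obstacle I anticipate is item (6) and the precise delimitation of the basins: one must argue carefully that the backward-time basin of the point-collapse is exactly the complement of the closed curvilinear triangle with vertices $L,T,K,R,M$ and edges the connecting flow lines — i.e. that the stable manifolds of the saddles $R$, $S$, $T$ together with the invariant segments cut $\mathcal S$ into precisely the claimed regions, with no extra recurrent behavior. This rests on a global analysis of the separatrix configuration of the planar polynomial vector field \eqref{system-u-v-caso-su}, which in practice is verified by the phase-portrait computation underlying Figure \ref{sing-su-n}; the absence of periodic orbits (Corollary \ref{corol-rescaling}) and of more exotic limit sets in the plane is what makes this rigorous, but pinning down that the separatrices land on the equilibria claimed (rather than, say, spiralling) is the delicate point. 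The bracket computations feeding Theorem \ref{thm:collapse}, by contrast, are routine given \eqref{colchete-su}.
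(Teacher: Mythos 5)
Your proposal is correct and follows essentially the same route as the paper: items (1)--(2) are read off from Theorem \ref{teo-sing-su-n}, the region-to-attractor matching comes from the phase portrait together with the invariant segments and the absence of periodic orbits, and items (3)--(6) follow by feeding the limiting degenerate forms of table \eqref{table-degen1} into Theorem \ref{thm:collapse} and the bracket relations \eqref{colchete-su}, exactly as in the paper's Lemma preceding the theorem. The delicate point you flag (the global separatrix configuration behind Figure \ref{sing-su-n}) is likewise handled in the paper by the symbolic/numerical computation of the planar polynomial field, backed by Corollary \ref{corol-rescaling}.
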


\subsection{$SO(2\ell)/U(1)\times U(\ell-1)$, $\ell \geq 4$.} \label{sec-so-L}
In this section we will discuss the case of the flag manifold of type $D_\ell$ with tree isotropy summands, namely $SO(2\ell)/U(1)\times U(\ell-1)$, $\ell \geq 4$. The isotropy representation of this flag manifold decomposes into three irreducibles submodules $\mathfrak{m}_1$, $\mathfrak{m}_2$, $\mathfrak{m}_3$ with dimensions $2(\ell-1)$, $2(\ell-1)$ and $(\ell-1)(\ell-2)$, respectively.

By \cite{itoh}, the Lie bracket between the isotropy summands are given by
\begin{equation}\label{colchete-SO2L-U1-UL1}
\begin{array}{lll}
[\mathfrak{m}_{1},\mathfrak{m}_{1}]\subset \mathfrak{k}, & [\mathfrak{m}_{2},\mathfrak{m}_{2}]\subset \mathfrak{k}, & [\mathfrak{m}_{3},\mathfrak{m}_{3}]\subset \mathfrak{k}, \\

[\mathfrak{m}_{1},\mathfrak{m}_{2}]= \mathfrak{m}_{3} , & [\mathfrak{m}_{1},\mathfrak{m}_{3}]=\mathfrak{m}_{2}, &
[\mathfrak{m}_{2},\mathfrak{m}_{3}]=\mathfrak{m}_{1} . 
\end{array}
\end{equation}

Each element in this family of flag manifolds admits 4 invariant Einstein metrics (up to scale): three of them are Einstein-K\"ahler metric and the other one is non-K\"ahler (see \cite{kimura} for details). 

It is worth pointing out that this is exact the same number (and type) of invariant Einstein metrics in the family $SU(m+n+p)/S(U(m)\times U(n) \times U(p))$ (see \ref{sec-su-n}). As we will see in this section, the global behavior of the dynamical system associated to the Ricci flow for flags of $SO(2\ell)$ is also similar to the one described for flags of $SU(n)$. 

Since the computations are very similar to the previous sections we will omit some details. As before, we denote an invariant metric $g$ by the triple of positive real numbers $(x,y,z)$. The components of the Ricci operator of the invariant metric $g$ can be computed by the methods in \cite{anastassiou-chrysikos} and are given by
\begin{eqnarray*}
r_x&=&\frac{(\ell-2)}{8 (\ell-1)}  \left( +\frac{x}{y z}
-\frac{y}{x z}-\frac{z}{x y}\right) +\frac{1}{2 x} \\ \\
r_y&=& \frac{(\ell-2)}{8 (\ell-1)} \left(-\frac{x}{y z}+\frac{y}{x z}-\frac{z}{x y}\right) +\frac{1}{2 y}\\ \\
r_z&=& \frac{1}{4 (\ell-1)}  \left( -\frac{x}{y z}-\frac{y}{x z}+\frac{z}{x y}  \right)+\frac{1}{2 z}
\end{eqnarray*}
For the projected Ricci flow, we proceed as in the previous sections. We start with the auxiliary functions $F, G, H$ given by 
\begin{eqnarray*}
F(x,y,z) &=& -x \left((\ell-2)(+x^2-y^2-z^2) +4 (\ell-1)yz\right)
\\ 
G(x,y,z) &=& -y \left((\ell-2)(-x^2+y^2-z^2) + 4 (\ell-1) xz\right) \\
H(x,y,z) &=& -z \left(\phantom{\ell-2\,}2(-x^2-y^2 + z^2) +4(\ell -1) x y\right)
%F(x,y,z)&=& x \left(-(\ell-2) x^2+(\ell-2) y^2+(\ell-2) z^2 -4 (\ell-1)yz\right)
%\\ 
%G(x,y,z) &=& y \left(+(\ell-2) x^2-(\ell-2)y^2+(\ell-2)z^2  -4 (\ell-1) xz\right) \\
%H(x,y,z) &=& z \left(2x^2+2y^2-2z^2 -4(\ell -1) x y\right)
%F(x,y,z)&=& x \left(-(\ell-2) x^2+(\ell-2) y^2-4 (\ell-1) y z+(\ell-2) z^2\right) \\
%G(x,y,z) &=& y \left((\ell-2) x^2-4 (\ell-1) x z-(\ell-2) \left(y^2-z^2\right)\right) \\
%\\ H(x,y,z) &=& -2 z \left(2 (\ell -1) x y-x^2-y^2+z^2\right)
\end{eqnarray*}

Computing the vector field $(A,B,C)$ determined by Equation (\ref{campoABC}), we get
\begin{eqnarray*}
A(x,y,z)&=& x \left((\ell-2) x^3-x^2 (\ell y+\ell-2 y+2 z-2)-x \left((\ell-2) y^2-12 (\ell-1) y z+(\ell-2) z^2\right) \right. \\
&&\left. +(\ell-2) y^3+y^2 (\ell-2 (z+1))+y z (2 (z+2)-\ell (z+4))+z^2 (\ell+2 z-2)\right) \\ \\
B(x,y,z) &=& y \left((\ell-2) x^3+x^2 (\ell (-y)+\ell+2 y-2 z-2)-x \left((\ell-2) y^2-12 (\ell-1) y z \right. \right. \\ 
&& \left. \left. +\ell z (z+4)-2 z (z+2)\right)+\left(y^2-z^2\right) (\ell (y-1)-2 (y+z-1))\right)\\ \\
C(x,y,z) &=& z \left((\ell-2) x^3+x^2 (-(\ell-2) y-2 z+2)-x \left((\ell-2) y^2-4 (\ell-1) y (3 z-1) \right. \right.  \\
&&\left. \left. +(\ell-2) z^2\right) \left  (y^2-z^2\right) ((\ell-2) y-2 z+2)\right). 
\end{eqnarray*}
We then get the corresponding {\em projected Ricci flow}
\begin{equation} \label{system-u-v-caso-so}
\left\{
\begin{array}{lll}
u(x,y)&=&-x (2 x-1) \left(\ell \left(x (8 y-1)+8 y^2-7 y+1\right)-4 y (2 x+2 y-1)\right)\\
v(x,y)&=&-y (2 y-1) \left(\ell \left(8 x^2+x (8 y-7)-y+1\right)-4 x (2 x+2 y-1)\right)
\end{array}
\right.
\end{equation}
For the result below we computed its singularities and the corresponding eigenvalues $\lambda_1$, $\lambda_2$ of its Jacobian.

\begin{teorema}\label{teo-sing-so}
Consider the flag manifold $SO(2\ell)/(U(1)\times U(\ell-1))$, $\ell \geq 4$, and its corresponding projected Ricci flow equations (\ref{system-u-v-caso-so}). We have \info{foi alterado Kahler e non Kahler}

\bigskip

\hspace{-36pt}
\begin{tabular}{|c|c|c|c|c|}
\hline 
Singularity & Type of metric & $\lambda_1$ & $\lambda_2$ & Type of singularity \\ 
\hline 
$O=(0,0)$ & degenerate & $\ell$ & $\ell$ & repeller \\ \hline
$P=(0,1)$ & degenerate & $\ell$ & $\ell$ & repeller \\ \hline
$Q=(1,0)$ & degenerate & $\ell$ & $2(\ell-2)$ & repeller \\  \hline
$K=(0,\frac{1}{2})$ & degenerate & $-\frac{\ell}{2}$ & $-\frac{\ell}{2}$ & attractor \\ \hline 
$L=(\frac{1}{2},\frac{1}{2})$ & degenerate & $2-\ell$ & $2-\ell$ & attractor \\ \hline
$M=(\frac{1}{2},0)$ & degenerate & $-\frac{\ell}{2}$ & $-\frac{\ell}{2}$ & attractor \\ \hline
$N=( \frac{\ell}{4 (\ell-1)},\frac{\ell}{4 (\ell-1)})$  &  Einstein non-K\"ahler & $\frac{(\ell-2) \ell}{2 (\ell-1)^2}$ & $\frac{(\ell-2)^2 \ell}{4 (\ell-1)^2}$ & repeller \\ \hline 
$R=(\frac{1}{4},\frac{1}{4})$ & K\"ahler Einstein & $-\frac{1}{2}$ & $\frac{\ell}{4}$ & hyperbolic saddle \\ \hline 
$S=(\frac{\ell}{6 \ell-8},\frac{1}{2})$ & K\"ahler Einstein  & $\lambda_1(S) $ & $\lambda_2(S)$ & hyperbolic saddle \\ \hline 
$T=(\frac{1}{2}, \frac{\ell}{6 \ell-8})$ & K\"ahler Einstein  & $\lambda_1(T)$ & $\lambda_2(T)$ & hyperbolic saddle  \\ \hline 
\end{tabular} 

\bigskip

where (in decimal approximation)
\begin{align*}
\lambda_1(S)=\lambda_1(T)~=&~\frac{\ell}{(1.33333\, - \ell)^2} \left((0.0555556 \ell-0.111111) \ell \right. \\
& \left. -0.5 \sqrt{\ell (\ell ((0.308642 \ell-2.22222) \ell+5.97531)-7.11111)+3.16049}\right)
\end{align*}

\begin{align*}
\lambda_2(S)=\lambda_2(T)~=&~\frac{\ell}{(1.33333\, - \ell)^2} \left((0.0555556 \ell-0.111111) \ell \right. \\
& \left. +0.5 \sqrt{\ell (\ell ((0.308642 \ell-2.22222) \ell+5.97531)-7.11111)+3.16049}\right)
\end{align*}

\end{teorema}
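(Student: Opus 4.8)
The plan is to prove Theorem~\ref{teo-sing-so} by a direct analysis of the explicit polynomial planar system~\eqref{system-u-v-caso-so}, exploiting its factored form. Write $u=-x(2x-1)\,P_1$ and $v=-y(2y-1)\,P_2$ with $P_1(x,y)=\ell(x(8y-1)+8y^2-7y+1)-4y(2x+2y-1)$ and $P_2(x,y)=\ell(8x^2+x(8y-7)-y+1)-4x(2x+2y-1)$. First I would record the symmetry $v(x,y)=u(y,x)$, coming from the interchange of the two equidimensional summands $\mathfrak m_1,\mathfrak m_2$; by Lemma~\ref{lema-simetrias} it makes the diagonal $\{x=y\}$ invariant and it swaps $S$ with $T$, so one can do half of the work and transport the rest by the symmetry.

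For the location of the singularities I would run the nine-case analysis obtained by intersecting $\{x=0\}\cup\{x=\tfrac12\}\cup\{P_1=0\}$ with $\{y=0\}\cup\{y=\tfrac12\}\cup\{P_2=0\}$. Eight of the nine cases collapse to a single linear equation and produce exactly $O,P,Q,K,L,M,S,T$; for instance $x=\tfrac12$, $P_2=0$ forces $y=\tfrac{\ell}{6\ell-8}$, giving $S$. The only subtle case is $P_1=P_2=0$, where the factorization $P_1-P_2=(x-y)\bigl[(6\ell-4)-(8\ell-8)(x+y)\bigr]$ reduces matters to two subcases: on the line $x+y=\tfrac{3\ell-2}{4(\ell-1)}$ a one-line substitution gives $P_1\equiv\tfrac{\ell(\ell-2)}{4(\ell-1)}\neq 0$, so that subcase is empty; on the diagonal, $P_1(x,x)=(4x-1)\bigl((4\ell-4)x-\ell\bigr)$ yields precisely $R=(\tfrac14,\tfrac14)$ and $N=\bigl(\tfrac{\ell}{4(\ell-1)},\tfrac{\ell}{4(\ell-1)}\bigr)$. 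This shows the list of singularities is complete; by Proposition~\ref{prop:simplex-projection} the four interior ones $N,R,S,T$ are exactly the Einstein metrics and the six boundary ones are degenerate, and that $R,S,T$ are the K\"ahler-Einstein metrics while $N$ is the non-K\"ahler one follows either from~\cite{kimura} or from the fact that, since $\mathfrak m_3=[\mathfrak m_1,\mathfrak m_2]$, the K\"ahler-Einstein metrics are precisely the $(x,y,z)$ with one coefficient equal to the sum of the other two (which $R$, $S$, $T$ satisfy and $N$ does not).

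To classify each singularity I would differentiate~\eqref{system-u-v-caso-so} and evaluate. At the six boundary points one of the linear factors in $u$ or $v$ vanishes, which forces $DY$ to be triangular --- often a multiple of the identity, e.g. $DY|_K=-\tfrac\ell2\,\mathrm{Id}$, $DY|_L=(2-\ell)\,\mathrm{Id}$, $DY|_M=-\tfrac\ell2\,\mathrm{Id}$ --- so the eigenvalues are read off from the diagonal and, using $\ell\ge4$ to fix signs, give the repeller/attractor verdicts. At $N$ and $R$, which lie on the invariant diagonal, the symmetry forces $DY=\left(\begin{smallmatrix}p&q\\q&p\end{smallmatrix}\right)$, so the eigenvalues are $p\pm q$: for $R$ this is $\{-\tfrac12,\tfrac\ell4\}$, a hyperbolic saddle, and for $N$ it is the positive pair displayed, a repeller. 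For $S$ and $T$ the Jacobian is a genuine $2\times2$ matrix; I would compute $\operatorname{tr}DY$ and $\det DY$, obtain the eigenvalues from the quadratic formula --- these are the radical expressions $\lambda_1(S)=\lambda_1(T)$ and $\lambda_2(S)=\lambda_2(T)$ in the statement --- and conclude they are hyperbolic saddles by checking $\det DY<0$ there.

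The main obstacle is purely computational and concentrated at $S$ and $T$: carrying the parameter $\ell$ through the Jacobian there produces the unwieldy quartic-in-$\ell$ radicands, and one must verify that the sign pattern $\lambda_1<0<\lambda_2$ (equivalently $\det DY<0$) holds uniformly for every integer $\ell\ge4$; similarly the signs at $O,P,Q,K,L,M$ and the positivity of both eigenvalues at $N$ are elementary but must be checked against the constraint $\ell\ge4$. This is precisely the bookkeeping the authors carry out with computer algebra, and no conceptual difficulty beyond it is expected.
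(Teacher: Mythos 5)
Your proposal is correct and is, in substance, the same proof the paper gives: the paper's entire argument for Theorem~\ref{teo-sing-so} is the single sentence that the singularities and the eigenvalues of the Jacobian were computed (with computer algebra), and you are carrying out exactly that computation. What you add is the organization that makes it checkable by hand: the symmetry $v(x,y)=u(y,x)$ (so $DY$ at diagonal equilibria is of the form $\left(\begin{smallmatrix}p&q\\ q&p\end{smallmatrix}\right)$ with eigenvalues $p\pm q$, and $S$, $T$ are exchanged), the factorization $P_1-P_2=(x-y)\bigl[(6\ell-4)-(8\ell-8)(x+y)\bigr]$ that closes the only nontrivial case $P_1=P_2=0$, and the observation that $P_1$ is constant equal to $\tfrac{\ell(\ell-2)}{4(\ell-1)}$ on the line $x+y=\tfrac{3\ell-2}{4(\ell-1)}$ — all of which I verified. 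This structure buys more than readability: carried out at $P=(0,1)$ it shows the Jacobian is triangular with diagonal entries $P_1(0,1)=2(\ell-2)$ and $\ell$, so by the symmetry $P$ must have the same eigenvalue pair $\{\ell,\,2(\ell-2)\}$ as $Q$; the published table's entry ``$\ell$, $\ell$'' at $P$ is therefore a misprint (harmless, since both eigenvalues are positive for $\ell\ge 4$ and $P$ remains a repeller). The one place where your plan still leaves genuine work is the sign verification $\det DY<0$ at $S$ and $T$ uniformly in $\ell\ge4$, which you correctly flag as the residual bookkeeping.
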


\begin{remark}
The phase portrait of Type II $SO(2\ell)$-flags  is very similar to the one obtained for $SU(n)$-flags. See Figure \ref{sing-su-n}.
\end{remark}

\begin{exemplo}
Let us consider the flag manifold $SO(12)/U(1)\times U(5)$. In this case, we have the following projected Ricci flow
$$
\left\{
\begin{array}{lll}
x' &=& -x (2 x-1) \left(6 \left(x (8 y-1)+8 y^2-7 y+1\right)-4 y (2 x+2 y-1)\right) 	\\
y' &=&  -y (2 y-1) \left(6 \left(8 x^2+x (8 y-7)-y+1\right)-4 x (2 x+2 y-1)\right)
\end{array}
\right.
$$
%The dynamics of this system is described in Figure \ref{so12}.
\end{exemplo}

%%%%% bacia exemplo so(12)
%\begin{figure}[ht]
%\centering
%\begin{center}
%\includegraphics[scale=0.5]{bacia-SO12-U1U5.pdf}
%\end{center}
%\caption{\label{so12}
%Projected Ricci flow for $SO(12)/U(1)\times U(5)$.}
%\end{figure}

\subsubsection{Gromov-Hausdorff convergence}
Analogously to  $SU(m+n+p)/S(U(m)\times U(n) \times U(p))$, we have
\begin{lema}
Let $G/H=SO(2\ell)/U(1)\times U(\ell-1)$, $\ell \geq 4$ be a flag manifold and denote by $\mathfrak{g}$  the Lie algebra of $SO(2\ell)$. Consider the reductive decomposition $\mathfrak{g}=\mathfrak{k} \oplus \mathfrak{m}_{1}\oplus \mathfrak{m}_{2} \oplus \mathfrak{m}_{3}$. Then the limiting behavior of the projected Ricci flow is given by:
\begin{equation} \label{fibracao-so}
\begin{tabular}{c|c|c|c|c}
Region  &  Limit & $\mathfrak{m}_0$            &  $\mathfrak{h}$   & $G/H$\\
\hline\hline
$R_1, R_3,R_4$   &  K     & $\mathfrak{m}_{1}$            & $\mathfrak m_{1}\oplus \mathfrak{k}$  & $SO(2\ell)/U(\ell)$\\
\hline
$R_7,R_8,R_{10}$   &  M     & $\mathfrak{m}_{2}$            & $\mathfrak m_{2}\oplus \mathfrak{k}$  & $SO(2\ell)/U(\ell)$\\
\hline
$R_2,R_5,R_6,R_9$   &  L     & $\mathfrak{m}_{3}$            & $\mathfrak m_{3}\oplus \mathfrak{k}$  & $SO(2\ell)/(SO(2\ell-2)\times SO(2))$\\
\hline
$-R_3,-R_8$   &  O     & $\mathfrak{m}_{1}\oplus \mathfrak m_{2}$ & $\mathfrak g$  & point\\
\hline
$-R_1,-R_2$   &  P     & $\mathfrak{m}_{1}\oplus \mathfrak m_{3}$ & $\mathfrak g$  & point\\
\hline
$-R_9,-R_{10}$   &  Q     & $\mathfrak{m}_{2}\oplus \mathfrak m_{3}$ & $\mathfrak g$  & point\\
\hline
\end{tabular}
%
%\begin{array}{c|c|c}
%\mathfrak{h} & \mathfrak{p} & G/H \\ \hline \hline 
%\mathfrak{k} \oplus \mathfrak{m}_{1} & \mathfrak{m}_{2} \oplus \mathfrak{m}_{3} &  SO(2\ell)/U(\ell)  \\ \hline
%\mathfrak{k} \oplus \mathfrak{m}_{2} & \mathfrak{m}_{1} \oplus \mathfrak{m}_{3} &  SO(2\ell)/U(\ell)  \\ \hline
%\mathfrak{k} \oplus \mathfrak{m}_{3} & \mathfrak{m}_{1} \oplus \mathfrak{m}_{2} & SO(2\ell)/SO(2\ell-2)\times SO(2) \\ \hline
%\end{array}
\end{equation}
where $SO(2\ell)/U(\ell)$ is the space of orthogonal complex structure on $\mathbb{R}^{2\ell}$ and $SO(2\ell)/SO(2\ell-2)\times SO(2)$ is the Grassmannian of oriented real $2$-dimensional subspaces of $\mathbb{R}^{2\ell}$, both with normal metrics.
\end{lema}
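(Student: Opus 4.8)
The plan is to reproduce, almost verbatim, the argument used for $SU(m+n+p)/S(U(m)\times U(n)\times U(p))$, since the projected Ricci flow of $SO(2\ell)/U(1)\times U(\ell-1)$ has a topologically equivalent phase portrait (Theorem \ref{teo-sing-so} and Figure \ref{sing-su-n}) and the same bracket pattern \eqref{colchete-SO2L-U1-UL1} as \eqref{colchete-su}. By Corollary \ref{corol-rescaling}, orbits of the Ricci flow differ from orbits of the projected flow only by rescaling and positive time reparametrization, so the Gromov--Hausdorff behavior is governed by the $\omega$- and $\alpha$-limit equilibria on the simplex $\mathcal S$; by Theorem \ref{thm:collapse}, the actual limit space depends only on the limiting (degenerate) invariant bilinear form $g$, i.e.\ only on which of the six boundary equilibria $K,L,M$ (forward time) and $O,P,Q$ (backward time) is approached. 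After undoing the projection $P$ and the normalization $\overline{x}=1$, these correspond (up to scale) to the degenerate metrics $(0,\tfrac12,\tfrac12)$, $(\tfrac12,\tfrac12,0)$, $(\tfrac12,0,\tfrac12)$, $(0,0,1)$, $(0,1,0)$, $(1,0,0)$, exactly as in \eqref{table-degen1}.

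First I would read off $\m_0=\ker g$ from each degenerate metric: it is the direct sum of those $\m_i$ whose coefficient vanishes, which gives precisely the third column of \eqref{fibracao-so} ($\m_1$ at $K$, $\m_3$ at $L$, $\m_2$ at $M$, and $\m_1\oplus\m_2$, $\m_1\oplus\m_3$, $\m_2\oplus\m_3$ at $O,P,Q$). Then I would compute $\h=\k\oplus(\text{Lie algebra generated by }\m_0)$ using \eqref{colchete-SO2L-U1-UL1}. For a single summand, $[\m_i,\m_i]\subseteq\k$ and $[\k,\m_i]\subseteq\m_i$, so the subalgebra generated by $\m_i$ already lies in $\k\oplus\m_i$, whence $\h=\k\oplus\m_i$, which is itself a subalgebra. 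For a double sum, the off-diagonal relation $[\m_i,\m_j]=\m_k$ (with $\{i,j,k\}=\{1,2,3\}$) forces the generated subalgebra to be all of $\g$, so $\h=\g$ and $G/H$ is a point; this disposes of the last three rows.

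Next I would identify the three genuinely collapsed quotients. Since each $\m_i$ is a sum of root spaces and $\k=\t\oplus\sum_{\langle\T\rangle^+}\m_\alpha$, every $\h=\k\oplus\m_i$ is a regular subalgebra of $\mathfrak{so}(2\ell)$, hence the connected subgroup $H$ is closed and Theorem \ref{thm:collapse} applies. A dimension count together with the root data of $D_\ell$ from \cite{itoh} then shows that $\k\oplus\m_1$ and $\k\oplus\m_2$ are conjugate copies of $\mathfrak{u}(\ell)$ (indeed $K=U(1)\times U(\ell-1)\subset U(\ell)$, with $\dim\m_1=\dim\m_2=2(\ell-1)=\dim\mathbb{CP}^{\ell-1}$ and $\dim(\k\oplus\m_i)=\ell^2=\dim\mathfrak u(\ell)$), so $G/H=SO(2\ell)/U(\ell)$, the space of orthogonal complex structures on $\mathbb{R}^{2\ell}$; while $\k\oplus\m_3=\mathfrak{so}(2\ell-2)\oplus\mathfrak{so}(2)$ (here $\g/\h$ has dimension $d_1+d_2=4(\ell-1)$, matching $\dim SO(2\ell)/SO(2\ell-2)\times SO(2)$), so $G/H=SO(2\ell)/SO(2\ell-2)\times SO(2)$, the Grassmannian of oriented real $2$-planes in $\mathbb{R}^{2\ell}$. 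The symmetry exchanging $\m_1\leftrightarrow\m_2$, which commutes with the flow (cf.\ Lemma \ref{lema-simetrias}) and is visible in the equal eigenvalues of $K$ and $M$ in Theorem \ref{teo-sing-so}, explains why $K$ and $M$ yield the same limit.

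Finally, at each of the six equilibria all nonzero coefficients of $g=x_1B_1+x_2B_2+x_3B_3$ coincide, so $g|_{\n}$ is a multiple of $B|_{\n}$, hence $\Ad_G(H)$-invariant, and $g(\n,\m\cap\h)=0$; therefore the corollary to Theorem \ref{thm:collapse} (the case $g(\n,\m\cap\h)=0$ and $g|_{\n}$ $\Ad_G(H)$-invariant) applies and the Finsler limit $d_F$ is exactly the Riemannian distance of the normal metric on $G/H$. Assembling these computations yields \eqref{fibracao-so}. The only real work is the explicit identification of $\k\oplus\m_i$ with $\mathfrak{u}(\ell)$ and with $\mathfrak{so}(2\ell-2)\oplus\mathfrak{so}(2)$ inside $\mathfrak{so}(2\ell)$ --- a bookkeeping exercise with the block/root structure of the $D_\ell$ flag manifold --- since the dynamical input (which region flows to which vertex) comes from Theorem \ref{teo-sing-so} and the collapsing mechanism from Theorem \ref{thm:collapse}.
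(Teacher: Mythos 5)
Your proposal is correct and follows essentially the same route as the paper, which states this lemma with no separate proof beyond ``analogously to $SU(m+n+p)/S(U(m)\times U(n)\times U(p))$'': read off the degenerate limit metric at each vertex, compute $\m_0$ and $\h$ from the bracket table \eqref{colchete-SO2L-U1-UL1}, and invoke Theorem \ref{thm:collapse} together with the observation that the surviving coefficients coincide so the limit carries the normal metric. Your explicit dimension counts identifying $\k\oplus\m_1\cong\k\oplus\m_2\cong\mathfrak{u}(\ell)$ and $\k\oplus\m_3\cong\mathfrak{so}(2\ell-2)\oplus\mathfrak{so}(2)$ supply detail the paper leaves implicit, and they check out.
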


Since the projected Ricci flow of $SO(2\ell)/U(1)\times U(\ell-1)$, $\ell \geq 4$ and $SU(m+n+p)/S(U(m)\times U(n) \times U(p))$ are equivalent, we keep in mind Figure \ref{sing-su-n} in order to state our result about Gromov-Hausdorff convergence.

\begin{teorema}\label{teo-ricci-so-1}
Consider the flag manifold $\mathbb{F} = SO(2\ell)/U(1)\times U(\ell-1)$, $\ell \geq 4$. Then the limiting behavior of the projected Ricci flow  is given by Figure  \ref{sing-su-n}. In particular \
\begin{enumerate}
\item The Einstein-K\"ahler metrics ($R$, $S$ and $T$) are hyperbolic saddles,
\item The Einstein non-K\"ahler metric ($N$) is a repeller,
\item if the metric $g_0$ belongs to $R_1$, $R_3$ or $R_4$ then $\mathbb{F}_\infty= (SO(2\ell)/U(\ell), g_{\rm normal})$,
\item if the metric $g_0$ belongs to $R_2$, $R_5$, $R_6$ or $R_9$ then $\mathbb{F}_\infty= (SO(2\ell)/SO(2\ell-2)\times SO(2) , g_{\rm normal})$,
\item if the metric $g_0$ belongs to $R_7$, $R_8$ or $R_{10}$ then $\mathbb{F}_\infty= (SO(2\ell)/U(\ell), g_{\rm normal})$,
\item if the metric $g_0$ lies outside the triangle delimited by $L$, $T$, $K$, $R$, $M$ and the flow lines connecting them, then $\mathbb{F}_{-\infty}=$ point,
%\item if the metric $g_0$ belongs to $R_1,R_2,R_3,R_8,R_9$ or $R_{10}$ then $\mathbb{F}_{-\infty}=$ point,
\end{enumerate}
 where $\mathbb{F}_{\pm\infty} = \displaystyle\lim_{t\to \pm\infty} (\mathbb{F}, g_t)$, $g_t$ is the projected Ricci flow with initial condition $g_0$ and the convergence is in Gromov-Hausdorff sense.
\end{teorema}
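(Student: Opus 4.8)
The plan is to prove Theorem \ref{teo-ricci-so-1} by combining two largely independent inputs: the local spectral data at the equilibria, already recorded in Theorem \ref{teo-sing-so}, and the collapsing mechanism of Theorem \ref{thm:collapse}, fed through the Lie-algebraic table \eqref{fibracao-so}. Items (1) and (2) are immediate from Theorem \ref{teo-sing-so}: the eigenvalues at $R$, $S$, $T$ have opposite signs (hyperbolic saddles), while at $N$ both eigenvalues $\tfrac{(\ell-2)\ell}{2(\ell-1)^2}$ and $\tfrac{(\ell-2)^2\ell}{4(\ell-1)^2}$ are strictly positive for every $\ell\geq 4$ (repeller). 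Everything else is a matter of globalizing this local picture and then reading off the metric limits.

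First I would establish the global phase portrait asserted by Figure \ref{sing-su-n}. By Proposition \ref{prop:simplex-projection} the projected Ricci flow is the flow of a polynomial vector field $Y$ on the closed simplex $\mathrm{cl}({\cal S})\subset\R^2$, and by Proposition \ref{propos-estratific} (applied to the factored field $fR$) the edges of ${\cal S}$, hence its vertices $O,P,Q$, are invariant. One checks directly from \eqref{system-u-v-caso-so} that the three segments $KL$, $LM$, $MK$ are invariant, exactly as in the $SU$ case; together with the edges these cut $\mathrm{cl}({\cal S})$ into the regions $R_i$ of Figure \ref{sing-su-n}. By Corollary \ref{corol-rescaling} the projected flow carries a strict Lyapunov function on non-equilibrium orbits, so it has neither nonconstant periodic orbits nor homoclinic loops; since the phase space is compact and two-dimensional, Poincar\'e--Bendixson forces every orbit to converge, in forward and backward time, to one of the equilibria of Theorem \ref{teo-sing-so}. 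Matching each equilibrium to its type ($O,P,Q$ repellers, $K,L,M$ sinks, $N$ repeller, $R,S,T$ saddles) and tracking the six saddle separatrices — each saddle sitting on one of the invariant segments, which already supplies one branch of its stable or unstable manifold — pins down the basins of attraction. This is the same combinatorial picture as for $SU(m+n+p)/S(U(m)\times U(n)\times U(p))$ in Theorem \ref{teo-ricci-su-1}, whence the two phase portraits are topologically equivalent.

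Next I would deduce the Gromov--Hausdorff statements (3)--(6). Fix an invariant initial metric $g_0$ and let $g_t$ be the corresponding projected Ricci flow; by the previous step $Pg_t$ converges, as $t\to\pm\infty$, to one of $K,L,M,O,P,Q$, so the invariant bilinear forms $g_t$ on $\mathbb F=SO(2\ell)/(U(1)\times U(\ell-1))$ converge to the degenerate form attached to that vertex, namely $(0,\tfrac12,\tfrac12)$, $(\tfrac12,\tfrac12,0)$, $(\tfrac12,0,\tfrac12)$, $(0,0,1)$, $(0,1,0)$ or $(1,0,0)$ in the basis $B_1,B_2,B_3$. Theorem \ref{thm:collapse} then applies: with $\m_0=\ker g$ and $\h=\k\oplus(\text{Lie algebra generated by }\m_0)$, the limit is $(\mathbb F,d_{g_t})\to (G/H,d_F)$. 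Computing $\m_0$ and $\h$ from the brackets \eqref{colchete-SO2L-U1-UL1} yields table \eqref{fibracao-so}: for $K$ (resp.\ $M$) one has $\m_0=\m_1$ (resp.\ $\m_2$), and $[\m_1,\m_1]\subset\k$ gives $\h=\k\oplus\m_1$ and $G/H=SO(2\ell)/U(\ell)$; for $L$, $\m_0=\m_3$ gives $\h=\k\oplus\m_3$ and $G/H=SO(2\ell)/(SO(2\ell-2)\times SO(2))$; for $O,P,Q$, $\m_0$ is a sum $\m_i\oplus\m_j$, and since $[\m_i,\m_j]=\m_k$ the generated subalgebra is all of $\g$, so $H=G$ and $G/H$ is a point. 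In the first three cases the surviving form $g|_\n$ has all its weights equal, i.e.\ it is a multiple of $B|_\n$, hence $\Ad_G(H)$-invariant, and $g(\n,\m\cap\h)=0$; by the Corollary following Theorem \ref{thm:collapse} the Finsler norm $F$ is exactly the Riemannian normal metric, and the geometric identifications ($SO(2\ell)/U(\ell)$ as orthogonal complex structures on $\R^{2\ell}$, $SO(2\ell)/(SO(2\ell-2)\times SO(2))$ as oriented real $2$-planes) finish (3)--(5). For (6): if $g_0$ lies outside the region bounded by $L,T,K,R,M$ together with the flow lines joining them, its backward orbit exits through an edge of ${\cal S}$ and converges to one of the vertices $O,P,Q$, so $\mathbb F_{-\infty}$ is a point.

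The main obstacle is the global phase-portrait step. The Lyapunov function and Poincar\'e--Bendixson reduce convergence to the finitely many equilibria, but identifying precisely which saddle separatrix bounds which $R_i$ — hence the exact shape of the basins and of the exceptional backward-invariant region in (6) — requires tracing the stable and unstable manifolds of $R$, $S$, $T$ across $\mathrm{cl}({\cal S})$. Apart from the one branch supplied by the invariant segment through each saddle, this separatrix structure is confirmed by the symbolic and numerical computation with \emph{Mathematica} mentioned in the introduction, after which the matching with the $SU$ picture (Theorem \ref{teo-ricci-su-1}) is purely combinatorial. A minor point to check is that the boundary equilibria $K,L,M$, whose Jacobians from Theorem \ref{teo-sing-so} have a repeated eigenvalue ($-\tfrac{\ell}{2}$, $2-\ell$, $-\tfrac{\ell}{2}$ respectively), are genuine sinks: these eigenvalues are strictly negative for $\ell\geq 4$, so the linearizations are hyperbolic and asymptotically stable regardless of their Jordan structure.
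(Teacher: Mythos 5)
Your proposal is correct and follows essentially the same route as the paper: items (1)--(2) are read off from the eigenvalue table of Theorem \ref{teo-sing-so}, the global phase portrait is pinned down by the invariant segments, the absence of periodic orbits from Corollary \ref{corol-rescaling}, and the (acknowledged) symbolic/numerical tracing of separatrices, and items (3)--(6) follow by feeding the degenerate limit forms at $K,L,M,O,P,Q$ into Theorem \ref{thm:collapse} via the bracket relations \eqref{colchete-SO2L-U1-UL1}, exactly as in the paper's Lemma giving table \eqref{fibracao-so}. The only addition you make is the explicit Poincar\'e--Bendixson argument and the check that the repeated-eigenvalue sinks are genuinely hyperbolic, both of which are consistent with (and slightly more detailed than) what the paper records.
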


\subsection{$E_6/SO(8)\times U(1)\times U(1)$}
Let us consider the flag manifold $E_6/SO(8)\times U(1)\times U(1)$. The Lie algebra of $E_6$ decomposes into $\mathfrak{e}_6=\mathfrak{k}\oplus\mathfrak{m}$, where $\mathfrak{k}$ is the Lie algebra of the isotropy and $[\mathfrak{k},\mathfrak{m}]\subset \mathfrak{m}$ (reductive homogeneous space).

The flag manifolds $E_6/SO(8)\times U(1)\times U(1)$ have three isotropy summands, $\mathfrak{m}_1$,  $\mathfrak{m}_2$,  $\mathfrak{m}_3$, with $\dim\mathfrak{m}_i=16$, $i=1,2,3$, therefore the dimension of this flag manifold is 48.

By \cite{itoh}, the Lie bracket between the isotropy summands are given by
\begin{equation}\label{colchete-E6-SO8U1U1}
\begin{array}{lll}
[\mathfrak{m}_{1},\mathfrak{m}_{1}]\subset \mathfrak{k}, & [\mathfrak{m}_{2},\mathfrak{m}_{2}]\subset \mathfrak{k}, & [\mathfrak{m}_{3},\mathfrak{m}_{3}]\subset \mathfrak{k}, \\

[\mathfrak{m}_{1},\mathfrak{m}_{2}]=\mathfrak{m}_{3} , & [\mathfrak{m}_{1},\mathfrak{m}_{3}]=\mathfrak{m}_{2} , &
[\mathfrak{m}_{2},\mathfrak{m}_{3}]=\mathfrak{m}_{1}.
\end{array}
\end{equation}

The components of the Ricci operator of the invariant metric $g$ can be computed by the methods in \cite{anastassiou-chrysikos} and are given by
\begin{eqnarray*}
r_x&=&\frac{1}{12} \left(+\frac{x}{y z}-\frac{y}{x z}-\frac{z}{x y}\right)+\frac{1}{2 x} \\ \\
r_y&=& \frac{1}{12} \left(-\frac{x}{y z}+\frac{y}{x z}-\frac{z}{x y}\right)+\frac{1}{2 y} \\ \\ 
r_z&=& \frac{1}{12} \left(-\frac{x}{y z}-\frac{y}{x z}+\frac{z}{x y}\right)+\frac{1}{2 z}
%r_x&=&\frac{1}{12} \left(\frac{x}{y z}-\frac{z}{x y}-\frac{y}{x z}\right)+\frac{1}{2 x} \\ \\
%r_y&=& \frac{1}{12} \left(-\frac{x}{y z}-\frac{z}{x y}+\frac{y}{x z}\right)+\frac{1}{2 y} \\ \\ 
%r_z&=& \frac{1}{12} \left(-\frac{x}{y z}+\frac{z}{x y}-\frac{y}{x z}\right)+\frac{1}{2 z}
\end{eqnarray*}

Coincidentally the expressions of the Ricci tensor for $E_6/SO(8)\times U(1)\times U(1)$ are as in the flag $SU(3)/T^2$ (case $m=n=p=1$ in section \ref{sec-su-n}). The components of the Einstein metrics of these two spaces are the same (see \cite{kimura}). Consequentially, the dynamics of the projected Ricci flow $E_6/SO(8)\times U(1)\times U(1)$ is the same as the flag $SU(3)/T^2$. %Explicitly we have:

%\begin{equation} \label{system-u-v-casoE6SO8}
%\hspace{-2cm}
%\left\{
%\begin{array}{lll}
%u(x,y)&=&2 x \left(x^2 (2-12 y)-3 x \left(4 y^2-6 y+1\right)+6 y^2-6 y+1\right) \\
%v(x,y)&=&-2 y (2 y-1) \left(6 x^2+6 x (y-1)-y+1\right).
%\end{array}
%\right.
%\end{equation}

% The phase portrait of the vector field associated with the Ricci flow is sketched in the Figure \ref{bacia-E6SO8} 

%%%%%%% Exemplo E6
%\begin{figure}[h]
%
%COLOCAR OS NOMES DOS VÉRTICES
%
%\centering
%\begin{center}
%\includegraphics[scale=0.5]{bacia-SU3-T2.pdf}
%\end{center}
%\caption{\label{bacia-E6SO8}
%Projected Ricci flow for $E_6/SO(8)\times U(1)\times U(1)$.}
%\end{figure}

\subsubsection{Gromov-Hausdorff convergence}
Since the Gromov-Hausdorff limit just depends on the limiting bilinear form (Theorem \ref{thm:collapse}), following table \eqref{fibracao1} it is just left to observe that $(\lie e_6,\lie m_i\oplus \lie k)$ is the symmetric pair corresponding to  $E_6/(SO(10)\times U(1))$, the {\em Complexified Cayley projective plane}.  
Recall that $(\g,\h)$ is called a \textit{symmetric pair} (see \cite{helgason}) if there is a decomposition $\g=\m\oplus\h$ such that
 \begin{equation}\label{eq:symmetricspace-condition}
 [\h,\h]\subset \h,\quad [\h,\m]\subset \m,\quad [\m,\m]\subset \h.
 \end{equation}
Note that the first and second conditions account for $\h$ being a Lie subalgebra  and for $\g=\h\oplus
\m$ being a reductive decomposition.

%Since the projected Ricci flow of $E_6/SO(8)\times U(1)\times U(1)$ and $SU(m+n+p)/S(U(m)\times U(n) \times U(p))$ are equivalent, we keep in mind the basin of attraction in Figure \ref{sing-su-n} in order to state the our result about the behavior of $g_\infty$ in the Gromov-Hausdorff sense.

\begin{teorema}\label{teo-ricci-E6-1}
Consider the flag manifold $\mathbb{F} = E_6/(SO(8)\times U(1)\times U(1))$. Then the limiting behavior of the projected Ricci flow  is given by Figure \ref{sing-su-n}. In particular 
\begin{enumerate}
\item The Einstein K\"ahler metrics ($R$, $S$ and $T$) are hyperbolic saddle points,
\item The Einstein non-K\"ahler metric ($N$) is a repeller,
\item if the metric $g_0$ belongs to $R_1$, $R_3$ or $R_4$ then $\mathbb{F}_\infty= (E_6/SO(10)\times U(1), g_{\rm normal})$,
\item if the metric $g_0$ belongs to $R_2$, $R_5$, $R_6$ or $R_9$ then $\mathbb{F}_\infty= (E_6/SO(10)\times U(1) , g_{\rm normal})$,
\item if the metric $g_0$ belongs to $R_7$, $R_8$ or $R_{10}$ then $\mathbb{F}_\infty= (E_6/SO(10)\times U(1), g_{\rm normal})$,
%\item if the metric $g_0$ belongs to $R_1,~R_2,$ $R_3,~R_8,~R_9$ or $R_{10}$ then $\mathbb{F}_{-\infty} =$ point,
%\item if $g_0$ is not an Einstein metric and not in a flow line connecting two Einstein metrics, then $\mathbb{F}_{-\infty}=$ point,
\item if the metric $g_0$ lies outside the triangle delimited by $L$, $T$, $K$, $R$, $M$ and the flow lines connecting them, then $\mathbb{F}_{-\infty}=$ point,
\end{enumerate}
where $\mathbb{F}_{\pm\infty} = \displaystyle\lim_{t\to \pm\infty} (\mathbb{F}, g_t)$, $g_t$ is the projected Ricci flow with initial condition $g_0$ and the convergence is in Gromov-Hausdorff sense.
\end{teorema}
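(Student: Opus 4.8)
The plan is to bootstrap everything from the already-completed analysis of $SU(m+n+p)/S(U(m)\times U(n)\times U(p))$ in the special case $m=n=p=1$. First I would observe that the Ricci operator components $r_x,r_y,r_z$ displayed above for $E_6/SO(8)\times U(1)\times U(1)$ coincide verbatim with those of $SU(3)/T^2$; hence the auxiliary polynomial field $(F,G,H)$, the field $(A,B,C)$ obtained from them exactly as in Section \ref{sec-su-n}, and its projection are literally the system \eqref{system-u-v-caso-su} with $m=n=p=1$. Therefore the projected Ricci flow of $E_6/SO(8)\times U(1)\times U(1)$ is conjugate --- in fact equal --- to that of $SU(3)/T^2$, and Theorem \ref{teo-sing-su-n} specialized to $m=n=p=1$ yields the complete list of singularities together with their eigenvalues and local types: $N$ is a repeller and $R,S,T$ are hyperbolic saddles, giving items (1) and (2), while the global phase portrait (Figure \ref{sing-su-n}) and the invariant segments $KL$, $LM$, $MK$ carry over unchanged.

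For the Gromov--Hausdorff statements (items (3)--(6)) I would invoke Theorem \ref{thm:collapse}, which reduces the limit to the limiting degenerate bilinear form. Since the degenerate metrics reached at the vertices and edge midpoints $K,L,M,O,P,Q$ are exactly those of \eqref{table-degen1} (these do not depend on the dimensions $d_i$), the kernels $\mathfrak{m}_0$ are the ones tabulated in \eqref{fibracao1}. Using the bracket relations \eqref{colchete-E6-SO8U1U1}, which have the same shape as \eqref{colchete-su}, one gets: for a single summand, $[\mathfrak{m}_i,\mathfrak{m}_i]\subset\mathfrak{k}$, so $\mathfrak{h}=\mathfrak{k}\oplus\mathfrak{m}_i$ is a proper subalgebra; for a pair $\mathfrak{m}_i\oplus\mathfrak{m}_j$, $[\mathfrak{m}_i,\mathfrak{m}_j]=\mathfrak{m}_k$, so $\mathfrak{h}=\mathfrak{e}_6$ and $G/H$ is a point, which handles item (6). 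At $K,L,M$ the limiting metric is normal homogeneous (all coefficients equal), so the hypotheses of the Corollary to Theorem \ref{thm:collapse} hold and $\mathbb{F}_\infty=(G/H,g_{\mathrm{normal}})$.

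It remains to identify $G/H$ in the non-collapsing rows with $E_6/(SO(10)\times U(1))$. I would check directly that $(\mathfrak{e}_6,\mathfrak{m}_i\oplus\mathfrak{k})$ satisfies the symmetric-pair conditions \eqref{eq:symmetricspace-condition} with reductive complement $\mathfrak{m}=\mathfrak{m}_j\oplus\mathfrak{m}_k$: the subalgebra and reductivity conditions follow from the $\Ad_G(K)$-invariance of the summands together with $[\mathfrak{m}_i,\mathfrak{m}_i]\subset\mathfrak{k}$ and $[\mathfrak{k},\mathfrak{m}_i]\subset\mathfrak{m}_i$, and $[\mathfrak{m},\mathfrak{m}]\subset\mathfrak{h}$ follows from $[\mathfrak{m}_j,\mathfrak{m}_j],[\mathfrak{m}_k,\mathfrak{m}_k]\subset\mathfrak{k}$ and $[\mathfrak{m}_j,\mathfrak{m}_k]=\mathfrak{m}_i$. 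A dimension count, $\dim(\mathfrak{m}_j\oplus\mathfrak{m}_k)=32=78-45-1$, together with the classification of irreducible symmetric spaces of $E_6$, pins this down as the complexified Cayley projective plane $E_6/(SO(10)\times U(1))$; by the symmetry of \eqref{colchete-E6-SO8U1U1} the same space appears for every $i$, which is why rows (3), (4) and (5) all give $E_6/(SO(10)\times U(1))$.

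The hard part --- or rather the only part that is not a mechanical transcription of the $SU(3)/T^2$ analysis --- will be this last structural identification: verifying not merely that $(\mathfrak{e}_6,\mathfrak{m}_i\oplus\mathfrak{k})$ is a symmetric pair but that it is the specific one associated to $SO(10)\times U(1)$. The symmetric-pair axioms are immediate from the bracket table, so the genuine content is matching the $32$-dimensional quotient against the list of $E_6$-symmetric spaces; here the dimension count is decisive, since $E_6$ has a unique irreducible Hermitian symmetric space of that dimension.
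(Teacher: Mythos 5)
Your proposal is correct and follows essentially the same route as the paper: the paper likewise reduces the dynamics to the $SU(3)/T^2$ case ($m=n=p=1$ in Theorem \ref{teo-sing-su-n}) via the coincidence of the Ricci operator components, invokes Theorem \ref{thm:collapse} together with table \eqref{fibracao1} for the Gromov--Hausdorff limits, and identifies $(\mathfrak{e}_6,\mathfrak{m}_i\oplus\mathfrak{k})$ as the symmetric pair of $E_6/(SO(10)\times U(1))$. Your only addition is to spell out the symmetric-pair verification and the dimension count $78-46=32$ pinning down the complexified Cayley plane, which the paper leaves as an observation.
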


\subsubsection{Topological equivalence of the flows}\label{top}

%A modern approach to the problem of determining the topological equivalence of differential equations defined on a compact sets and with a finite number of singularities and without saddle connections, as the projected Ricci flow on flag manifolds with three isotropy summands, is the technique of Conley's isolating blocks (see for instance \cite{ketty1})
%and \cite{ketty2}).

As we have a complete description of the Ricci flow for flag manifold with three isotropy summands, we can use the Peixoto's Theorem (see \cite{peixoto})
%,peixoto2}) 
and construct the homeomorphism that give us the topological equivalence.

\begin{teorema} \label{topo-eqv1}
The dynamics of the projected Ricci flows 
of Type II flag manifolds
are topologically equivalent.
\end{teorema}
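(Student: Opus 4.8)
The plan is to prove that the projected Ricci flow $Y$ on the closed simplex $\mathrm{cl}({\cal S})$ attached to each Type II flag manifold is a Morse--Smale (hence structurally stable) vector field, and that the three families $SU(m+n+p)/S(U(m)\times U(n)\times U(p))$, $SO(2\ell)/U(1)\times U(\ell-1)$ and $E_6/SO(8)\times U(1)\times U(1)$ realize one and the same combinatorial phase portrait; topological equivalence then follows from Peixoto's classification theorem \cite{peixoto}.

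\emph{Step 1: the flow is Morse--Smale on $\mathrm{cl}({\cal S})$.} The vector field $Y = P\circ X\circ P^{-1}$ is polynomial, hence $C^\infty$, and by Proposition \ref{propos-estratific} the three edges and three vertices of $\mathrm{cl}({\cal S})$ are invariant, so $\mathrm{cl}({\cal S})$ is a compact invariant surface (with corners) on which it suffices to work. By Corollary \ref{corol-rescaling} there is a function strictly decreasing along nonconstant orbits, so there are no periodic orbits and, by compactness, the $\alpha$- and $\omega$-limit of every orbit is one of the finitely many equilibria. Those equilibria and their linearizations are exactly the ones in Theorems \ref{teo-sing-su-n} and \ref{teo-sing-so} (for $E_6/SO(8)\times U(1)\times U(1)$ one is in the case $m=n=p=1$ of the $SU$ family): the tables classify each of them as a repeller, an attractor or a hyperbolic saddle, and a direct inspection of the explicit eigenvalue formulas confirms that both eigenvalues have nonzero real part for every admissible parameter value --- in particular $N$ is a hyperbolic source, $R,S,T$ are hyperbolic saddles, $O,P,Q$ are hyperbolic sources and $K,L,M$ are hyperbolic sinks. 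Finally, a saddle connection is impossible: the two unstable separatrices of each of $R,S,T$ lie on an invariant inner segment ($R\in KM$, $S\in LM$, $T\in KL$, and these segments are invariant, as one checks exactly as for the $SU$ family) whose endpoints are the sinks $K,L,M$, so they terminate at sinks and never reach another saddle. Hence $Y$ has no periodic orbits, only hyperbolic equilibria, and no saddle connections: it is Morse--Smale and therefore structurally stable on $\mathrm{cl}({\cal S})$.

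\emph{Step 2: the combinatorial type is the same for all Type II families.} Arguing as in the Remark following Theorem \ref{teo-sing-su-n}, for every family the vertices $O,P,Q$ are sources, the edge-midpoints $K,L,M$ are sinks, the Einstein non-K\"ahler point $N$ is the unique interior equilibrium and is a source lying strictly inside the triangle $KLM$, and $R,S,T$ are saddles lying on the invariant segments $KM$, $LM$, $KL$ respectively. Each outer edge carries a pair of source-to-sink connections (e.g.\ $O\to K$ and $P\to K$ on $\{x=0\}$); each inner segment carries the two unstable separatrices of its saddle running to the two neighbouring sinks; and the six stable separatrices of $R,S,T$, together with these edge orbits, partition $\mathrm{cl}({\cal S})$ into the closures of the basins of $K$, $L$ and $M$, each a closed disk whose boundary is a fixed cyclic sequence of equilibria, separatrices and invariant edges. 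This cell decomposition and its incidence pattern are forced by the invariance of the three outer edges and three inner segments and by the recorded positions and types of the equilibria, hence are literally the same for all three families; equivalently, the Peixoto distinguished graphs of the three flows are isomorphic.

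\emph{Step 3: conclusion.} By Peixoto's classification of structurally stable flows on compact surfaces \cite{peixoto}, two Morse--Smale flows whose distinguished graphs are isomorphic are topologically equivalent; concretely one builds the conjugating homeomorphism cell by cell, sending equilibria to corresponding equilibria and separatrices to corresponding separatrices and extending over each basin-disk by a homeomorphism matching the (topologically standard) sink behaviour. This yields topological equivalence of the projected Ricci flows of all Type II flag manifolds. \textbf{Main obstacle.} The delicate point is Step 2: certifying that the separatrix and basin incidence pattern is genuinely identical across the three families and for \emph{all} admissible $(m,n,p)$ and $\ell\ge 4$ --- that no qualitatively different configuration appears for some special parameter value. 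This must be extracted from the explicit polynomial vector fields and the invariant-segment structure (as the quoted Remark and the invariance Proposition already do for the $SU$ family), rather than from soft dynamical arguments.
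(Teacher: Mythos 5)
Your proposal is correct and follows essentially the same route as the paper: both verify the hypotheses of Peixoto's theorem (the same number of singularities, all hyperbolic and of matching types, no nontrivial periodic orbits via Corollary \ref{corol-rescaling}, and no saddle connections) and then invoke \cite{peixoto} to construct the conjugating homeomorphism. You supply more detail than the paper's one-paragraph proof --- notably the observation that the unstable separatrices of $R$, $S$, $T$ run along the invariant segments $KM$, $LM$, $KL$ into the sinks, which is what rules out saddle connections --- but the underlying strategy is identical.
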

%\begin{proof} 
%From Corollary \ref{corol-rescaling} and the previous results of this section, all Type II projected Ricci flows have no nontrivial periodic orbits, have the same number of singularities, all of the same type, have no saddle connections, and the boundary of invariant regions are limited by trajectories.  From Theorem 1 of \cite{peixoto}, it follows that they are structurally stable. The result then follows from Theorem 2 of \cite{peixoto2}.
%\end{proof}
%\begin{proof} The proof of this theorem follows from Theorem 1 of \cite{peixoto} and the previous results of this section that describes the type of singularities of the projected Ricci flows in invariant regions. 
%All the systems have the same number of singularities, all of the same type, no saddle connections, no nontrivial periodic orbits (Corollary \ref{corol-rescaling}), and the boundary of invariant regions are limited by trajectories of the projected Ricci flow, so this results follows from Theorem 2 of \cite{peixoto2}.
%\end{proof}
\begin{proof}
From the previous results of this section, all Type II projected Ricci flows have the same number of singularities, all of the same type, have no saddle connections and the boundary of invariant regions are limited by trajectories.  They also have no nontrivial periodic orbits (Corollary \ref{corol-rescaling}).
Thus, the conditions for constructing the conjugacy homeomorphism of Theorem 1 in \cite{peixoto} are satisfied.
%and Theorem 2 in \cite{peixoto2} 
\end{proof}

%\section{Three isotropy summands: Type I}\label{sec:typeII}
\section{Flag manifolds of type I}\label{sec:typeII}

In this section we consider the family of flag manifolds of exceptional Lie groups listed in Table \ref{tab-exep}. According to \cite{kimura} each of these manifolds have 3 isotropy summands and 3 invariant Einstein metrics (one K\"ahler--Einstein and two  non-K\"ahler). Note that the family of flags considering in Section \ref{sec:typeI} has 4 invariant Einstein metric. 

We will provide an analysis of the global behavior of projected Ricci flow in a similar fashion as in Section \ref{sec-su-n}.  Again, we will denote an invariant metric $g$ by a triple of positive real numbers $(x,y,z) \in \R^3_+$. 

Let $G/K$ be a flag manifold in Table \ref{tab-exep} and consider the decomposition of the tangent space at the trivial coset $b = K$ into irreducible components, $\mathfrak{m}=\mathfrak{m}_1\oplus \mathfrak{m}_2\oplus \mathfrak{m}_3$. The dimension $d_i$ of each component $\mathfrak{m}_i$ was computed in \cite{kimura} and is also listed in Table \ref{tab-exep}. The brackets between the isotropy components satisfies (see \cite{itoh})
\begin{equation}
\label{colchete-exep}
\begin{array}{lll}
[\mathfrak{m}_1, \mathfrak{m}_1] \subset \mathfrak{k}\oplus \mathfrak{m}_2, &
[\mathfrak{m}_2, \mathfrak{m}_2] \subset \mathfrak{k}, &
[\mathfrak{m}_3, \mathfrak{m}_3] \subset \mathfrak{k}, \\

[\mathfrak{m}_1, \mathfrak{m}_2] \subset \mathfrak{m}_1\oplus \mathfrak{m}_3, &
[\mathfrak{m}_1, \mathfrak{m}_3] \subset \mathfrak{m}_2, &
[\mathfrak{m}_2, \mathfrak{m}_3] \subset \mathfrak{m}_1.
\end{array}
\end{equation}
%where each $\lie m_i$ on the right-hand-side is completely contained in the brackets.

Given an invariant metric $g$, it is determined by three positive real number $(x,y,z)$. The components of the Ricci operator for the invariant metric $g$ for the flag manifolds in  Table \ref{tab-exep} were computed in \cite{anastassiou-chrysikos}
\begin{eqnarray*}
r_x & = & \frac{y (-d_1 d_2-2 d_1 d_3+d_2 d_3)}{2 x^2 d_1 (d_1+4 d_2+9 d_3)}+\frac{d_3 (d_1+d_2) }{2 d_1 (d_1+4 d_2+9 d_3)}\left(\frac{x}{y z}-\frac{z}{x y}-\frac{y}{x z}\right)+\frac{1}{2 x}\\ \\
r_y &=& -\frac{(-d_1 d_2-2 d_1 d_3+d_2 d_3)}{4 d_2 (d_1+4 d_2+9 d_3)}\left(\frac{y}{x^2}-\frac{2}{y}\right) +\frac{d_3 (d_1+d_2) }{2 d_2 (d_1+4 d_2+9 d_3)}\left(-\frac{x}{y z}-\frac{z}{x y}+\frac{y}{x z}\right)+\frac{1}{2 y} \\ \\
r_z &=& \frac{(d_1+d_2) }{2 (d_1+4 d_2+9 d_3)}\left(-\frac{x}{y z}+\frac{z}{x y}-\frac{y}{x z}\right)+\frac{1}{2 z}
\end{eqnarray*}
together with the corresponding Ricci flow equation
$$
x' = -2xr_x \qquad y'= -2yr_y \qquad z'= -2zr_z
$$
For the projected Ricci flow, multiply the Ricci vector field by $x^2yz(d_1+4 d_2+9 d_3)d_1 d_2$. We get
\begin{eqnarray*}
F(x,y,z)&=& -4 d_2 x \left(d_1^2 x y z+d_1 d_2 y z (4 x-y)+d_1 d_3 \left(x^3-x \left(y^2-9 y z+z^2\right)-2 y^2 z\right) \right.\\
&&\left. +d_2 d_3 (x-z) \left(x^2+x z-y^2\right)\right)\\ \\
G(x,y,z)&=& -2 d_1 y \left(d_1 d_2 y^2 z-2 d_1 d_3 (x+z) \left(x^2+x z-y^2\right)+8 d_2^2 x^2 z \right. \\
&&\left. -d_2 d_3 \left(2 x^3-20 x^2 z-2 x y^2+2 x z^2+y^2 z\right)\right) \\ \\
H(x,y,z)&=& 4 d_1 d_2 x z \left(d_1 \left(x^2-x y+y^2-z^2\right)+d_2 \left(x^2-4 x y+y^2-z^2\right)-9 d_3 x y\right)
\end{eqnarray*}
Computing the vector field $(A,B,C)$ in Equation (\ref{campoABC}) yields
\begin{eqnarray*}
A(x,y,z) &=& x (4 d_2^2 d_3 (-1 + x) (x - z) (x^2 - y^2 + x z) +  d_1^2 (-4 d_3 y (x + z) (x^2 - y^2 + x z) \\
&& +  2 d_2 z (-2 x^3 + 4 x^2 y + y^3 - 2 x (y + y^2 - z^2))) -  2 d_1 d_2 (-2 d_2 z (-x^3 + 12 x^2 y + y^2 \\
&& + x (-4 y - 2 y^2 + z^2)) + d_3 (-2 x^4 + 2 x^3 (1 + y) + (-4 + y) y^2 z +  2 x^2 (y^2 - 28 y z + z^2)\\
&&  - 2 x (y^3 + y^2 (1 - 2 z) + z^2 - y z (9 + z))))) \\ \\
B(x,y,z) &=& y (4 d_2^2 d_3 x (x - z) (x^2 - y^2 + x z) +  d_1^2 (-4 d_3 (-1 + y) (x + z) (x^2 - y^2 + x z) \\
&& + 2 d_2 z (-2 x^3 + 4 x^2 y + (-1 + y) y^2 - 2 x (y^2 - z^2))) - 2 d_1 d_2 (2 d_2 x z (x^2 + x (4 - 12 y) \\
&& + 2 y^2 - z^2) + d_3 (-2 x^4 + 2 x^3 (-1 + y) + (-1 + y) y^2 z + 2 x^2 (y^2 - 28 y z + z (10 + z)) \\
&&- 2 x (y^3 + z^2 - y z^2 - y^2 (1 + 2 z)))))\\ \\
C(x,y,z) &=& z (4 d_2^2 d_3 x (x - z) (x^2 - y^2 + x z) +    d_1^2 (-4 d_3 y (x + z) (x^2 - y^2 + x z) \\
&& + d_2 (-4 x^3 (-1 + z) + 2 y^3 z + 4 x^2 y (-1 + 2 z) +  4 x (-1 + z) (-y^2 + z^2))) \\
&& - 2 d_1 d_2 (2 d_2 x (4 x y (1 - 3 z) + x^2 (-1 + z) - (-1 + z) z^2 +  y^2 (-1 + 2 z)) \\
&& + d_3 (-2 x^4 + 2 x^3 y + y^3 z +  2 x^2 (y^2 + y (9 - 28 z) + z^2) +  2 x y (-y^2 + 2 y z + z^2))))
\end{eqnarray*}
We then get the corresponding {\em projected Ricci flow}
\begin{equation}\label{proj-ricci-3-sum-exep}
\left\{
\begin{array}{lll}
u(x,y) &=& x (-4 d_2^2 d_3 (2 x^3 (-1 + y) - (-1 + y) y^2 + x^2 (3 - 4 y + 3 y^2) +   x (-1 + 2 y - 4 y^2 + y^3)) \\
&& - 2 d_1^2 (2 d_3 (-1 + y) y (x (-1 + y) + y^2) +  d_2 ((-1 + y) y^3 + x^3 (-4 + 8 y) \\
&&+ 2 x^2 (3 - 9 y + 4 y^2)+ x (-2 + 8 y - 6 y^2 + y^3))) +  2 d_1 d_2 (-2 d_2 ((-1 + y) y^2 \\
&& + 2 x^3 (-1 + 7 y) + x^2 (3 - 22 y + 13 y^2) - x (1 - 7 y + 4 y^2 + y^3)) \\
&&+  d_3 (x^3 (4 - 64 y) + x^2 (-6 + 86 y - 60 y^2) + y^2 (4 - 5 y + y^2) \\
&&+ x (2 - 24 y + 18 y^2 + 5 y^3)))) \\ \\
v(x,y) &=& y (-4 d_2^2 d_3 x (2 x^2 (-1 + y) + (-1 + y) y^2 + x (1 - 2 y + 3 y^2))  \\
&&- 2 d_1^2 (2 d_3 (-1 + y)^2 (x (-1 + y) + y^2) + d_2 ((-1 + y)^2 y^2 + x^3 (-4 + 8 y) \\
&&+ 2 x^2 (3 - 8 y + 4 y^2) + x (-2 + 6 y - 5 y^2 + y^3))) + 2 d_1 d_2 (2 d_2 x (1 + x^2 (6 - 14 y)\\
&& - 3 y + y^2 + y^3 + x (-7 + 22 y - 13 y^2)) + d_3 (x^3 (28 - 64 y) + (-1 + y)^2 y^2\\
&& +  x^2 (-26 + 88 y - 60 y^2) + x (2 - 6 y - y^2 + 5 y^3))))
\end{array}
\right.
\end{equation}
For the result below we computed its singularities and the corresponding eigenvalues $\lambda_1$, $\lambda_2$ of its Jacobian.

\begin{teorema}
Consider the flag manifolds of Type I and its corresponding projected Ricci flow equations (\ref{proj-ricci-3-sum-exep}). We have \info{alterado Atrator - Repulsor}
\begin{enumerate}
\item degenerate metrics: $O=(0,0)$, $P=(0,1)$, $Q=(1,0)$ are repellers and $L=(\frac{1}{2}, \frac{1}{2})$, $M=(\frac{1}{2},0)$ are attractors.
\item Einstein-K\"ahler metric: $N=(\frac{1}{6},\frac{1}{3})$ is an attractor.    
\item Einstein non-K\"ahler metrics $R$, $S$ are hyperbolic saddles, they depend on $d_1$, $d_2$ and $d_3$ and are given in the following table (in decimal approximation)
\end{enumerate}
\begin{tabular}{lllll}
Flag Manifold $G/K$  & & $R $ & & $ S $  \\ \hline \hline
$E_8/E_6 \times SU(2) \times U(1)$ & & $(0.46847,0.47077)$ & & $(0.28932,0.26453)$ \\ \hline
$E_8/SU(8) \times U(1)$ && $(0.33648, 0.24145)$ && $(0.39343,0.42039) $  \\ \hline
$E_7/SU(5)\times SU(3) \times U(1)$  && $(0.33218, 0.24367)$ && $(0.39938, 0.42346)$  \\ \hline
$E_7/SU(6)\times SU(2) \times U(1)$  && $(0.44544, 0.45244)$ && $(0.30245,0.25819)$ \\ \hline
$E_6/SU(3)\times SU(3) \times SU(2) \times U(1)$ && $(0.32220, 0.24866)$ && $(0.41388,0.43154)$  \\ \hline
$F_4/SU(3)\times SU(2) \times U(1) $ && $(0.34725,0.23562)$ && $(0.37927 ,0.41362)$  \\ \hline
$G_2/U(2)$ && $(0.21154,0.35427)$  && $(0.46117,0.08619)$
\end{tabular}

\end{teorema}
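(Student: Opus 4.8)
The plan is to mirror the computational scheme used for the Type II families in Theorems \ref{teo-sing-su-n} and \ref{teo-sing-so}: locate every singularity of the planar polynomial field $Y=(u,v)$ of \eqref{proj-ricci-3-sum-exep} in $\mathrm{cl}(\mathcal S)$, linearize $Y$ at each, and read off the local behaviour. Since $fR$ was obtained by multiplying the Ricci field by $x^2yz(d_1+4d_2+9d_3)d_1d_2$, each coordinate hyperplane is invariant for $fR$; hence Proposition \ref{propos-estratific} applies, the three edges $\{x=0\}$, $\{y=0\}$, $\{x+y=1\}$ of $\mathcal S$ and its three vertices are $Y$-invariant, and the singular set of $Y$ is the union of its interior zeros with the zeros of the three one-variable restrictions of $Y$ to the edges. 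I would treat the interior and the boundary separately.

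For the interior, Proposition \ref{prop:simplex-projection} identifies the zeros of $Y$ in the open simplex with the invariant Einstein metrics of $G/K$, of which there are exactly three up to scale for Type I by \cite{kimura}: one Einstein–K\"ahler and two non-K\"ahler. The Einstein–K\"ahler one is the metric whose isotropy components are in the ratio $1:2:3$ — the levels of the height-three gradation that defines the flag, cf.\ \cite{kimura} — so in the present normalization it is the point $N=(\tfrac16,\tfrac13)$; I would verify $Y(N)=0$ directly, compute the Jacobian $DY(N)$ (a rational function of $d_1,d_2,d_3$ that simplifies on using the relation $d_1=2d_2$ valid in every row of Table \ref{tab-exep}), and check that its two eigenvalues are negative, so $N$ is a hyperbolic attractor. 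The two non-K\"ahler equilibria $R,S$ have no workable closed form in $(d_1,d_2,d_3)$, so I would substitute, one at a time, the seven triples of dimensions from Table \ref{tab-exep}, solve $u=v=0$ numerically — confirming completeness of the solution list by a Gr\"obner-basis or resultant elimination — recover the tabulated values of $R$ and $S$, and evaluate $DY$ there to find that in each of the seven cases the eigenvalues are real and of opposite sign, i.e.\ $R$ and $S$ are hyperbolic saddles.

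For the boundary, the edge restrictions are completely explicit; for instance on $\{y=0\}$ one finds $u(x,0)=4d_2(d_1+d_2)(d_1+d_3)\,x^2(2x-1)(x-1)$, and analogously on the other two edges, so the boundary singularities are precisely the vertices $O,P,Q$ together with the midpoints $L=(\tfrac12,\tfrac12)$ and $M=(\tfrac12,0)$ — note there is no singularity in the relative interior of the edge $\{x=0\}$, which is why the Type I picture carries one boundary point fewer than the Type II one (Figure \ref{fig-introducao}). At $Q$, $L$ and $M$ the Jacobian is triangular with nonzero diagonal, and reading off the signs ($DY(Q)$ has eigenvalues $4d_2(d_1+d_2)(d_1+d_3)>0$ and $4d_3(d_1+d_2)^2>0$; $DY(M)$ has eigenvalues $-d_2(d_1+d_2)(d_1+d_3)<0$ and $2d_1(d_1d_3-d_2^2-2d_2d_3)$, which equals $-4d_2^3<0$ after using $d_1=2d_2$; $L$ is handled the same way along $\{x+y=1\}$) gives that $Q$ is a repeller and $L,M$ are attractors. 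At $O$ and $P$, by contrast, $DY$ vanishes — $Y$ vanishes to order at least two there — because $fR$ has a double zero at these vertices; to classify them I would either perform a quasi-homogeneous blow-up of $Y$ at $O$ and at $P$ or, more simply, observe that along interior orbits $\dot x=xB(x,y)$ with $B>0$ near these vertices, so $x$ is monotone increasing and no interior orbit has $O$ or $P$ as a forward limit, while the invariant edge joining them carries their (weak) attracting direction; either way $O$ and $P$ act as repellers for the flow on $\mathcal S$. This classifies all singularities; combined with the absence of nontrivial periodic orbits (Corollary \ref{corol-rescaling}), it also supplies the input for the Type I analogue of the topological-equivalence statement of Theorem \ref{topo-eqv1}.

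Two steps are the main obstacles. First, the non-K\"ahler equilibria $R,S$ are genuinely transcendental in the dimensions, so the hyperbolic-saddle property cannot be established uniformly and must be certified by a finite numerical check over the seven manifolds of Table \ref{tab-exep}; this is the only non-uniform part of the argument. Second, the vertices $O$ and $P$ are non-hyperbolic, so their stability requires a blow-up (or the monotonicity argument above) rather than a direct eigenvalue computation — this degeneracy, traceable to the asymmetric bracket relations \eqref{colchete-exep} in which $\mathfrak{m}_1$ plays a distinguished role, is precisely what makes Type I differ from the everywhere-hyperbolic Type II case. A minor but necessary point is to certify, via elimination theory, that the list of eight singularities is complete, so that none is overlooked.
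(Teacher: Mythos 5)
Your proposal is correct and follows essentially the same route as the paper, which gives no written argument for this theorem beyond the remark that the singularities and the eigenvalues of their Jacobians were computed (symbolically/numerically in \emph{Mathematica}); the explicit checks you supply are right, e.g.\ $u(x,0)=4d_2(d_1+d_2)(d_1+d_3)x^2(2x-1)(x-1)$, the eigenvalues $4d_2(d_1+d_2)(d_1+d_3)$ and $4d_3(d_1+d_2)^2$ at $Q$, the simplification $2d_1(d_1d_3-d_2^2-2d_2d_3)=-4d_2^3$ at $M$ using $d_1=2d_2$, the absence of a singularity at $(0,\tfrac12)$, and the identification $N=(\tfrac16,\tfrac13)$ with the $1\!:\!2\!:\!3$ K\"ahler--Einstein metric. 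The one point where you go beyond the paper is the observation that $DY$ vanishes identically at $O$ and $P$, so these vertices are non-hyperbolic and the paper's stated eigenvalue method cannot classify them; your blow-up (or invariant-region) treatment is the right fix, with the caveat that the edge $\{x=0\}$ in fact flows from $P$ into $O$, so ``repeller'' at $O$ must be understood as a source for the interior orbits only --- which is the sense actually used in the paper's basin table \eqref{table:GH2}.
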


\begin{exemplo}
Let us illustrate the dynamics of the projected Ricci flow for the manifold $E_8/SU(8) \times U(1)$. In this case we have the following system of ordinary differential equations
$$
\left\{
\begin{array}{rcl}
x'&=&-x \left(4 x^3 (55 y-12)+x^2 \left(210 y^2-370 y+72\right)+x \left(-100 y^2+135 y-24\right)+10 \left(y^2-1\right) y^2\right)\\
y'&=&-y \left(20 x^3 (11 y-5)+2 x^2 \left(105 y^2-178 y+59\right)-27 x \left(2 y^2-3 y+1\right)+10 (y-1)^2 y^2\right)
\end{array}
\right.
$$
whose phase portrait is given in Figure \ref{gen-ricci-flow-exep} as well as its basin of attraction, summarizing the above discussion \info[inline] {a figura foi alterada}. 
\end{exemplo}

%\begin{figure}[ht]
%\centering
%
%\begin{center}
%\includegraphics[scale=0.4]{bacia-E8-SU8.pdf}
%\caption{\label{bacia-E8-SU8}
%Projected Ricci flow for flag manifolds with three summands of Type I.}
%\end{center}
%\end{figure}

\subsection{Gromov-Hausdorff convergence}
We proceed to analyze the behavior of the projected Ricci flow near degenerate points, in a similar way as in Section \ref{sec-su-n-gromov}. 
According to the computations above, the global behavior of the  projected Ricci flow for the flag manifolds listed in Table \ref{tab-exep} is given by Figure \ref{gen-ricci-flow-exep}.  

\begin{figure}[ht]
\centering
\def\svgwidth{11cm}
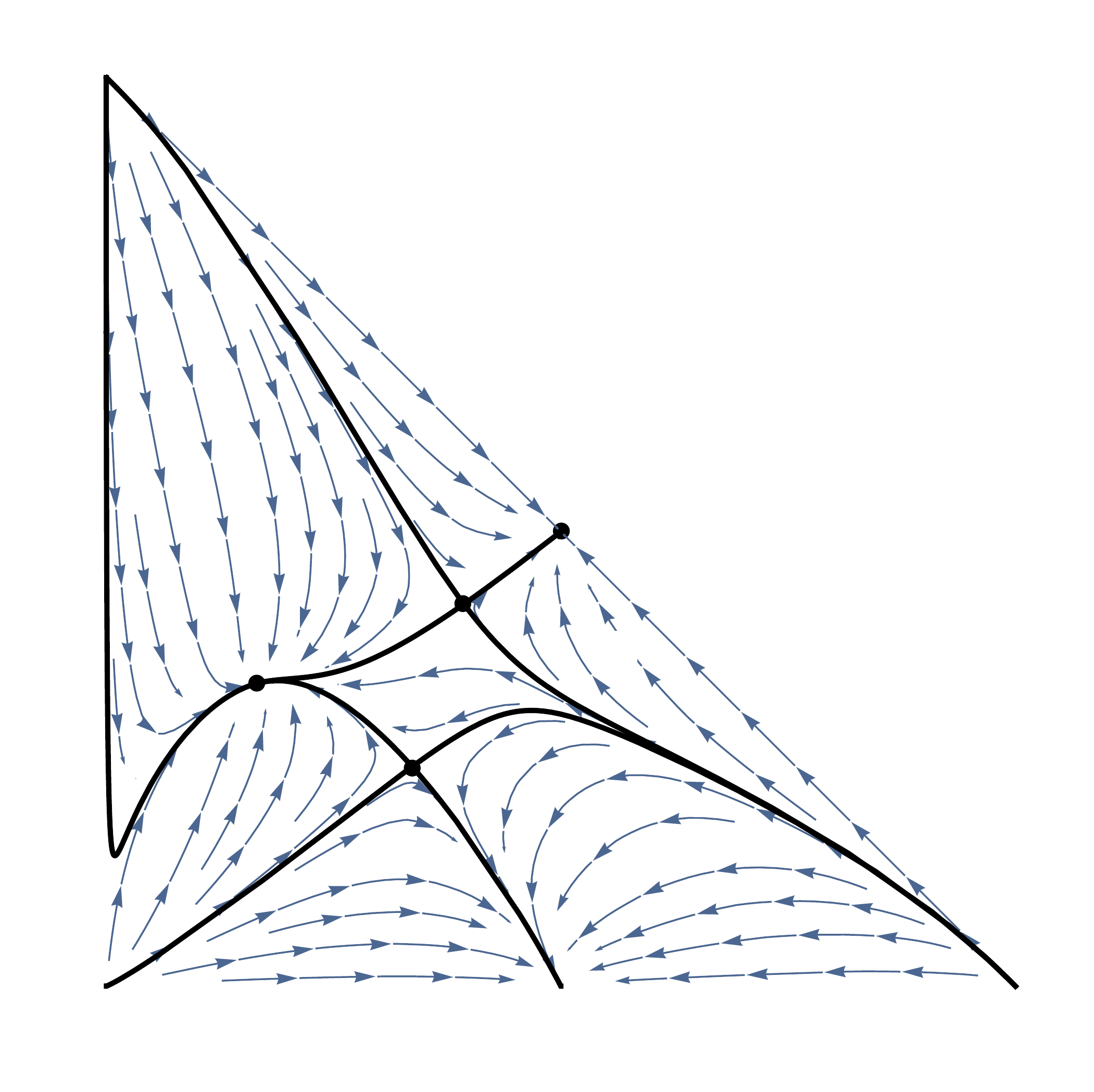
\caption{\label{gen-ricci-flow-exep}
Projected Ricci flow of Type I}
\end{figure}

Let $\mathbb{F}=G/K$ be a generalized flag manifold in Table \ref{tab-exep}. Considering the decomposition $\mathfrak{g}=\mathfrak{k}\oplus \mathfrak{m}_1\oplus\mathfrak{m}_2\oplus\mathfrak{m}_3$, an invariant metric $g$ is determined by the triple $(x,y,z)$ where $x$ correspond to the $\mathfrak{m}_1$-component,  $y$  to the $\mathfrak{m}_2$-component, and $z$ to the $\mathfrak{m}_3$-component.

As in Section \ref{sec-su-n}, the possible Gromov-Hausdorff limits of the flow are determined by the resulting degenerated points,  $M,~ L,~ O,~ P$ and $Q$

	\begin{equation*}\label{table-degen2}
	\begin{array}{c|c}
	\text{Singularity} & \text{Corresponding degenerate metric}\\ \hline\hline
M = (\frac{1}{2}, 0) & (\frac{1}{2}, 0, \frac{1}{2})\\ \hline
L = (\frac{1}{2},\frac{1}{2} ) & (\frac{1}{2}, \frac{1}{2}, 0)\\\hline 
O = (0, 0) & (0, 0, 1)\\ \hline
P = (0,1) & (0, 1, 0)\\ \hline
Q = (1,0 ) & (1, 0, 0)\\ \hline
\end{array}
\end{equation*}

\bigskip

The flow contrasts with the Type II  \info{alterado Backward - Forward} case both in geometric and dynamical aspects: dynamically, it ignores the point $(0,\frac{1}{2})$, geometrically, the
forward flow gives non-symmetric homogeneous spaces \comentario{Llohann}.

%as we shall see, the backward flow completely collapses, independent of the initial point, but the forward flow gives non-trivial homogeneous spaces.

We claim that the corresponding Gromov-Hausdorff limits are given by the following table \info{alterado Atrator - Repulsor}:
\begin{equation}\label{table:GH2}
	\begin{tabular}{c|c|c|c|c}
	Region  &  Limit & $\mathfrak{m}_0$            &  $\lie{h}$   & $G/H$\\
	\hline\hline
		-   &  -    & $\lie{m}_1$            & $\g$ & point\\
		\hline
		$R_6, R_7$   &  $M$     & $\lie{m}_2$            & $\lie m_2\oplus \lie{k}$  & Table \ref{table-syme}\\
		\hline
		$R_2, R_5$   &  $L$     & $\lie{m}_3$            & $\lie m_3\oplus \lie{k}$  & Table \ref{tab-BdS}\\
		\hline
		$- R_3, - R_6$   &  $O$     & $\lie{m}_1\oplus \lie m_2$ & $\lie g$  & point\\
	\hline
	$- R_1,- R_2$   &  $P$     & $\lie{m}_1\oplus \lie m_3$ & $\lie g$  & point\\
	\hline
	$- R_4,- R_5,- R_7$   & $Q$     & $\lie{m}_2\oplus \lie m_3$ & $\lie g$  & point\\
	\hline
\end{tabular}
\end{equation}
Since $G$ is semi-simple and $H$ is a subgroup, it follows that $\g=\h^\perp\oplus \h$ is a reductive decomposition of $\g$, where $\h^\perp$ is the $B$-orthogonal complement of $\h$ (note that $\h$ is $\ad_\g(\k)$-invariant, since $\k\subset \h$. Moreover,  $\m_1,\m_2,\m_3$ are pairwise non-isomorphic $\ad_\g(\k)$-representations. Therefore, $\ad_\g(\k)$-invariant subspaces must be the sum of  $\ad_\g(\k)$-irreducible components of $\g$). The class of reductive homogeneous spaces includes the symmetric spaces and the generalized flag manifolds we deal with. Here we restrict to reductive homogeneous spaces  where $G$ is compact simple and  $\rank (H)=\rank(G)$. The classification of such spaces is provided by Borel--de Siebenthal  \cite{borel-sieben} and is an important ingredient in our analysis.

\begin{teorema}[\cite{borel-sieben}]\label{thm:boreldesiebenthal}
 Let $G$ be a compact connected simple Lie group and let $H$ be a 
% maximal 
proper connected subgroup with $\rank(H) = \rank(G)$. Then $G/H$ is either an irreducible inner symmetric space or belongs to the following list:
\begin{equation*}
\begin{array}{lll}
G_2/SU(3), & F_4/(SU(3)\times SU(3)), & E_6/(SU(3)\times SU(3)\times SU(3)), \\
E_7/(SU(6)\times SU(3) ), & E_8/SU(9), & E_8/(E_6\times SU(3)),\\
E_8/(SU(5)\times SU(5)).
\end{array}
\end{equation*}
\end{teorema}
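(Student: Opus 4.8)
The statement is the classical Borel--de Siebenthal classification, cited here from \cite{borel-sieben}; since it is used below only with $H$ maximal (indeed, for non-maximal $H$ a maximal torus is already a counterexample), I will take $H$ to be a \emph{maximal} proper connected subgroup of maximal rank, i.e.\ $G/H$ a primitive space. The plan is to convert the group-theoretic question into the combinatorics of root systems, resolve that combinatorial question via the extended Dynkin diagram, and then read off the list type by type. First I would fix a maximal torus $T\subseteq H\subseteq G$, which exists because $\rank(H)=\rank(G)$, and complexify: then $\h_\C=\t_\C\oplus\bigoplus_{\alpha\in\Pi'}\g_\alpha$ for a subset $\Pi'\subseteq\Pi$ of the root system $\Pi$ of $\g$ that is \emph{symmetric} ($\Pi'=-\Pi'$) and \emph{closed} ($\alpha,\beta\in\Pi'$ and $\alpha+\beta\in\Pi$ imply $\alpha+\beta\in\Pi'$), and conversely every such $\Pi'$ yields a connected subgroup of maximal rank. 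Under this dictionary $H$ maximal corresponds to $\Pi'$ being a maximal proper closed symmetric subsystem of the irreducible root system $\Pi$, so the whole problem reduces to classifying those subsystems.

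The combinatorial heart is that, up to conjugacy, such an $\h$ arises in exactly one of two ways. Either $\h$ is the fixed-point subalgebra of an inner automorphism of \emph{prime} order $p$; here Kac's classification of finite-order inner automorphisms (encoded by nonnegative integer labels on the nodes of the affine Dynkin diagram whose weighted sum with the marks equals $p$) together with maximality forces a single label equal to $1$, so $\h$ is obtained by \emph{deleting one node of Dynkin mark $p$ from the extended diagram} $\widetilde\Delta$ of $\g$ (the affine node has mark $1$ and only returns $\g$). Or else $\h$ is the centralizer of a circle subgroup, obtained by deleting a single mark-$1$ node from the \emph{ordinary} diagram; then $\h$ has a one-dimensional center and $G/H$ is an irreducible Hermitian symmetric space. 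I would then isolate the geometric dichotomy: in the circle-centralizer case $G/H$ is inner symmetric (Hermitian), and in the first case the automorphism induces a $\mathbb Z/p$-grading $\g=\bigoplus_{j}\g_j$ with $\g_0=\h$, which is a symmetric decomposition precisely when $p=2$, whereas for $p\ge 3$ the bracket of two copies of $\g_1$ lands outside $\g_0$ and $G/H$ is not symmetric. Hence the non-symmetric quotients are exactly those produced by deleting a node of prime mark $p\ge 3$ from $\widetilde\Delta$.

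It then remains to run through the simple types. Every Dynkin mark of $A_\ell,B_\ell,C_\ell,D_\ell$ lies in $\{1,2\}$, so no non-symmetric examples occur for classical $G$. For the exceptional types I would list the nodes of prime mark $\ge 3$ in $\widetilde\Delta$: $G_2$ has one such node (mark $3$), $F_4$ one (mark $3$), $E_6$ one (mark $3$, the trivalent node), $E_7$ one up to the diagram automorphism of $\widetilde{E_7}$ (mark $3$), and $E_8$ two inequivalent nodes of mark $3$ together with one of mark $5$; deleting each and reading off the residual Dynkin diagram gives, respectively, $G_2/SU(3)$; $F_4/(SU(3)\times SU(3))$; $E_6/(SU(3)\times SU(3)\times SU(3))$; $E_7/(SU(6)\times SU(3))$; and $E_8/SU(9)$, $E_8/(E_6\times SU(3))$, $E_8/(SU(5)\times SU(5))$ --- exactly the seven spaces asserted.

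The main obstacle is the combinatorial structure result of the second paragraph: establishing that maximal proper closed symmetric subsystems are \emph{precisely} the prime-mark node deletions of the affine diagram (where the affine Weyl group and the Kac-coordinate bookkeeping genuinely enter), correctly separating the symmetric ($p\in\{1,2\}$) from the non-symmetric ($p\ge 3$) cases, and handling the diagram automorphisms so that the final enumeration is complete and free of repetitions.
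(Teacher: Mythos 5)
The paper does not prove this statement at all: it is quoted verbatim from \cite{borel-sieben} as a classical classification result, so there is no internal proof to compare against. Your outline is a correct sketch of the standard argument (which is essentially Borel--de Siebenthal's own, in its modern extended-Dynkin-diagram/Kac-coordinate packaging): reduce to closed symmetric root subsystems, identify the maximal ones with prime-mark node deletions of the affine diagram versus mark-$1$ circle centralizers, separate $p=2$ (inner symmetric) from $p\ge 3$ (non-symmetric), and enumerate; your type-by-type list of prime marks $\ge 3$ and the resulting quotients is accurate, including the identification of the two inequivalent mark-$3$ nodes of $E_8$ and the diagram automorphism of $\widetilde{E_7}$. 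Your preliminary observation is also well taken: as literally stated the theorem needs $H$ \emph{maximal} (a maximal torus is a counterexample otherwise), and this hypothesis is implicit in the paper; note, though, that the paper's actual use of the theorem (the dimension count in Lemma \ref{lemma-BdS}) only invokes the symmetric row of table \eqref{table:GHdim}, so the omission is harmless there. Two small points you should tighten if this were written out in full: (i) to conclude that $G/H$ is \emph{not} symmetric for $p\ge3$ you should add that $\h^\perp$ is the unique $\ad(\h)$-invariant complement, so $[\g_1,\g_1]\subseteq\g_2\not\subseteq\g_0$ really does rule out any symmetric structure; and (ii) the combinatorial core you flag as the ``main obstacle'' (that maximal closed symmetric subsystems are exactly the prime-mark deletions) is genuinely the whole theorem and is left unproved, which is acceptable only because the result is being cited rather than established.
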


\noindent For a list of irreducible inner symmetric spaces we refer to \cite{besse}. We conclude

\begin{lema}
\label{lemma-symm}\label{lemma-BdS}
Let $\mathbb{F}=G/K$ be a flag manifold of Type I. Then,
\begin{enumerate}
	\item  $(\lie g,\mathfrak{k}\oplus\mathfrak{m}_2)$  is a symmetric pair. The corresponding symmetric space is given in Table \ref{table-syme},
	\item  $(\lie g,\mathfrak{k}\oplus\mathfrak{m}_3)$  is a non-symmetric reductive pair associated with a subgroup $H<G$ with maximal rank. The corresponding reductive homogeneous space is given in Table \ref{tab-BdS}.
\end{enumerate}
\end{lema}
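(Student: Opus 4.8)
The plan is to deduce both assertions from the bracket relations \eqref{colchete-exep} together with the $\mathbb{Z}$-grading underlying a Type I flag. Recall that for a Type I flag $G/K=\F_\T$ one has $\Sigma\setminus\T=\{\alpha\}$ with $\mrk(\alpha)=3$; grading $\g_\C$ by the $\alpha$-height, $(\g_\C)_j=\bigoplus_{\operatorname{ht}_\alpha(\beta)=j}\g_\beta$, yields $\g_\C=\bigoplus_{j=-3}^{3}(\g_\C)_j$ with $(\g_\C)_0=\k_\C$ and $\m_i=\g\cap\big((\g_\C)_i\oplus(\g_\C)_{-i}\big)$ for $i=1,2,3$, and \eqref{colchete-exep} is the image in the compact real form of $[(\g_\C)_i,(\g_\C)_j]\subseteq(\g_\C)_{i+j}$. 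Two facts will be used repeatedly: since $\t\subseteq\k$, any subalgebra containing $\k$ has the same rank as $\g$; and $\m_1,\m_2,\m_3$ are pairwise inequivalent irreducible $\ad_\g(\k)$-modules, none occurring as a summand of $\k$.

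For (1), set $\h=\k\oplus\m_2$ and $\m=\m_1\oplus\m_3$. The three conditions in \eqref{eq:symmetricspace-condition} follow term by term from \eqref{colchete-exep}: $[\h,\h]\subseteq\h$ uses $[\k,\m_2]\subseteq\m_2$ and $[\m_2,\m_2]\subseteq\k$; $[\h,\m]\subseteq\m$ uses $[\m_2,\m_1]\subseteq\m_1\oplus\m_3$, $[\m_2,\m_3]\subseteq\m_1$ and the $\k$-invariance of $\m_1,\m_3$; and $[\m,\m]\subseteq\h$ uses $[\m_1,\m_1]\subseteq\k\oplus\m_2$, $[\m_1,\m_3]\subseteq\m_2$ and $[\m_3,\m_3]\subseteq\k$. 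Equivalently, $\h$ is the fixed subalgebra of the inner involution $\Ad(\exp(\pi\ii H_\alpha))$ of $\g$, which acts on $(\g_\C)_j$ by $(-1)^j$ ($H_\alpha$ dual to $\alpha$). Hence $(\g,\h)$ is an irreducible inner symmetric pair with $\rank\h=\rank\g$ and $\h\neq\g$ (as $\m_1,\m_3\neq0$). These are classified, and comparing $\rank$ and dimension ($\dim\h=\dim\k+d_2$, $\dim\g/\h=d_1+d_3$) — equivalently, reading off the even-$\alpha$-height root subsystem — identifies $G/H$ for each of the seven entries of Table \ref{tab-exep}, producing Table \ref{table-syme}.

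For (2), set $\h=\k\oplus\m_3$ with complement $\mathfrak{q}=\m_1\oplus\m_2$. From \eqref{colchete-exep}, $[\k,\m_3]\subseteq\m_3$ and $[\m_3,\m_3]\subseteq\k$ give $[\h,\h]\subseteq\h$, while $[\m_3,\m_1]\subseteq\m_2$, $[\m_3,\m_2]\subseteq\m_1$ and $\k$-invariance give $[\h,\mathfrak{q}]\subseteq\mathfrak{q}$; thus $(\g,\h)$ is reductive, with $\rank\h=\rank\g$ and $\h\neq\g$. Now $\h$ is the degree-zero part of the order-three inner automorphism $\Ad(\exp(\tfrac{2\pi\ii}{3}H_\alpha))$, acting on $(\g_\C)_j$ by $\zeta^{\,j}$ with $\zeta=e^{2\pi\ii/3}$; this is precisely the Borel--de Siebenthal subalgebra obtained by deleting the mark-$3$ node $\alpha$ from the extended Dynkin diagram of $\g$. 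By Theorem \ref{thm:boreldesiebenthal}, $G/H$ is either an inner symmetric space or one of the seven spaces listed there, and recording which node of which affine diagram is removed identifies it as one of the seven in every case, yielding Table \ref{tab-BdS}. One can also check directly that $(\g,\h)$ is \emph{not} symmetric: by the first paragraph, $\m_1\oplus\m_2$ is the only $\ad_\g(\k)$-invariant complement of $\h$ in $\g$, and $[\m_1,\m_2]\not\subseteq\h=\k\oplus\m_3$ because, by \eqref{colchete-exep}, the $\m_1$-component of $[\m_1,\m_2]$ is the real form of $[(\g_\C)_{-1},(\g_\C)_2]+[(\g_\C)_1,(\g_\C)_{-2}]$, which is nonzero (since $(\g_\C)_2=[(\g_\C)_1,(\g_\C)_1]$ and the Cartan has no zero weight on $(\g_\C)_1$).

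The verification of the bracket identities is routine; the actual content is the identification in the seven concrete cases, and this is where I expect the only real work to lie. Both identifications reduce to a finite inspection — $\k\oplus\m_3$ is ``delete the mark-$3$ node from the affine diagram of $\g$'', while $\k\oplus\m_2$ is the equal-rank symmetric subalgebra cut out by the even-$\alpha$-height root subsystem — which one carries out case by case against the standard tables of equal-rank symmetric spaces and of Borel--de Siebenthal subalgebras (see \cite{besse,helgason,borel-sieben}); this bookkeeping, rather than any structural difficulty, is the obstacle.
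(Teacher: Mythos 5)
Your proposal is correct, and for item (1) it coincides with the paper's proof (a direct verification of \eqref{eq:symmetricspace-condition} from \eqref{colchete-exep}, followed by a table lookup); your extra observation that $\k\oplus\m_2$ is the fixed algebra of the inner involution $\Ad(\exp(\pi\ii H_\alpha))$ is a nice conceptual gloss the paper does not make explicit. For item (2) your route genuinely diverges from the paper's. The paper, after noting that $\k\oplus\m_3$ is a subalgebra, rules out the symmetric alternative purely by \emph{dimension bookkeeping}: it tabulates all dimensions realized by equal-rank quotients $G/H$ of the relevant exceptional groups (symmetric and Borel--de Siebenthal) and checks that $d_1+d_2$ from Table \ref{tab-exep} only ever matches a non-symmetric entry, which simultaneously identifies $G/H$. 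You instead identify $\k\oplus\m_3$ \emph{structurally} as the fixed algebra of the order-three automorphism $\Ad(\exp(\tfrac{2\pi\ii}{3}H_\alpha))$, i.e.\ the Borel--de Siebenthal subalgebra obtained by deleting the mark-$3$ node from the extended Dynkin diagram, and you back this up with a direct bracket argument ($\m_1\oplus\m_2$ is the unique invariant complement and $[\m_1,\m_2]\not\subseteq\k\oplus\m_3$) that does not lean on the classification at all. Your approach explains \emph{why} these spaces are the Borel--de Siebenthal ones and is immune to any accidental coincidence of dimensions, at the cost of invoking more structure theory (the $\alpha$-height grading and the torsion-automorphism description); the paper's dimension count is more elementary but depends on the completeness of the dimension table. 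One small caveat: your parenthetical justification that the $\m_1$-component of $[\m_1,\m_2]$ is nonzero is terse --- the clean argument is that $[(\g_\C)_1,(\g_\C)_1]=(\g_\C)_2\neq 0$ forces, by invariance of the Killing form ($B([\m_1,\m_1],\m_2)=B(\m_1,[\m_1,\m_2])$), a nonzero $\m_1$-component in $[\m_1,\m_2]$ --- but this is easily repaired and in any case your classification argument already suffices.
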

\begin{proof}
Item $(1)$ follows from a direct computation using (\ref{colchete-exep}). For item $(2)$, first observe that $\k\oplus \m_3$ is a subalgebra, thus $\g=(\m_1\oplus\m_2)\oplus(\k\oplus\m_3)$ is a reductive decomposition. Now one can proceed with a case by case analysis and conclude that there is no symmetric space whose dimension coincides with the dimension of $G/H$. 
For instance, one can verify that the table below presents all the possible dimensions realized by the  homogeneous spaces $G/H$ appearing in Theorem \ref{thm:boreldesiebenthal}, where $G$ is explicit in the first line (see  \cite[p.~312--314]{besse}) and \cite{borel-sieben}):
\begin{equation}\label{table:GHdim}
\begin{tabular}{c|c|c|c|c|c}
&  $E_8$ & $E_7$            &  $E_6$   & $F_4$  & $G_2$\\
\hline\hline
%		-   &  -    & $\lie{m}_1$            & -  & -\\
%		\hline
Symemtric   &  112, 128     & 54, 64, 70            & 26,32,40,42  & 16, 28 & 8\\
\hline
Non-symmetric   &  200, 162, 168     & 90           & 54  & 36 & 6\\
\hline
\end{tabular}
\end{equation} 
Comparing these values with the values of $d_1+d_2$ in Table \ref{tab-exep}, one concludes that  $(\g,\k\oplus\m_3)$ must be non-symmetric. 
\end{proof}

\begin{table}[h!]
	\caption{\label{table-syme}}
		\begin{tabular}{lllll}
			Flag manifold $G/K$  & & Symmetric space $G/H$ & & $\dim G/H$  \\ \hline \hline
			$E_8/E_6 \times SU(2) \times U(1)$ & & $E_8/(E_7\times SU(2))$  & & 112 \\ \hline
			$E_8/SU(8) \times U(1)$ && $E_8/Spin(16) $ && 128  \\ \hline
			$E_7/SU(5)\times SU(3) \times U(1)$  && $E_7/SU(8)$ && 70  \\ \hline
			$E_7/SU(6)\times SU(2) \times U(1)$  && $E_7/( SO(12)\times SU(2))$ && 64 \\ \hline
			$E_6/SU(3)\times SU(3) \times SU(2) \times U(1)$ && $E_6/(SU(6)\times SU(2))$ && 40 \\ \hline
			$F_4/SU(3)\times SU(2) \times U(1) $ && $F_4/Sp(3)\times SU(2)$ && 28  \\ \hline
			$G_2/U(2)$ && $G_2/SO(4)$ && 8 
		\end{tabular}
\end{table}

\begin{table}[h!]
\caption{\label{tab-BdS}}
\begin{tabular}{lllll}
Flag manifold $G/K$  & & $G/H$ (Borel-de Siebenthal) & & $\dim G/H$  \\ \hline \hline
$E_8/E_6 \times SU(2) \times U(1)$ & & $E_8/(E_6\times SU(3))$ & & 162 \\ \hline
$E_8/SU(8) \times U(1)$ && $E_8/SU(9)$ && 168  \\ \hline
$E_7/SU(5)\times SU(3) \times U(1)$  && $E_7/(SU(6)\times SU(3) )$ && 90  \\ \hline
$E_7/SU(6)\times SU(2) \times U(1)$  && $E_7/(SU(6)\times SU(3) )$ && 90 \\ \hline
$E_6/SU(3)\times SU(3) \times SU(2) \times U(1)$ && $E_6/(SU(3)\times SU(3)\times SU(3))$&& 54 \\ \hline
$F_4/SU(3)\times SU(2) \times U(1) $ && $F_4/(SU(3)\times SU(3))$ && 36  \\ \hline
$G_2/U(2)$ && $G_2/SU(3)$ && 6
\end{tabular}
\end{table}

%We continue with the dimensional analysis.
%In each case $\m_0=\m_1\oplus \m_2,~\m_1\oplus\m_3$ and $\m_2\oplus \m_3$ there are only two possibilities: either $\h=\m_i\oplus\m_j\oplus\k$ or $\h=\g$. Let us show that the first possibility does not happen. If $\h=\m_i\oplus\m_j\oplus\k$, then $\dim G/H=d_k$, where $k\neq i,j$. Comparing $d_k$ with \eqref{table:GHdim}, one finds that the only possible value is $d_1=112$ for the space $E_8/SU(8)\times U(1)$.

%In order to continue with the computation of $\h$, we claim:
%\begin{lema}With the notation of \eqref{colchete-exep},
%	\begin{enumerate}[$(i)$]
%		\item $\g$ has no subalgebra of the form $\m_i\oplus\m_j\oplus\k$;
%		\item $[\m_2,\m_3]=\m_1$,
%		\item $[\m_1,\m_3]=\m_2$,
%		\item Either $[\m_1,\m_1]\supset \m_2$ or $[\m_1,\m_2]\supset \m_1$.
%	\end{enumerate}
%\end{lema}
%\begin{proof}
	For the remaining cases in \eqref{table:GH2}, we claim that neither $\m_1\oplus\m_2\oplus\k$, $\m_1\oplus\m_3\oplus\k$ nor $\m_2\oplus\m_3\oplus\k$  can be subalgebras. Start by supposing that $\h=\m_1\oplus\m_2\oplus\k$ is a subalgebra, so that $\g=\h\oplus\m_3$ is a reductive decomposition. It follows from \eqref{colchete-exep} that $[\m_1\oplus\m_2,\m_3]=0$, in particular $\m_1\oplus\m_2\oplus\k$ is an ideal, contradicting the simplicity of $G$. An immediate consequence is that $[\m_1,\m_2]\supset \m_3$. Using this last fact and analogous arguments, we conclude that $\m_1\oplus\m_3\oplus\k$ and $\m_2\oplus\m_3\oplus\k$ are not subalgebras as well. Therefore, in Table \eqref{table:GH2}, either $\h=\m_i\oplus\k$ or $\h=\g$.
%\end{proof}

For completeness sake, we also consider the case of $\m_0=\m_1$. According to the last paragraph, either $\m_1\oplus\k$ is a subalgebra or $\h=\g$. If one assumes the subalgebra $\h=\m_1\oplus\k$, then $\h\oplus(\m_2\oplus\m_3)$ must be reductive. In particular $[\m_1,\m_2]\subset \m_3$ and $[\m_1,\m_1]\subset \k$ (both inclusions follow from \eqref{colchete-exep}). Concluding that  $(\g,\m_3\oplus\k)$ is a symmetric pair, contradicting Lemma \ref{lemma-BdS}.

In most cases, the geometric interpretation follows along the same lines as in the case of $SU(m+n+p)/S(U(m)\times U(n)\times U(p))$: for $\m_0=\lie m_2$ or $\m_0=\lie m_3$, the collapse is given along the fibers of the submersion $H/K\cdots G/K\to G/K$. For $\lie m_0=\lie m_1\oplus \lie m_2$ or $\lie m_0=\lie m_1\oplus \lie m_3$, we again have a fibration $G/K\to G/L$ where $\lie l=\lie m_3\oplus \lie k$ or $\lie l=\lie m_2\oplus \lie k$, but the collapse happens in the base, not in the fibers. The collapse of the base forces the collapse of the entire space since horizontal curves (i.e., curves tangent to the distribution defined by $\lie m_0$, in this case) connect every pair of points in  $G/K$. 

More interesting cases are when $\lie m_0=\lie m_1$ and $\lie m_0=\lie m_2\oplus \lie m_3$ since neither $[\lie m_0,\lie m_0]\nsubseteq \lie m_0$ nor $[\lie m_0^\perp,\lie m_0^\perp]\nsubseteq \lie m_0^\perp$, which were the necessary conditions to construct the fibrations above. Thus, the last cases truly expresses the control-theoretic/sub-Riemannian aspect of the collapses which is made clear through Chow-Rascheviskii Theorem (see section \ref{sec:GH1} for details).

Summarizing:
\begin{teorema}
Let $\mathbb{F}=G/K$ be of Type I. Then the limiting behavior of the projected Ricci flow  is given by \info{alterado $+\infty$ e $-\infty$.} Figure  \ref{gen-ricci-flow-exep}. In particular
\begin{enumerate}
\item if $g_0\in R_6$ or $R_7$ then $\mathbb{F}_{\infty}$ is the corresponding symmetric space $G/H$ listed in Table \ref{table-syme}, equipped with the normal metric (up to scale),
\item if $g_0\in R_2$ or $R_5$ then $\mathbb{F}_{\infty}$ is the corresponding Borel-de Siebenthal homogeneous space $G/H$ listed in Table \ref{tab-BdS}, equipped with the normal metric (up to scale),
%\item if $g_0\in R_1$, $R_3$ or $R_4$ then $\mathbb{F}_{\infty}=G/K$ with the Einstein-K\"ahler metric,
%\item if the metric $g_0$ belongs to $R_1,~R_2,$ $R_3,~R_8,~R_9$ or $R_{10}$ then $\mathbb{F}_{-\infty} =$ point,
\item if the metric $g_0$ lies outside the the cusp made up by $L$, $S$, $N$, $M$ and the flow lines connecting them, then $\mathbb{F}_{-\infty}=$ point,
\end{enumerate}
 where $\mathbb{F}_{\pm\infty} = \displaystyle\lim_{t\to \pm\infty} (\mathbb{F}, g_t)$, $g_t$ is the projected Ricci flow with initial condition $g_0$ and the convergence is in Gromov-Hausdorff sense.
\end{teorema}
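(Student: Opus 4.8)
The proof assembles three ingredients already in place: the phase portrait of \eqref{proj-ricci-3-sum-exep} recorded in the preceding theorem and in Figure~\ref{gen-ricci-flow-exep}; the collapsing Theorem~\ref{thm:collapse} together with its corollary; and the Lie-theoretic identifications of Lemmas~\ref{lemma-symm} and~\ref{lemma-BdS}. The plan is first to read off, for each initial metric $g_0$, the forward and backward limit of the corresponding orbit in the closed simplex $\mathrm{cl}(\mathcal S)$; then to translate the resulting (degenerate) limiting bilinear form on $\m$ into the data $\m_0=\ker g$ and $\h=\k\oplus(\text{Lie algebra generated by }\m_0)$; and finally to invoke Theorem~\ref{thm:collapse}, recognizing $G/H$ and the limiting Finsler norm explicitly.

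For the dynamical step, recall that by Corollary~\ref{corol-rescaling} the projected Ricci flow has no nontrivial periodic orbits and that by Proposition~\ref{propos-estratific} the simplex $\mathcal S$ and all of its faces are invariant; hence Poincar\'e--Bendixson forces the $\omega$- and $\alpha$-limit set of every orbit to be a single equilibrium. From the singularity table above, $O,P,Q$ are repellers, $L,M$ and the K\"ahler--Einstein point $N$ are attractors, and $R,S$ are hyperbolic saddles; since there are no saddle connections, the separatrices of $R$ and $S$ emanate from the repellers in backward time and terminate at the attractors in forward time, thereby partitioning $\mathcal S$ into the regions $R_1,\dots,R_7$ of Figure~\ref{gen-ricci-flow-exep}. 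Inside each region the forward limit is the unique attractor in its closure and the backward limit the unique repeller, which produces the ``Region/Limit'' columns of \eqref{table:GH2}; combining this with the dictionary $M\!\leftrightarrow\!(\tfrac12,0,\tfrac12)$, $L\!\leftrightarrow\!(\tfrac12,\tfrac12,0)$, $O\!\leftrightarrow\!(0,0,1)$, $P\!\leftrightarrow\!(0,1,0)$, $Q\!\leftrightarrow\!(1,0,0)$ between boundary singularities and their degenerate metrics furnishes the column $\m_0$.

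For the geometric step one computes $\h$ from the bracket relations \eqref{colchete-exep}. When $\m_0=\m_2$ or $\m_0=\m_3$ the relevant summand brackets into $\k$, so $\h=\k\oplus\m_i$ is already a subalgebra; it contains the maximal torus and is a sum of $\t$ with root spaces, hence $H$ is closed and of maximal rank, and Lemma~\ref{lemma-symm} (resp.\ Lemma~\ref{lemma-BdS}) identifies $(\g,\h)$ with the symmetric pair of Table~\ref{table-syme} (resp.\ the Borel--de Siebenthal pair of Table~\ref{tab-BdS}). When $\m_0$ is a sum of two summands, the fact that none of $\m_1\oplus\m_2\oplus\k$, $\m_1\oplus\m_3\oplus\k$, $\m_2\oplus\m_3\oplus\k$ is a subalgebra (otherwise it would be a proper ideal, contradicting simplicity of $G$) forces the Lie algebra generated by $\m_0$, together with $\k$, to exhaust $\g$; thus $\h=\g$, $H=G$, and $G/H$ is a point. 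In the non-point cases, the surviving limiting form $g|_\n$ with $\n=\h^\perp$ is normal homogeneous (the coefficients in \eqref{eq-compon-metr} that persist are all equal) hence $\Ad_G(H)$-invariant, and $g(\n,\m\cap\h)=0$ since $\n$ and $\m\cap\h$ are spanned by disjoint collections of the mutually orthogonal summands $\m_i$; therefore the corollary to Theorem~\ref{thm:collapse} applies and the Finsler limit $F$ is the Riemannian norm of $g|_\n$, so $\mathbb F_{\pm\infty}=(G/H,g_{\mathrm{normal}})$ up to scale. Assertions (1) and (2) follow directly; for (3), an initial metric $g_0$ outside the cusp bounded by $L,S,N,M$ and the connecting orbits has backward orbit landing on one of $O,P,Q$, so $\mathbb F_{-\infty}$ is a point.

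The main obstacle is the case-by-case Lie-theoretic bookkeeping of the geometric step: certifying the entries of \eqref{table:GH2}, ruling out that any of the three ``horizontal'' complements $\m_i\oplus\m_j\oplus\k$ is a subalgebra (equivalently, that the distribution defined by the collapsing directions is genuinely bracket generating, which is exactly what makes the backward collapse reach a point), and comparing the dimensions $d_1+d_2$ in Table~\ref{tab-exep} with the classification lists in Theorem~\ref{thm:boreldesiebenthal} and \cite{besse} to confirm that $(\g,\k\oplus\m_3)$ is never symmetric. Once this is settled, the remainder is a direct application of the already-established Theorem~\ref{thm:collapse} and the Poincar\'e--Bendixson picture of Figure~\ref{gen-ricci-flow-exep}.
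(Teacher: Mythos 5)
Your proposal is correct and follows essentially the same route as the paper, which presents this theorem as a summary of the preceding discussion: the phase portrait and singularity classification of \eqref{proj-ricci-3-sum-exep}, Theorem \ref{thm:collapse} and its corollary applied to the limiting degenerate metrics of table \eqref{table:GH2}, the subalgebra analysis showing that $\h$ is either $\k\oplus\m_i$ or all of $\g$, and Lemma \ref{lemma-BdS} together with Theorem \ref{thm:boreldesiebenthal} identifying the quotients $G/H$. The only cosmetic difference is that you rule out all three sums $\m_i\oplus\m_j\oplus\k$ being subalgebras by a uniform reductivity-plus-ideal argument, whereas the paper treats $\m_1\oplus\m_2\oplus\k$ this way and deduces the remaining two cases from the resulting fact that $[\m_1,\m_2]\supset\m_3$.
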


Following along the same lines as in Theorem \ref{topo-eqv1},  we obtain: 
\begin{teorema}
The dynamics of the projected Ricci flows 
of Type I flag manifolds
are topologically equivalent.
\end{teorema}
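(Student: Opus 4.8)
The plan is to argue exactly as in the proof of Theorem \ref{topo-eqv1}, invoking Peixoto's structural stability theory for planar flows on a compact region with boundary (Theorem 1 of \cite{peixoto}) together with the companion fact that two structurally stable flows with the same oriented graph of singularities, separatrices and canonical regions are topologically equivalent. First I would record what the singularity computation preceding this statement already gives: every Type I projected Ricci flow \eqref{proj-ricci-3-sum-exep} on the closed simplex $\mathrm{cl}(\mathcal S)$ has the same finite list of equilibria of the same local type regardless of $(d_1,d_2,d_3)$ — the vertices $O,P,Q$ are hyperbolic sources, the edge points $L,M$ are hyperbolic sinks, the Einstein--K\"ahler point $N$ is a hyperbolic sink, and the two non-K\"ahler Einstein points $R,S$ are hyperbolic saddles; in particular there are finitely many equilibria and all of them are hyperbolic. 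Next, Corollary \ref{corol-rescaling} provides a function strictly decreasing along non-equilibrium orbits, which rules out nontrivial periodic orbits, and Proposition \ref{propos-estratific} shows each face of the simplex is invariant, so the boundary of the phase space is a union of trajectories; combined with the absence of periodic orbits this forces the nonwandering set to coincide with the equilibrium set.

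The one remaining hypothesis is the absence of saddle connections, i.e.\ that no stable separatrix of one saddle coincides with an unstable separatrix of a saddle. Here I would first locate the invariant one-dimensional sets joining singularities along $\partial\mathcal S$ (the edges of $\mathcal S$ and any interior invariant segments, analogous to the lines $x=\tfrac12$, $y=\tfrac12$, $x+y=\tfrac12$ used in the Type II analysis), noting that the projected Ricci flow is gradient-like by Corollary \ref{corol-rescaling}, so along each such invariant curve the dynamics is a simple gradient flow between the adjacent equilibria. The only place a saddle connection could arise is therefore along these finitely many special curves, and one checks — using the tabulated approximate positions of $R$ and $S$ inside the regions $R_i$ of Figure \ref{gen-ricci-flow-exep}, or a Poincar\'e--Bendixson argument in each region — that the four separatrices of $R$ and the four of $S$ limit on the sources $O,P,Q$ or the sinks $L,M,N$, never on the other saddle. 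Since the coordinates of $R$ and $S$ depend continuously on $(d_1,d_2,d_3)$ but always sit in the configuration drawn in Figure \ref{gen-ricci-flow-exep}, the separatrix structure and the partition into canonical regions are combinatorially identical for all seven manifolds of Table \ref{tab-exep}.

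Once each Type I projected Ricci flow is seen to satisfy the hypotheses of \cite[Theorem 1]{peixoto} and the associated oriented phase diagrams are identified as the same graph, the homeomorphism realizing the topological equivalence is assembled region by region precisely as in the proof of Theorem \ref{topo-eqv1}. I expect the main obstacle to be the saddle-connection verification: in contrast to Type II, the saddles $R$ and $S$ are not available in closed form, only numerically and differently for each manifold, so a genuinely uniform argument — ruling out saddle-to-saddle orbits a priori from the invariant curves and the gradient-like structure of Corollary \ref{corol-rescaling}, rather than performing seven separate numerical checks — is the delicate point, and it is what makes the single statement "all Type I flows are topologically equivalent" possible.
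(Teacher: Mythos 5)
Your proposal follows essentially the same route as the paper: the paper's proof is literally the one-line remark that the argument ``follows along the same lines as in Theorem \ref{topo-eqv1}'', i.e.\ verifying the hypotheses of Peixoto's Theorem 1 (same number and types of hyperbolic singularities, no saddle connections, boundary made of trajectories, no periodic orbits via Corollary \ref{corol-rescaling}). Your write-up is in fact more detailed than the paper's, particularly in isolating the saddle-connection verification for $R$ and $S$ as the delicate uniformity point, which the paper leaves implicit.
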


\bigskip

\subsubsection*{Data availability}  The {\em Mathematica}\textsuperscript{TM} software code used in the article is available upon request to the authors.

%\bigbreak\bigbreak
%\begin{flushright}
%	\noindent  Lino Grama, Ricardo M. Martins \\
%	\noindent  {\em University of Campinas, Campinas-SP, Brazil} 
%	\bigbreak
%	\noindent Mauro Patr\~{a}o, Lucas Seco \\
%	\noindent {\em University of Brasília, Brasília-DF, Brazil}
%	\bigbreak
%	\noindent Llohann Speran\c{c}a \\
%	\noindent {\em Universidade Federal de São Paulo,  São José dos Campos-SP, Brazil}
%\end{flushright}
%\bigbreak

\end{document}